\numberwithin{equation}{section}
\theoremstyle{plain}
\newtheorem{lemma}{Lemma}[section]
\newtheorem{theorem}[lemma]{Theorem}
\newtheorem{conjecture}[lemma]{Conjecture}
\theoremstyle{definition}
\newtheorem{definition}[lemma]{Definition}
\newtheorem{example}[lemma]{Example}
\newtheorem{remark}[lemma]{Remark}
\newenvironment{lemmabis}[1]
  {%
   \addtocounter{lemma}{-1}%
   \begin{lemma}}
  {\end{lemma}}
\newcommand{\C}{\mathbb{C}}
\newcommand{\N}{\mathbb{N}}
\newcommand{\bbP}{\mathbb{P}}
\newcommand{\Q}{\mathbb{Q}}
\newcommand{\R}{\mathbb{R}}
\newcommand{\bbS}{\mathbb{S}}
\newcommand{\Z}{\mathbb{Z}}
\newcommand{\cD}{\mathcal{D}}
\newcommand{\cE}{\mathcal{E}}
\newcommand{\cF}{\mathcal{F}}
\newcommand{\cI}{\mathcal{I}}
\newcommand{\cJ}{\mathcal{J}}
\newcommand{\cM}{\mathcal{M}}
\newcommand{\cN}{\mathcal{N}}
\newcommand{\cW}{\mathcal{W}}
\newcommand{\tI}{\mathrm{I}}
\newcommand{\tII}{\mathrm{II}}
\newcommand{\tO}{\mathrm{O}}
\newcommand{\fka}{\mathfrak{a}}
\newcommand{\fke}{\mathfrak{e}}
\newcommand{\fkA}{\mathfrak{A}}
\newcommand{\fkI}{\mathfrak{I}}
\newcommand{\fkJ}{\mathfrak{J}}
\newcommand{\fkN}{\mathfrak{N}}
\newcommand{\fkR}{\mathfrak{R}}
\newcommand{\fkX}{\mathfrak{X}}
\newcommand{\bdA}{\mathbf{A}}
\newcommand{\bdB}{\mathbf{B}}
\newcommand{\bdC}{\mathbf{C}}
\newcommand{\bdD}{\mathbf{D}}
\newcommand{\bdI}{\mathbf{I}}
\newcommand{\bdJ}{\mathbf{J}}
\newcommand{\bda}{\mathbf{a}}
\newcommand{\ba}{\mathbf{a}}
\newcommand{\bdb}{\mathbf{b}}
\newcommand{\bdi}{\mathbf{i}}
\newcommand{\bdj}{\mathbf{j}}
\newcommand{\bdk}{\mathbf{k}}
\newcommand{\bk}{\mathbf{k}}
\newcommand{\bdp}{\mathbf{p}}
\newcommand{\bds}{\mathbf{s}}
\newcommand{\bdt}{\mathbf{t}}
\newcommand{\bdu}{\mathbf{u}}
\newcommand{\bdw}{\mathbf{w}}
\newcommand{\bdx}{\mathbf{x}}
\newcommand{\bx}{\mathbf{x}}
\newcommand{\bdy}{\mathbf{y}}
\newcommand{\bdz}{\mathbf{z}}
\newcommand{\bdzero}{\mathbf{0}}
\newcommand{\bzero}{\mathbf{0}}
\newcommand{\bdalpha}{\boldsymbol{\alpha}}
\newcommand{\bdbeta}{\boldsymbol{\beta}}
\newcommand{\bddelta}{\boldsymbol{\delta}}
\newcommand{\bdel}{\boldsymbol{\delta}}
\newcommand{\bdzeta}{\boldsymbol{\zeta}}
\newcommand{\bdeta}{\boldsymbol{\eta}}
\newcommand{\bdtheta}{\boldsymbol{\theta}}
\newcommand{\ud}{\mathrm{d}}
\newcommand{\floor}[1]{\lfloor #1 \rfloor }
\newcommand{\ceil}[1]{\lceil #1 \rceil }
\newcommand{\supp}{\mathrm{supp}\,}
\def\delp{{ \bdel^{\times}}}
\def\lgQ*{{\lceil\frac{\log 4Q^*}{\log 2}\rceil}}
\begin{document}

\title[Rational points near manifolds]{Counting rational points near manifolds: a refined estimate, a conjecture and a variant}

\date{}

\author[J. Hickman]{ Jonathan Hickman }
\address{School of Mathematics and Maxwell Institute for Mathematical Sciences, James Clerk Maxwell Building, The King's Buildings, Peter Guthrie Tait Road, Edinburgh, EH9 3FD, UK.}
\email{jonathan.hickman@ed.ac.uk}

\author[R. Srivastava]{ Rajula Srivastava }
\address{Rajula Srivastava; 
\newline 
Department of Mathematics, 
University of Wisconsin 480 Lincoln Drive, 
Madison, WI, 53706, USA}
\email{%rajulas@math.uni-bonn.de
rsrivastava9@wisc.edu}

\author[J. Wright]{James Wright}
\address{James Wright: School of Mathematics and Maxwell Institute for Mathematical Sciences, James Clerk Maxwell Building, The King's Buildings, Peter Guthrie Tait Road, Edinburgh, EH9 3FD, UK.}
\email{j.r.wright@ed.ac.uk}

\begin{abstract} Refining an argument of the second author, we improve the known bounds for the number of rational points near a submanifold of $\R^d$ of intermediate dimension under a natural curvature condition. Furthermore, in the codimension $2$ case we formulate a conjecture concerning this count. The conjecture is motivated in part by interpreting certain codimension $2$ submanifolds of $\R^{2m+2}$ as complex hypersurfaces in $\C^{m+1}$ and using the complex structure to provide a natural reformulation of the curvature condition. Finally, we provide further evidence for the conjecture by proving a natural variant for $n \geq 2$ in which rationals are replaced with Gaussian rationals.
\end{abstract}

\maketitle

%%%%%%%%%%%%%%%%%%%%%%%%%%%%%%%%%%%%%%%%%%%%%%%%%%%%%%%%%%%%%%%%%%%%%%%%%%%%%%%%%%%%%%%%%%%%%%%%

%   Introduction

%%%%%%%%%%%%%%%%%%%%%%%%%%%%%%%%%%%%%%%%%%%%%%%%%%%%%%%%%%%%%%%%%%%%%%%%%%%%%%%%%%%%%%%%%%%%%%%%

\section{Introduction}

%%%%%%%%%%%%%%%%%%%%%%%%%%%%%%%%%%%%%%%%%%%%%%%%%%%%%%%%%%%%%%%%%%%%%%%%%%%%%%%%%%%%%%%%%%%%%%%%

%   The problem

%%%%%%%%%%%%%%%%%%%%%%%%%%%%%%%%%%%%%%%%%%%%%%%%%%%%%%%%%%%%%%%%%%%%%%%%%%%%%%%%%%%%%%%%%%%%%%%%

\subsection{The problem} 
For $n$, $R \in \N$ and $d := n + R$, let $\cM \subseteq \R^d$ be a bounded, immersed $n$-dimensional smooth submanifold of $\R^d$ with boundary. For $Q \in \N$ and $\delta \in (0, 1/2)$, we define the counting function 
\begin{equation*}
    N_{\cM}(Q, \delta) := \#\big\{ (\bdp, q) \in \Z^d : 1 \leq q \leq Q, \, \mathrm{dist}(\cM, \bdp/q) \leq \delta/q \big\},
\end{equation*}
where the distance function is given by
\begin{equation*}
  \mathrm{dist}(\cM, \bdp/q) := \inf_{\bdx \in \cM} |\bdx - \bdp/q|_{\infty}.   
\end{equation*}
An interesting avenue of research is to obtain upper and lower bounds for $N_{\cM}(Q, \delta)$, or indeed asymptotics, for $\cM$ belonging to a suitable class. This problem has attracted considerable interest in recent years and has various applications to dimension growth-type problems and diophantine approximation on manifolds: see, for instance, \cite{Huang2020, SY2022, Srivastava2025} and references therein. 

If $\cM \subseteq \R^d$ has dimension $n$ as above, then it is conjectured that, for all $\nu > 0$, 
%was conjectured by Huang~\cite{Huang2020} 
the bound
\footnote{Here, and throughout the rest of the article, given a list of objects $L$ and non-negative real numbers $A$, $B$, we write $A \ll_L B$ or $B \gg_L A$ or $A = O_L(B)$ if $A \leq C_L B$ where $C_L > 0$ is a constant depending only on the objects listed in $L$, and possibly a choice of dimension $n$ and underlying submanifold $\cM$. We write $A \sim_L B$ when both $A \ll_L B$ and $B \gg_L A$.}
\begin{equation}\label{eq: Huang conj}
    N_{\cM}(Q, \delta) \ll_{\nu} \delta^R Q^{n+1} + Q^{n + \nu}
\end{equation}
holds under `proper curvature conditions' on $\cM$. More precisely, the above bound would be a direct consequence of a conjecture of J.J. Huang (see \cite[Conjecture 3.1]{huang2024extremal}) for nondegenerate manifolds (that is, manifolds not contained in any proper affine subspace of $\R^d$). Simple examples show that some curvature hypothesis is necessary: for instance, the bound \eqref{eq: Huang conj} fails for a compact piece of a rational $n$-plane. Determining sufficient geometric conditions under which \eqref{eq: Huang conj} holds for a curved manifold $\cM$ for general $n$ and $R$ is an interesting and largely open problem. For the hypersurface case, where $R = 1$, under the natural condition that $\cM$ has nonvanishing Gaussian curvature, the works of Huxley~\cite{Huxley1994} and Vaughan--Velani~\cite{VV2006}, and then J.J. Huang~\cite{Huang2020} established that
\eqref{eq: Huang conj} indeed holds in dimensions two and greater than two respectively. More recently, Srivastava--Technau \cite{st2023} and M. Chen \cite{chen2025rational} have established sharp estimates for rational points near homogeneous hypersurfaces and planar curves of finite type respectively, when the Gaussian curvature vanishes at isolated points. 
\medskip

In \cite{Huang2020}, J.J. Huang used an inductive argument relying on a combination of Poisson summation, projective duality and stationary phase to establish the sharp rational point count near hypersufaces with non-vanishing Gaussian curvature. This Fourier analytic approach was developed further by Schindler--Yamagishi \cite{SY2022} and later the second author \cite{Srivastava2025} to treat certain manifolds of codimension greater than one. For such intermediate dimensional manifolds, Schindler--Yamagishi \cite{SY2022} introduced the following curvature condition.

\begin{definition}\label{dfn: CC} Given an open set $U \subseteq \R^n$ and a smooth function  $f \colon U \to \R^R$, we say $f = (f_1, \ldots, f_R)$ satisfies the \textit{curvature condition} if
\begin{equation}
\label{eq: CC}
 \det\Big(\sum_{\ell=1}^R \theta_{\ell}  f_{\ell}''(\bdx)\Big) \neq 0 \qquad \textrm{for all $\bdx \in U$, $(\theta_1, \dots, \theta_R) \in \bbS^{R-1}$.} \tag{\textrm{CC}}   
\end{equation}
Here $g''(\bdx)$ denotes the second derivative of a smooth map $g \colon \R^n \to \R$ at $\bdx \in U$, viewed as an $n \times n$ matrix of mixed partials. 
\end{definition}

We shall say an $n$-dimensional submanifold $\cM$ of $\R^{n + R}$ satisfies the \textit{curvature condition} (CC) if $\cM$ is given by a graph
\begin{equation}\label{eq: real graph}
  \{(\bdx, f(\bdx)) : \bdx \in X \} \subset \R^{n + R}  
\end{equation}
for an open, pre-compact set $X \Subset U$ and a smooth function $f \colon U \to \R^R$ satisfying \eqref{eq: CC}. This definition is slightly unsatisfactory, since it relies on working with a particular choice of parametrisation for $\cM$, but we shall present an intrinsic formulation in \S\ref{subsec: implicit CC}. 
In the hypersurface case, where $R = 1$, the manifold is parametrised by a single graphing function $f \colon U \to \R$ and \eqref{eq: CC} reduces to the condition that the Hessian $\det  f''(\bdx)$ is nonvanishing for all $\bdx \in U$. This is precisely the statement that $\cM$ has nonvanishing Gaussian curvature, expressed in graph coordinates.\medskip

Condition (CC) turns out to be very strong and rigid. In particular, there are relatively few dimension and codimension pairs $(n, R)$ for which submanifolds $\cM$ satisfying (CC) actually exist: see, for instance, \cite[Remark 1.14]{Srivastava2025} for further details. The condition \eqref{eq: CC} also features extensively in the harmonic analysis literature and is closely related to the \textit{rotational curvature condition} of Phong--Stein: see \cite{Gressman2015, rss2022lebesgue} and references therein.

\begin{definition}  We say a pair of dimensions $(n, R) \in \N$ is \textit{(CC)-admissible} if there exists a codimension $R$ submanifold of $\R^{n+R}$ satisfying (CC). For such a pair $(n,R)$, we define $\fke(n,R)$ to be the supremum over all exponents $e \in \R$ for which, given any codimension $R$ submanifold $\cM$ of $\R^{n+R}$ satisfying (CC), the bound
\begin{equation*}
    N_{\cM}(Q, \delta) \ll_e \delta^R Q^{n+1} + Q^{n + 1 - e}
\end{equation*}
holds for all $Q \in \N$ and $\delta \in (0,1/4)$.    
\end{definition}

A trivial count shows that $\fke(n,R) \geq 0$, whilst Huang's conjecture \eqref{eq: Huang conj} corresponds to $\fke(n,R) \geq 1$. We summarise various results bounding $\fke(n,R)$ in Figure~\ref{fig: progress}.\medskip

\begin{figure}
    \centering
     \begin{tabular}{l|c|c}
      Result   & $\fke(n,R) \geq$ & (CC)-admissible $(n, R) \in \N^2$   \\
      \Xhline{2\arrayrulewidth}
     Trivial  & $0$ & All  \\
     \hline
     Huxley~\cite{Huxley1994}, Vaughan--Velani~\cite{VV2006}  & $1$ & $(1, 1)$ \\
     \hline
     Huang~\cite{Huang2020}  & $1$ & $(n, 1)$ for $n \geq 1$ \\
     \hline
     Schindler--Yamagishi \cite{SY2022}  & $ \frac{nR}{n + 2(R-1)}$ & All  \\[2pt]
     \hline
     Srivastava \cite{Srivastava2025} & $ \frac{(n+2)R}{n + 2R}$ & $(n, 1)$ or $(n,2)$ for $n \geq 1$ \\ 
     \hline
     Srivastava \cite{Srivastava2025} & $ \frac{n^2R}{n^2 + 2(R-1)n - 4}$ & $(n, R)$ for $n \geq 1$, $R \geq 3$.
    \end{tabular}
    \caption{Progress towards the rational point counting problem for neighbourhoods of submanifolds satisfying (CC).}
    \label{fig: progress}
\end{figure}

In particular, observe that for $n \geq 2$ and $R \geq 2$, the exponent $\frac{nR}{n + 2(R-1)}$ of Schindler--Yamagishi~\cite{SY2022}, featured in Figure~\ref{fig: progress}, is larger (and therefore better) than the exponent $1$ posited by Huang's conjecture \eqref{eq: Huang conj}. Moreover, exponents $\frac{(n+2)R}{n + 2R}$ and $\frac{n^2R}{n^2 + 2(R-1)n - 4}$ from Srivastava \cite{Srivastava2025} are larger still. Thus, under the curvature condition (CC), significantly stronger bounds hold than those conjectured in \eqref{eq: Huang conj}.

%%%%%%%%%%%%%%%%%%%%%%%%%%%%%%%%%%%%%%%%%%%%%%%%%%%%%%%%%%%%%%%%%%%%%%%%%%%%%%%%%%%%%%%%%%%%%%%%

%   A refined estimate

%%%%%%%%%%%%%%%%%%%%%%%%%%%%%%%%%%%%%%%%%%%%%%%%%%%%%%%%%%%%%%%%%%%%%%%%%%%%%%%%%%%%%%%%%%%%%%%%

\subsection{A refined estimate} Our first result is a refinement of the rational point count from \cite{Srivastava2025}. In particular, we extend the numerology of \cite{Srivastava2025} for codimension $1$ and $2$ submanifolds (see Figure~\ref{fig: progress}) to submanifolds of higher codimension. 

\begin{theorem}\label{thm: refined count} For all (CC)-admissible $(n, R) \in \N^2$, we have $\fke(n, R) \geq \frac{(n+2)R}{n + 2R}$.     
\end{theorem}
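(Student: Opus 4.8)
\emph{Strategy.} The plan is to run the Fourier-analytic machinery of Huang~\cite{Huang2020} --- subsequently developed by Schindler--Yamagishi~\cite{SY2022} and the second author~\cite{Srivastava2025} --- extracting more from the ``dual'' sum that appears after two Poisson summations and an application of stationary phase. By a smooth partition of unity and compactness one may reduce to the case that $\cM$ is the graph \eqref{eq: real graph} of a function $f\colon U\to\R^R$ over a small ball, with $f$ and its derivatives bounded and $|\det(\sum_{\ell}\theta_\ell f_\ell''(\bdx))|$ bounded below uniformly on the support, so that stationary phase applies with uniform constants. Decomposing the denominator dyadically, $q\sim Q_0$ for $Q_0\le Q$ a power of $2$, it suffices to bound the contribution of each scale by $\delta^R Q_0^{n+1}+Q_0^{n+1-e}$ and to sum the resulting geometric series; logarithmic losses from further dyadic decompositions are absorbed into a small decrease of $e$, and the edge regime of very small $\delta$ is handled by a direct argument.

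\emph{Two Poisson summations and stationary phase.} Fix a Schwartz majorant $\psi\ge\mathbbm{1}_{[-1,1]^R}$ with $\widehat\psi\ge 0$ supported in the unit ball, set $\psi_\delta:=\psi(\cdot/\delta)$, and majorise the indicator of $X$ by a smooth bump $\chi$; the latter step produces lower-order ``boundary'' contributions, treated by the same argument or by induction on $n$. Poisson summation in the fibre variable $\bdp''\in\Z^R$ gives
\[
\sum_{\bdp''\in\Z^R}\psi_\delta\big(\bdp''-qf(\bdp'/q)\big)=\delta^R\sum_{\bdk\in\Z^R}\widehat\psi(\delta\bdk)\,e\big(\bdk\cdot qf(\bdp'/q)\big),
\]
with the $\bdk=\bdzero$ term producing the main term $\ll\delta^R Q_0^{n+1}$ and the support of $\widehat\psi$ restricting the remaining $\bdk$ to $1\le|\bdk|\ll\delta^{-1}$. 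For fixed $q$ and $\bdk$, Poisson summation in $\bdp'\in\Z^n$ converts $\sum_{\bdp'}\chi(\bdp'/q)e\big(q\,\bdk\cdot f(\bdp'/q)\big)$ into $q^n\sum_{\bda\in\Z^n}\int\chi(\bdx)\,e\big(q\phi_{\bdk,\bda}(\bdx)\big)\,\ud\bdx$ with phase $\phi_{\bdk,\bda}(\bdx):=\bdk\cdot f(\bdx)-\bda\cdot\bdx$. By \eqref{eq: CC} the Hessian $\sum_{\ell}k_\ell f_\ell''$ is nonsingular with determinant of size $\sim|\bdk|^n$, so the method of stationary phase renders this integral negligible unless the (then unique) critical point lies in $\supp\chi$ --- which confines $\bda$ to a set of measure $\sim|\bdk|^n$ --- and otherwise evaluates it as $c\,(q|\bdk|)^{-n/2}\,e\big(q\Lambda(\bdk,\bda)\big)$ plus lower-order terms, where $\Lambda(\bdk,\bda)$ is minus the Legendre transform of $\bdk\cdot f$ evaluated at $\bda$: homogeneous of degree one in $(\bdk,\bda)$, and, for each fixed direction $\bdk/|\bdk|$, a nondegenerate function of $\bda$.

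\emph{The dual count.} Assembling these expansions, the error term is controlled by a sum of the shape
\[
\delta^R\sum_{1\le|\bdk|\ll\delta^{-1}}|\bdk|^{-n/2}\sum_{\bda}\Big|\sum_{q\sim Q_0}q^{n/2}\,e\big(q\Lambda(\bdk,\bda)\big)\Big|,
\]
the $\bda$-sum running over $\sim|\bdk|^n$ lattice points. Summation by parts in $q$ bounds the innermost sum by $Q_0^{n/2}\min\big(Q_0,\|\Lambda(\bdk,\bda)\|^{-1}\big)$, with $\|\cdot\|$ the distance to $\Z$; after a dyadic decomposition of $\|\Lambda(\bdk,\bda)\|$ the matter reduces to estimating $M_\sigma(\bdk):=\#\{\bda\in\Z^n:\ \text{critical point in }\supp\chi,\ \|\Lambda(\bdk,\bda)\|\le\sigma\}$, and its average over $\bdk\in\Z^R$, for dyadic $0<\sigma\le\tfrac12$. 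Rescaling $\bda\mapsto|\bdk|\bda$ and invoking homogeneity identifies $M_\sigma(\bdk)$ with the number of rationals of denominator $|\bdk|$ lying within $\sigma/|\bdk|$ of the graph of the Legendre dual of $(\bdk/|\bdk|)\cdot f$ --- a hypersurface with nonvanishing Gaussian curvature by \eqref{eq: CC} --- i.e.\ a single-denominator instance of the very problem at hand in codimension one. Inserting sharp single-denominator bounds for curved hypersurfaces and, crucially, organising the joint summation over the dual variables $(\bdk,\bda)$ efficiently --- so as not to lose a full factor of $Q_0$ from those $\bda$ lying extremely close to a level set of the Legendre transform, which is where the Schindler--Yamagishi argument is wasteful for $R\ge2$ --- one bounds the error by $\delta^R Q_0^{n+1}+Q_0^{a}\delta^{c}$ for suitable exponents $a,c$ depending on $n,R$. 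A case split according to the size of $\delta$ relative to a threshold $Q_0^{-(n+2)/(n+2R)}$ then produces the claimed bound $\delta^R Q_0^{n+1}+Q_0^{n+1-e}$ with $e=\tfrac{(n+2)R}{n+2R}$.

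\emph{Main obstacle.} The crux is the dual count, and in particular its behaviour under the joint summation over $\bdk$ and $\bda$. When $R=1$, projective duality turns the dual count into a rational-point count for another hypersurface, and Huang's induction closes cleanly. For $R\ge2$, however, the locus swept out by $\bdx\mapsto\sum_{\ell}k_\ell f_\ell'(\bdx)$ as $\bdk$ ranges over directions is not a submanifold of complementary dimension but an open region foliated by the hypersurfaces $\{\text{graph of }(\bdtheta\cdot f)^*:\bdtheta\in\bbS^{R-1}\}$, so no single manifold count dominates the error. One must instead bound $M_\sigma(\bdk)$ --- and its average over $\bdk$ --- directly, using the nondegeneracy of $\Lambda(\bdk,\cdot)$ for each fixed $\bdk$ and the fact that distinct $\bdk$ give genuinely distinct level sets, so that lattice points cannot concentrate near all of them simultaneously. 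Obtaining such a bound with constants uniform in $\bdk$, with the correct power of $|\bdk|$ (the factor coupling the two Poisson summations), and strong enough to close the argument for every $R\ge1$ --- not merely $R\le2$ as in \cite{Srivastava2025} --- is the principal difficulty.
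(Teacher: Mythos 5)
Your write-up is a strategy outline rather than a proof, and the decisive step is missing at exactly the point you yourself flag as ``the principal difficulty''. After the two Poisson summations, stationary phase and summation by parts, you reduce matters to bounding $M_\sigma(\bdk)$ and its average over $\bdk$, and then assert that ``inserting sharp single-denominator bounds for curved hypersurfaces'' and ``organising the joint summation over $(\bdk,\bda)$ efficiently'' yields an error of the form $\delta^R Q_0^{n+1}+Q_0^{a}\delta^{c}$ from which a single case split in $\delta$ gives $e=\tfrac{(n+2)R}{n+2R}$. No such bound is derived, the exponents $a,c$ are never specified, and it is not clear the route could work even in principle: the exponent $\tfrac{(n+2)R}{n+2R}$ is not produced by one application of duality plus a threshold argument. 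In the actual argument it arises as the limit of the recursion $\alpha_{j+1}=n+R-\tfrac{n}{2\beta_j-n}$, $\beta_{j+1}=n+1-\tfrac{nR}{2\alpha_{j+1}-n}$ obtained by iterating a two-way self-improving mechanism (Lemma~\ref{lem: codim R self imp} and Lemma~\ref{lem: codim R self imp *}) between the original count and the dual count $\fkN^*_{\cM,w}$ taken over the whole family of Legendre duals $f^*_{\bdj/|\bdj|_\infty}$; a single pass of the kind you describe lands you at exponents of Schindler--Yamagishi/Srivastava strength, not at the fixed point. Even for $R=1$, Huang's exponent $1$ requires his full induction, not one dualisation, so the analogy you invoke already signals that the one-shot case split cannot close.

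Two further structural points that your sketch omits and that the paper's proof shows are essential. First, the iteration only closes if the hypothesised bound for the original count is available for \emph{non-isotropic} tolerances $\bddelta=(\delta_1,\dots,\delta_R)\in(0,1/4)^R$ (this is why Theorem~\ref{thm: refined count} is deduced from Theorem~\ref{thm: aniso refined}): after summation by parts and the dyadic split in the sizes of $\|d f_r(\bdk/d)\|$, the counts that reappear are anisotropic even if one starts from $\delta_1=\cdots=\delta_R$, so a purely isotropic induction hypothesis is insufficient. Second, the source of the improvement over \cite{Srivastava2025} is not a sharper treatment of the dual count in your sense but a refinement of the self-improving lemma itself (an extended admissible range $\beta>\tfrac{n(n+R+1)}{n+2R}$, obtained by handling the stationary-phase error terms more carefully and by streamlining the two-term form of the estimates), together with a sequence of nested weights $w_k\in\cW_{\cM}(w_{k-1},h)$ needed to make repeated application of the lemmas legitimate. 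As it stands, your proposal identifies the right objects (Legendre duals, the coupling of the two Poisson variables, the failure of a single dual manifold to control the error when $R\ge 2$) but leaves the quantitative core unproved, so it does not establish the theorem.
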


Comparing the numerology of Theorem~\ref{thm: refined count} with that of \cite{Srivastava2025}, as tabulated in Figure~\ref{fig: progress}, we see that the former provides a modest improvement over the latter whenever $R \geq 3$. To achieve this improvement, we exploit additional cancellation in error terms arising from stationary phase asymptotics used repeatedly in \cite{Srivastava2025}. We remark that the inefficiencies in the error term estimation were already highlighted in \cite{Srivastava2025}, where Theorem~\ref{thm: refined count} was also conjectured to hold.

As in \cite{Srivastava2025}, we shall in fact establish a more general version of Theorem~\ref{thm: refined count} for the number of rational points with bounded denominators contained in a non-isotropic neighbourhood of $\cM$: see Theorem~\ref{thm: aniso refined} below.

The proof of Theorem~\ref{thm: refined count} is rather technical in nature and relies on targeted improvements to specific estimates in \cite{Srivastava2025}. Nevertheless, we feel that the result is of interest since it represents the apparent theoretical limit of Fourier analytic methods based on the induction argument in \cite{Huang2020}, at least when applied to surfaces satisfying (CC). 

%%%%%%%%%%%%%%%%%%%%%%%%%%%%%%%%%%%%%%%%%%%%%%%%%%%%%%%%%%%%%%%%%%%%%%%%%%%%%%%%%%%%%%%%%%%%%%%%

%   A conjecture: the codimension 2 case

%%%%%%%%%%%%%%%%%%%%%%%%%%%%%%%%%%%%%%%%%%%%%%%%%%%%%%%%%%%%%%%%%%%%%%%%%%%%%%%%%%%%%%%%%%%%%%%%

\subsection{A conjecture: the codimension 2 case}\label{subsec: codim 2 conj} The work of Schindler--Yamagishi \cite{SY2022} and, to a further extent, \cite{Srivastava2025} and Theorem~\ref{thm: refined count} above, demonstrate that, under the condition (CC), it is possible to go beyond Huang's conjectural bounds \eqref{eq: Huang conj} for codimension $R \geq 2$. It is therefore natural to ask what the sharp range of estimates should be for this class of submanifolds. Here we focus on the codimension $2$ case.

\begin{conjecture}\label{conj: codim 2} For all sufficiently large even $n \in \N$, we have $\fke(n, 2) = 2$.
\end{conjecture}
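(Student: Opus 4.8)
The plan is to split Conjecture~\ref{conj: codim 2} into a matching upper and lower bound, $\fke(n,2) \leq 2$ and $\fke(n,2) \geq 2$, the latter being the hard direction and the whole point of the conjecture. For the upper bound one exhibits, for each admissible large even $n$, a specific codimension $2$ submanifold $\cM$ satisfying (CC) together with an explicit family of rationals showing $N_\cM(Q,\delta) \gg \delta^2 Q^{n+1} + Q^{n-1}$ cannot be beaten; here the natural candidates are the complex hypersurfaces in $\C^{m+1}$ alluded to in the abstract, i.e.\ graphs $\{(\bz, F(\bz)) : \bz \in \C^m\} \subset \C^{m+1} \cong \R^{2m+2}$ of a holomorphic $F$ whose complex Hessian is nondegenerate, since one expects Gaussian-rational points on such objects to be plentiful in a way that forces the secondary term $Q^{n-1}$. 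For the lower bound, I would first reduce, exactly as in \cite{Srivastava2025}, to counting rational points near the graph of $f = (f_1, f_2) \colon U \to \R^2$ satisfying \eqref{eq: CC}, and then run the Fourier-analytic induction of \cite{Huang2020} and \cite{Srivastava2025}, but organised so as to exploit the \emph{complex} structure when $(f_1, f_2)$ arise as the real and imaginary parts of a holomorphic function. The key observation is that under the complex reformulation of (CC), the phase functions appearing after Poisson summation become, after a change of variables, of the form $\mathrm{Re}(\overline{\bdw}\cdot \bz + F(\bz))$ in $m$ complex variables, whose stationary points and Hessian determinants can be analysed by complex (rather than real) stationary phase, effectively halving the relevant dimension and yielding the exponent $2$ in place of the exponent $\tfrac{(n+2)\cdot 2}{n+4}$ from Theorem~\ref{thm: refined count}.

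More concretely, the induction-on-scales scheme I have in mind proceeds as follows. First, fix a dyadic scale $q \sim N$ for the denominators and decompose the counting function dyadically in $q$; the contribution of $q \lesssim \delta^{-1/?}$ is handled by the trivial bound, so the task is the large-$q$ regime. Second, apply Poisson summation in the $\bz$ variables, as in \cite[\S4]{Huang2020}, to rewrite the count as a sum over dual lattice points of exponential sums with phase built from $f$; this is where one uses the graph parametrisation. Third — the crucial step — invoke the curvature condition to run stationary phase, but in the holomorphic case replace the real stationary phase estimate by the observation that the Hessian of $\mathrm{Re}\,(\text{holomorphic})$ factors through $|\det_{\C} F''|^2$, so the real $2m\times 2m$ Hessian determinant is a perfect square and the stationary phase main term carries a square-root that is a genuine holomorphic object; this extra algebraic structure is what produces the saving. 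Fourth, feed the resulting bound back into the induction hypothesis, tracking how the exponent $e$ transforms under one step, and verify that $e = 2$ is a fixed point of the recursion for $n$ large enough (the "$n$ sufficiently large" hypothesis should enter precisely in making the arithmetic of the recursion close). Finally, combine with the upper-bound construction to conclude equality.

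The main obstacle I anticipate is making rigorous the claim that the complex structure genuinely improves the stationary phase step rather than merely reorganising it: the real Hessian being a perfect square does not, by itself, change its \emph{size}, so one needs to locate where the holomorphicity buys an honest extra power of the scale — plausibly in the \emph{number} of relevant stationary points, or in an improved bound for the error term in the asymptotic expansion (the "additional cancellation in error terms" philosophy of Theorem~\ref{thm: refined count}, now pushed further using Cauchy--Riemann). A secondary difficulty is the degenerate/boundary stationary-phase regime: when the dual frequency $\bdw$ lies near the locus where the complex stationary point escapes the support $X$, one gets no oscillatory gain and must argue by a separate van der Corput / sublevel-set estimate adapted to the holomorphic phase; controlling the measure of this bad locus uniformly is likely to be the most delicate technical point, and is presumably why the conjecture is only asserted, not proved, in this paper. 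One should also double-check that admissible even $n$ with a codimension-$2$ (CC) submanifold of the required holomorphic type actually exist in the needed range, so that the statement is non-vacuous; the discussion around \cite[Remark 1.14]{Srivastava2025} suggests this is subtle and may itself constrain which $n$ are "sufficiently large".
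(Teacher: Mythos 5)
The statement you are addressing is a conjecture: the paper does not prove it, and only supplies evidence, namely the upper bound $\fke(n,2)\le 2$ for $n\ge 8$ via the complex sphere and Siegel's asymptotics (Example~\ref{ex: sphere}), the partial lower bound $\fke(2m,2)\ge 1+\tfrac{m}{m+2}$ from \cite{Srivastava2025} and Theorem~\ref{thm: refined count}, and the Gaussian-rational analogue Theorem~\ref{thm: main cplx}. Your upper-bound half is essentially the paper's Example~\ref{ex: sphere} in outline, so that part is consistent. Your proposal for the hard direction $\fke(n,2)\ge 2$, however, contains a genuine gap at exactly the step you yourself flag. Lemma~\ref{lem: ND vs CND} shows that for a holomorphic graph the real Hessian determinant is $(-1)^m(\theta_1^2+\theta_2^2)^m|\det f''|^2$: holomorphy guarantees (CC) but does not reduce the dimension of the real oscillatory integral nor improve the stationary-phase decay, which remains $\lambda^{-n/2}$ for a $2m$-dimensional real phase exactly as for any (CC) surface. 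No concrete mechanism is identified by which ``complex stationary phase'' halves the effective dimension, and the fact that the Hessian is a perfect square does not change its size, as you concede; so the claimed saving has no source.

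Moreover, the assertion that $e=2$ becomes a fixed point of the induction for $n$ large is false for the recursion actually governing this method. The self-improving scheme in \S\ref{sec: int dim} (Lemma~\ref{lem: codim R self imp} and Lemma~\ref{lem: codim R self imp *}) has limiting exponent $\tfrac{(n+2)R}{n+2R}$, which for $R=2$ equals $2-\tfrac{4}{n+4}<2$ for every finite $n$; it approaches $2$ only as $n\to\infty$, and the paper states that Theorem~\ref{thm: refined count} already represents the apparent theoretical limit of this Fourier-analytic induction under (CC). The structural reason the Gaussian-rational problem (Theorem~\ref{thm: main cplx}) does reach exponent $\approx 2m$ is that with denominators in $\Z[i]$ the dual counting function has the \emph{same form} as the original, restoring the self-duality of Huang's hypersurface argument, whereas in the real codimension-$2$ problem the dual count runs over frequency pairs $\bdj\in\N_0^2$ and is of a different shape, which is precisely where the loss occurs; your plan does not explain how to recover self-duality while keeping real (rational) denominators. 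As it stands, the proposal is a programme with the decisive step missing, not a proof, which is consistent with the statement being left as a conjecture in the paper.
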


Using explicit examples, we shall show below that $(n, 2)$ is (CC) admissible for all $n \in \N$ even, and $\fke(n, 2) \leq 2$ for $n \geq 8$. This partially motivates the numerology of the conjecture. On the other hand, $(n, 2)$ cannot be (CC) admissible if $n \in \N$ is odd,\footnote{To see this, note that when $n \in \N$ is odd the determinant in (CC) is an odd function of $\bdtheta$ and therefore must vanish on the unit sphere $\bbS^{R-1}$ when $R \geq 2$.} and so there is no loss of generality in only considering manifolds of even dimension. As discussed in \S\ref{subsec: n=2} below, $\fke(2, 2) = 2$ fails, which necessitates the restriction to large $n$. To date, the best partial result toward Conjecture~\ref{conj: codim 2} is the bound 
\begin{equation*}
    \fke(2m, 2) \geq 1 + \frac{m}{m+2} \qquad \textrm{for all $m \in \N$}
\end{equation*}
from \cite{Srivastava2025}. Here we do not improve the estimates beyond this, but provide supporting evidence for the numerology in Conjecture \ref{conj: codim 2}.

The motivation for Conjecture~\ref{conj: codim 2} comes from reinterpreting the condition \eqref{eq: CC} in terms of complex derivatives. Given $U \subset \C^m$ an open set and $f \colon U \to \C$ a holomorphic function, consider the analytic hypersurface 
\begin{equation}\label{eq: analytic M}
   \cM = \{(\bdz, f(\bdz)) : \bdz \in Z \} \subseteq \C^{m+1}
\end{equation}
for an open, pre-compact set $Z \Subset U$, where  $\bdz = (z_1, \ldots, z_m) \in \C^m$ are complex coordinates. Note that we may also view $\cM$ as a real codimension $2$ submanifold $\cM_{\R}$ of $\R^{2m+2}$. Indeed, we may write $f$ in terms of its real and imaginary parts as
\begin{equation}\label{eq: f real img}
    f(\bdx + i\bdy)  = u(\bdx,\bdy) + i v(\bdx,\bdy) \quad \textrm{for all $(\bdx, \bdy) \in U_{\R}$}
\end{equation}
where 
\begin{equation*}
    A_{\R} := \{(\bdx, \bdy) \in \R^m \times \R^m : \bdx + i \bdy \in A\} \qquad \textrm{for $A \subseteq \C^m$}
\end{equation*}
and $u$, $v \colon U_{\R} \to \R$ are real valued functions satisfying the Cauchy--Riemann equations. We then define $\cM_{\R}$ as the graph of $f_{\R} := (u, v) \colon U_{\R} \to \R^2$ over $Z_{\R}$. 

Using the Cauchy--Riemann equations, one may easily relate the condition \eqref{eq: CC} for the real function $f_{\R} \colon U_{\R} \to \R^2$ to the complex derivatives of the holomorphic function $f \colon U \to \C$. In particular, in \S\ref{sec: examples} we discuss the following identity. 

\begin{lemma}\label{lem: ND vs CND} Let $U \subseteq \C^m$ be open and $f \colon U \to \C$ be a holomorphic function satisfying \eqref{eq: f real img} as above. Then
    \begin{equation}
\label{eq: ND vs CND}
 \det \bigl(\theta_1 u''(\bdx, \bdy) + \theta_2 v''(\bdx, \bdy) \bigr)  =
(-1)^m \, (\theta_1^2 + \theta_2^2)^m | \det f''(\bdx + i\bdy)|^2
\end{equation}
holds for all $\theta_1$, $\theta_2 \in \R$.
\end{lemma}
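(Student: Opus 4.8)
The plan is to reduce \eqref{eq: ND vs CND} to an elementary piece of linear algebra fed entirely by the Cauchy--Riemann equations. Write $z_j = x_j + i y_j$ and introduce the two real symmetric $m \times m$ matrices $P := \bigl(\partial_{x_j}\partial_{x_k} u\bigr)_{j,k}$ and $S := \bigl(\partial_{x_j}\partial_{x_k} v\bigr)_{j,k}$. First I would differentiate the Cauchy--Riemann relations $u_{x_j} = v_{y_j}$, $u_{y_j} = -v_{x_j}$ a second time and invoke equality of mixed partials to rewrite every second-order partial of $u$ and of $v$ in terms of $P$ and $S$; this yields, in the splitting $\R^{2m} = \R^m_{\bdx} \times \R^m_{\bdy}$,
\[
 u''(\bdx,\bdy) = \begin{pmatrix} P & -S \\ -S & -P \end{pmatrix}, \qquad v''(\bdx,\bdy) = \begin{pmatrix} S & P \\ P & -S \end{pmatrix}.
\]
Separately, differentiating the identity $\partial f/\partial z_k = u_{x_k} + i v_{x_k}$ (itself a consequence of the Cauchy--Riemann equations) shows that $f''(\bdx + i \bdy) = P + i S$ as an $m \times m$ complex matrix.

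Next, set $M := \theta_1 u''(\bdx,\bdy) + \theta_2 v''(\bdx,\bdy)$. By the above it has the block shape $M = \left(\begin{smallmatrix} A & B \\ B & -A \end{smallmatrix}\right)$ with $A := \theta_1 P + \theta_2 S$ and $B := \theta_2 P - \theta_1 S$ real symmetric, and a direct check gives
\[
 A + iB = (\theta_1 + i\theta_2)\,\overline{f''(\bdx+i\bdy)}, \qquad A - iB = (\theta_1 - i\theta_2)\, f''(\bdx+i\bdy).
\]
The crux is then the determinant identity
\[
 \det \begin{pmatrix} A & B \\ B & -A \end{pmatrix} = (-1)^m \det(A + iB)\,\det(A - iB),
\]
which I would prove by left-multiplying $M$ by $\left(\begin{smallmatrix} I & iI \\ I & -iI \end{smallmatrix}\right)$ (a matrix of determinant $(-2i)^m$) and then performing a single block column operation to reach block-triangular form with diagonal blocks $A + iB$ and $2i(A - iB)$. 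Plugging in the two displayed expressions and using multiplicativity of the determinant gives $\det M = (-1)^m (\theta_1^2+\theta_2^2)^m\, \overline{\det f''(\bdx+i\bdy)}\cdot \det f''(\bdx+i\bdy) = (-1)^m (\theta_1^2+\theta_2^2)^m \,|\det f''(\bdx+i\bdy)|^2$, which is exactly \eqref{eq: ND vs CND}.

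The argument has no conceptual difficulty; the only place requiring care is the bookkeeping of signs — both when assembling the four blocks of $u''$ and $v''$ from the twice-differentiated Cauchy--Riemann equations, and when tracking the powers of $i$ and $-1$ in the block-determinant identity. Everything can be sanity-checked against the scalar case $m = 1$, where $M$ is the $2 \times 2$ matrix $\left(\begin{smallmatrix} A & B \\ B & -A \end{smallmatrix}\right)$ with $\det M = -(A^2 + B^2) = -(\theta_1^2+\theta_2^2)|f''|^2$, in agreement with the general formula. Since the identity is established for all real $\theta_1, \theta_2$ directly, no further continuity or polynomiality argument is needed.
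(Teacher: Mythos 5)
Your proof is correct, and it takes a genuinely different route from the paper's. The paper never writes out the blocks of $u''$ and $v''$: it only records the column relation $v'' = \begin{pmatrix} -c_{m+1} & \cdots & -c_{2m} & c_1 & \cdots & c_m \end{pmatrix}$ forced by the Cauchy--Riemann equations, expands $\det(\theta_1 u'' + \theta_2 v'')$ by multilinearity over subsets of columns, notes that only subsets of the form $X \cup \{j+m : j \in X\}$ survive and each contributes $\det u''$, obtaining $\det(\theta_1 u'' + \theta_2 v'') = (\theta_1^2+\theta_2^2)^m \det u''$; it then identifies $(-1)^m \det u'' = |\det f''|^2$ by viewing $F = \partial_{\bdz} f$ as a holomorphic map and invoking the standard relation $J_{\R}F = |JF|^2$ between real and complex Jacobian determinants (cited from Rudin). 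You instead make the second-order structure explicit, $u'' = \begin{pmatrix} P & -S \\ -S & -P \end{pmatrix}$, $v'' = \begin{pmatrix} S & P \\ P & -S \end{pmatrix}$, $f'' = P + iS$, and fold both steps of the paper's argument into the single block identity $\det\begin{pmatrix} A & B \\ B & -A \end{pmatrix} = (-1)^m \det(A+iB)\det(A-iB)$ combined with the factorisations $A+iB = (\theta_1+i\theta_2)\,\overline{f''}$ and $A-iB = (\theta_1-i\theta_2)\,f''$; I checked the blocks, the factorisations, and the sign bookkeeping (the $(-2i)^m$ determinant of your multiplier against the $(2i)^m$ from the triangularised form), and everything is consistent, as is your $m=1$ sanity check. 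What your route buys: it is fully self-contained, absorbing the real-versus-complex Jacobian fact the paper quotes into the same elementary block computation, and it makes the emergence of $|\det f''|^2$ completely transparent. What the paper's route buys: the multilinearity argument requires no explicit computation of second-order blocks, cleanly isolates the purely real identity $\det(\theta_1 u'' + \theta_2 v'') = (\theta_1^2+\theta_2^2)^m \det u''$, and then finishes with an off-the-shelf fact about holomorphic Jacobians.
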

We emphasise that the second derivatives $u''$ and $v''$ on the left-hand side of \eqref{eq: ND vs CND} are $m \times m$ real matrices defined with respect to the real differential structure, whereas $f''$ is an $m \times m$ complex matrix defined with respect to the complex differential structure. In light of the formula \eqref{eq: ND vs CND}, we make the following definition. 

\begin{definition} Given $U \subset \C^m$ an open set and $f \colon U \to \C$ a holomorphic function, we say $f$ satisfies the \textit{complex curvature condition} if
\begin{equation*}
\det  f''(\bdz)  \neq 0 \qquad \textrm{for all $\bdz \in U$.} \eqno{\textrm{($\C$-CC)}}   
\end{equation*}
\end{definition}

As in the real case, we say an analytic hypersurface $\cM \subseteq \C^{m+1}$ satisfies the \textit{complex curvature condition} ($\C$-CC) if it is of the form \eqref{eq: analytic M} for $f \colon U \to \C$ a holomorphic function satisfying ($\C$-CC). 

By Lemma~\ref{lem: ND vs CND}, the condition (CC) reduces to ($\C$-CC) for the class of codimension $2$ surfaces in $\R^{2m+2}$ arising from analytic hypersurfaces in $\C^{m+1}$. Furthermore, by the remarks following Definition~\ref{dfn: CC}, the condition ($\C$-CC) is the natural complex analogue of the nonvanishing Gaussian curvature condition for real hypersurfaces from \cite{Huang2020}. These observations provide a natural family of (real) co-dimension 2 manifolds satisfying (CC), essentially by `complexifying' the examples which demonstrate the sharpness of Huang's result for real hypersurfaces. 

\begin{example}[Complex sphere]\label{ex: sphere} Let $B_r = B(0,r) \subset \C^m$ be an open ball around the origin of radius $0 < r < 1$ and consider the portion of the complex unit sphere
\begin{equation*}
    \bbS_{\C}^m := \bigl\{ (\bdz, z_{m+1}) \in B_r \times \C : F(\bdz, z_{m+1}) = 1  \bigr\},
\end{equation*}
where $F \colon \C^{m+1} \to \C$ is the homogeneous quadratic form
\begin{equation*}
    F(z_1, \dots, z_{m+1}) := z_1^2 + \cdots + z_{m+1}^2.
\end{equation*}

By the holomorphic implicit function theorem, provided $r$ is chosen sufficiently small, $\bbS_{\C}^m$ admits a parametrisation of the form \eqref{eq: analytic M} where $f \colon B_r \to \C$ satisfies ($\C$-CC). We identify $\bbS_{\C}^m$ with a codimension $2$ real submanifold of $\R^{2m+2}$. Using Lemma~\ref{lem: ND vs CND}, one can show this real submanifold satisfies (CC); we defer the details to \S\ref{sec: examples}. Furthermore, under this identification,
\begin{equation*}
    N_{\bbS^m_{\C}}(Q,0) = \#\big\{(\bda,q) \in \Z[i]^{m+1} \times \N : F(\bda) = q^2 ,\, \bda/q \in B_r \times \mathbb{C} \textrm{ and } 1 \leq q \leq Q \big\}.
\end{equation*}

To estimate $N_{\bbS^m_{\C}}(Q,0)$, we appeal to classical work of Siegel \cite{Siegel1922, Siegel1944} which extends the circle method bounds on the Waring problem to the setting of general algebraic number fields. Here we are interested in the field of Gaussian rationals $\Q(i)$. Given\footnote{The main theorem in \cite{Siegel1944} has the additional restriction that $\nu$ is \textit{totally positive}. However, this condition vacuously holds for all $\nu \in \Q(i)$ since $\Q(i)$ is a totally imaginary field: see, for instance, the first line of \cite{Siegel1921}.} $\nu \in \Z[i]$ in the ring of integers of $\Q(i)$, define
\begin{equation*}
  A_m(\nu) := \#\big\{ (\lambda_1, \dots, \lambda_m) \in \Z[i]^m : |\lambda_j|^2 \leq r|\nu|,\, 1 \leq j \leq m \textrm{ and } \lambda_1^2 + \cdots + \lambda_m^2 = \nu \big\}.
\end{equation*}
Suppose $\nu = a + bi$ with $a$, $b \in \Z$ and $b$ even. For $m \geq 5$, the work of Siegel \cite{Siegel1922, Siegel1944} gives the asymptotic
\begin{equation}\label{eq: Siegel asympt}
A_m(\nu) = C_m(\nu)|\nu|^{m-2} + o_m(|\nu|^{m-2}),
\end{equation}
where $C_m(\nu) \sim_m 1$.\footnote{On the other hand, if $\nu = a + bi$  with $b$ odd, then it is trivial to see that $A_m(\nu) = 0$.} Thus, for $m \geq 4$, we have
\begin{equation*}
    N_{\bbS_{\C}^m}(Q, 0) = \sum_{q = 1}^Q A_{m+1}(q^2) \gtrsim Q^{2m-1},
\end{equation*}
provided $Q$ is sufficiently large, depending on $m$.
\end{example}

Unpacking the definitions, Conjecture~\ref{conj: codim 2} states the following. For any $m \in \N$ and codimension $2$ submanifold $\cM$ of $\R^{2m+2}$ satisfying (CC), for all $\varepsilon > 0$, the bound
\begin{equation*}
    N_{\cM}(Q, \delta) \ll_{\varepsilon} \delta^2 Q^{2m+1} + Q^{2m - 1 + \varepsilon}
\end{equation*}
holds for all $Q \in \N$ and $\delta \in (0,1/4)$. Example~\ref{ex: sphere} shows that, if true, this would be sharp in the sense that, for all $m \geq 4$, there exist codimension $2$ submanifolds $\cM \subseteq \R^{2m+2}$ satisfying (CC) for which the $Q^{2m-1}$ term cannot be replaced with $Q^r$ for some $r < 2m-1$.

%%%%%%%%%%%%%%%%%%%%%%%%%%%%%%%%%%%%%%%%%%%%%%%%%%%%%%%%%%%%%%%%%%%%%%%%%%%%%%%%%%%%%%%%%%%%%%%%

%   Counting Gaussian rationals near complex hypersurfaces

%%%%%%%%%%%%%%%%%%%%%%%%%%%%%%%%%%%%%%%%%%%%%%%%%%%%%%%%%%%%%%%%%%%%%%%%%%%%%%%%%%%%%%%%%%%%%%%%

\subsection{A variant: counting Gaussian rationals}\label{subsec: Gaussian} In the complex setting, there is another natural counting problem for rational points near a manifold. To describe this, given $z \in \C$, define $|z|_{\infty} := \max\{|\fkR z|,|\fkI z|\}$. Let $m$, $d \in \N$ with $m < d$ and $\cM\subset \C^d$ be a complex $m$-dimensional manifold and consider
\begin{equation*}
  N_{\mathcal{M}}^\C(Q,\delta)  = \# \big\{ \bdb/q : \mathrm{dist}(\bdb/q, \cM) \le \delta/|q|_{\infty},\, \bdb \in \Z[i]^d \textrm{ and } q\in \Z[i], \, 0 < |q|_{\infty} \le Q \bigr\},  
\end{equation*}
where now the denominator $q$ varies over \textit{Gaussian integers} satisfying $0 < |q|_{\infty}  \leq Q$ and 
\begin{equation*}
\mathrm{dist} (\bdz, \cM) = \inf_{\bdw = (w_1, \dots, w_d) \in \cM} \max_{1\le i \le d}|z_i - w_i|_{\infty}   \qquad \textrm{for all $\bdz = (z_1, \dots, z_d) \in \C^d$.}
\end{equation*}
Of course $\cM$ can be thought of as a real $2m$-dimensional submanifold of $\R^{2d}$ and all we are doing here is considering a larger class of rational points in $\Q^{2d}$ whose denominator has $\ell^{\infty}$-norm at most $Q$. Considering only real denominators $q = q_1 + i 0$ (so that $|q|_{\infty} = q_1$) leads back to the original problem and hence 
\begin{equation*}
 N_\cM(Q,\delta) \le N_\cM^\C(Q,\delta).   
\end{equation*}
The choice of $\ell^{\infty}$-norm on $\C$ (rather than the usual complex modulus) helps facilitate the passage between complex and real manifolds. 

By adapting the methods of Huang~\cite{Huang2020}, and incorporating modifications of \cite{Srivastava2025} into our argument, we are able to prove sharp bounds for $N_{\mathcal{M}}^\C(Q,\delta)$ in the hypersurface case. To give a precise statement of our result, fix a dimensional constant $\kappa_m \geq 1$, chosen sufficiently large for our forthcoming purposes, and let 
\begin{equation*}
    \cE(Q) := 
    \begin{cases}
        \log^{\kappa_m}(Q) & \textrm{if $m \geq 3$,} \\
        e^{\kappa_2\sqrt{\log Q \log \log Q}} & \textrm{if $m = 2$.}
    \end{cases}
\end{equation*}
With this definition, our bounds have the following form. 

\begin{theorem}\label{thm: main cplx} Let $m \in \N$ and $\cM \subseteq \C^{m+1}$ be a complex $m$-dimensional hypersurface satisfying \emph{($\C$-CC)}. 
\begin{enumerate}[a)]
    \item If $m \geq 2$, then $N^\C_{\cM}(Q,\delta) \ll \delta^2 Q^{2m+2} + Q^{2m}\cE(Q)$;\\
    \item If $m = 1$, then $N^\C_{\cM}(Q,\delta) \ll \delta^2 Q^4 + Q^3 \log^4 Q$. 
\end{enumerate}
\end{theorem}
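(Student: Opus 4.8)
The plan is to follow the Fourier-analytic induction scheme of Huang~\cite{Huang2020}, as adapted in \cite{Srivastava2025}, but carried out over the Gaussian integers $\Z[i]$ in place of $\Z$. First I would reduce to a local, graphical situation: cover $\cM$ by finitely many pieces, each of which is a graph $\{(\bdz, f(\bdz)) : \bdz \in Z\}$ for a holomorphic $f \colon U \to \C$ satisfying ($\C$-CC) on a slightly larger set $U \Subset \C^m$. The quantity $N^\C_{\cM}(Q,\delta)$ then counts Gaussian rationals $\bdb/q = (\bdc/q, b_{m+1}/q)$ with $\bdc \in \Z[i]^m$, $b_{m+1} \in \Z[i]$, $q \in \Z[i]$, $0 < |q|_\infty \le Q$, and $|f(\bdc/q) - b_{m+1}/q|_\infty \le C\delta/|q|_\infty$ (after absorbing the Lipschitz constant of $f$ to control the other coordinates). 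For each fixed denominator $q$ and each fixed $\bdc$, the number of admissible $b_{m+1}$ is $O(\delta + 1)$; summing the $O(\delta)$ contribution over all $\bdc$ with $\bdc/q \in Z$ gives the main term $\delta^2 Q^{2m+2}$ (there are $\sim |q|_\infty^{2m}$ choices of $\bdc$ and $\sim Q^2$ choices of $q$), so the real content is bounding the "$O(1)$" contribution, i.e. the count of $(\bdc, q)$ with $q f(\bdc/q) \in \Z[i] + O(\delta/|q|_\infty)$, by $Q^{2m}\cE(Q)$.

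The heart of the argument is an exponential-sum / Poisson-summation step over $\Z[i]$. Writing $e_\C(w) := e^{2\pi i \,\fkR w}$ and smoothing the indicator of the $\delta$-neighbourhood, one expands the count as a sum over $\bdk \in \Z[i]^m$ (dual frequencies) of oscillatory integrals with phase $\bdz \mapsto \fkR\big(q f(\bdz/q) - \bdk \cdot \bdz\big)$ over $\bdz \in \C^m \cong \R^{2m}$. The ($\C$-CC) hypothesis, via Lemma~\ref{lem: ND vs CND}, says precisely that the real Hessian of $\fkR(f_\R)$ (paired against any unit vector in the two "codimension" directions) is nondegenerate, so these integrals obey stationary-phase bounds with the full $2m$-dimensional decay $|q|_\infty^{-m}$ and a Jacobian factor $|\det f''|^{-1}$; the critical point lies at $\bdz$ with $\nabla f(\bdz/q) = \bdk$, and the complex-linear structure of $\nabla f$ means the map $\bdz \mapsto \nabla f(\bdz)$ is a holomorphic local diffeomorphism, which keeps the geometry clean. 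This converts the problem, after the standard projective-duality manoeuvre of Huang, into counting Gaussian-rational points near the \emph{dual} hypersurface $\cM^*$, which again satisfies ($\C$-CC) because ($\C$-CC) is the complex analogue of nonvanishing Gaussian curvature and is preserved under Legendre duality (this is the complex counterpart of the computation in \cite{Huang2020}). One then iterates: the truncated sum over $\bdk$ with $|\bdk|_\infty \sim K$ contributes a term controlled by $N^\C_{\cM^*}(\,\cdot\,, \cdot\,)$ at a dilated scale, and induction on $Q$ (dyadically) closes the estimate, with the number of iterations / the ambient dimension producing the logarithmic-power loss $\log^{\kappa_m} Q$ when $m \ge 3$, and the classical "$e^{\sqrt{\log Q \log\log Q}}$" divisor-type loss when $m = 1, 2$ (this is where the bound $N^\C_\cM(Q,\delta) \ll \delta^2 Q^4 + Q^3 \log^4 Q$ in part (b) comes from — the one-dimensional base case of the induction has a smaller loss, handled directly by a Weyl/van der Corput estimate for $\Z[i]$-sums).

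The main obstacle, and the reason a separate paper is warranted rather than a one-line remark, is number-theoretic bookkeeping over $\Z[i]$ rather than $\Z$. Several inputs used freely in \cite{Huang2020, Srivastava2025} must be re-established in the Gaussian setting: Poisson summation over the lattice $\Z[i]^m \subset \R^{2m}$ (routine, the dual lattice is again $\Z[i]^m$ up to normalisation); estimates for Gauss/Kloosterman-type sums $\sum_{a \bmod q} e_\C(\cdots)$ with $q \in \Z[i]$, including the bound $|S(a, q; n)| \ll_\varepsilon |q|_\infty^{1+\varepsilon} \gcd(\cdot)$-type results; the relevant divisor bound $d_{\Z[i]}(n) \ll_\varepsilon |n|^\varepsilon$ and, for the $m=2$ endpoint, the sharper $d_{\Z[i]}(n) \ll \exp\big(O(\log|n|/\log\log|n|)\big)$ which explains the precise shape of $\cE(Q)$; and a careful treatment of units and associates in $\Z[i]$ so that "denominators" are counted without overcounting (multiplication by $i$). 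I would isolate these in a preliminary section, then show the oscillatory-integral and duality parts of Huang's machine are insensitive to whether the underlying ring is $\Z$ or $\Z[i]$ once one works with the $\ell^\infty$-norm $|\cdot|_\infty$ on $\C$ — which is exactly why the problem was set up with $|\cdot|_\infty$ rather than the modulus, since it makes $\C \cong \R^2$ isometrically as normed spaces and lets the real stationary-phase bounds transfer verbatim. The delicate point in the iteration is ensuring the error terms from repeated stationary phase (the issue flagged in \cite{Srivastava2025} and addressed by Theorem~\ref{thm: refined count}) remain under control; here, since $R$ is effectively $2$ (real codimension), the numerology is favourable and the refinement is not needed, but one must still track constants through the $O(\log Q)$ iterations to land the clean powers of $\log Q$ stated.
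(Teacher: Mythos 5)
Your skeleton --- localise to a graph, smooth the count, Fej\'er kernel and Poisson summation over $\Z[i]^m$, stationary phase with nondegeneracy supplied by Lemma~\ref{lem: ND vs CND}, pass to the complex Legendre dual (which again satisfies ($\C$-CC)), and iterate --- is indeed the route taken in \S\ref{sec: Guassian count}. But the proposal misidentifies the mechanism that produces the actual shape of the theorem. The paper uses no Gauss or Kloosterman sums over $\Z[i]$, no divisor bounds, and no unit/associate bookkeeping: after Poisson summation and stationary phase, the sum over denominators is handled by elementary summation by parts (Lemma~\ref{lem: summation by parts}), and the only ``arithmetic'' that ever appears is the distance-to-nearest-integer function $\|\cdot\|$. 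The factor $\cE(Q)$ is not a divisor-type loss; it is produced by the self-improving mechanism (Lemma~\ref{lem: cplx self imp} and its dual form), in which a hypothesised bound with exponent $\beta$ yields a bound with exponent $2m+2-\tfrac{2m}{\beta-m}$ at the cost of a multiplicative constant and extra powers of $\log Q$. The recursion $\beta_{j+1}=2m+2-\tfrac{2m}{\beta_j-m}$ converges to $2m$ geometrically when $m\ge 3$ (so $j\sim\log\log Q$ iterations give $\log^{\kappa_m}Q$), only like $1/j$ when $m=2$ (optimising $j\sim\sqrt{\log Q/\log\log Q}$ gives $e^{\kappa_2\sqrt{\log Q\log\log Q}}$), and for $m=1$ it has a fixed point at $\beta=3$, which is exactly why part (b) reads $Q^{3}\log^4 Q$. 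Your account of the low-dimensional cases is therefore wrong on its own terms: the $m=1$ loss is \emph{polynomially larger} than the expected $Q^{2}$, not a ``smaller loss handled directly by a Weyl/van der Corput estimate for $\Z[i]$-sums''; no such estimate is used, and it is not clear it could repair the fixed-point obstruction.

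A second concrete gap is that you only ever formulate the isotropic count. After stationary phase one splits dyadically according to the sizes of $\|\fkR\, \overline{j} f^{*}(\overline{\bdk}/\overline{j}\,)\|$ and $\|\fkI\, \overline{j} f^{*}(\overline{\bdk}/\overline{j}\,)\|$ separately, so the dual counting function is evaluated at non-isotropic parameters $\bddelta_{*,\bdi}=(2^{i_{\fkR}}Q^{-1},2^{i_{\fkI}}Q^{-1})$ with $i_{\fkR}\neq i_{\fkI}$ in general. The iteration therefore only closes if at every stage one proves the two-parameter bound $\fkN_{\cM, w}^{\C}(Q,\bddelta)\ll \bddelta^{\times}Q^{2m+2}+\bddelta^{\times}(\bddelta^{\wedge})^{-2}Q^{\beta}\log^{\gamma}Q$ for \emph{all} $\bddelta=(\delta_{\fkR},\delta_{\fkI})$, not merely for $\delta_{\fkR}=\delta_{\fkI}$; the weight $\bddelta^{\times}(\bddelta^{\wedge})^{-2}$ and the monotonicity/comparison-of-terms step are precisely what allow the hypothesised bound to be fed back into the dyadic pieces. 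As written, your induction hypothesis does not cover these anisotropic scales, so the scheme does not close; this is the same structural point flagged in \S\ref{subsec: iter cplx}, and it must be built into the statement of the self-improving lemma from the outset.
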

As we shall see below, for $m \geq 4$, Theorem~\ref{thm: main cplx} establishes the sharp bound, up to the subpolynomial losses represented by the $\cE(Q)$ factor. For $m = 1$ it may be possible to improve the $Q^3$ power in the second term to $Q^2$. This situation is analogous to the situation for real $n$-dimensional hypersurfaces in $\R^{n+1}$. Indeed, for real hypersurfaces, Huang's method \cite{Huang2020} is very effective in higher dimensions, but for $n=1$ fails to recover the sharp bounds of Huxley~\cite{Huxley1994} and Vaughan--Velani~\cite{VV2006} for planar curves.

To see sharpness for the $m \geq 4$ case, we revisit Example~\ref{ex: sphere} and adapt it to the Gaussian rational setting. 

\begin{example}[Complex sphere]\label{ex: sphere Gauss} The analysis of Example~\ref{ex: sphere} can also be used to estimate the Gaussian rational count $N_{\bbS^m_{\C}}^{\C}(Q,0)$. In particular, for $m \geq 4$ we can again use the asymptotic \eqref{eq: Siegel asympt} to deduce that
\begin{equation*}
    N_{\bbS_{\C}^m}^{\C}(Q, 0) = \sum_{\substack{q \in \Z[i] \\ 0 < |q|_{\infty} \leq Q}} A_{m+1}(q^2) \gtrsim Q^{2m},
\end{equation*}
provided $Q \geq 1$ is sufficiently large, depending on $m$.
\end{example}

Although Theorem~\ref{thm: main cplx} may be of interest in its own right, our primary motivation is that it provides evidence for Conjecture~\ref{conj: codim 2} above. Indeed, it is notable that the sharp examples for Theorem~\ref{thm: main cplx} are essentially the same as the posited sharp examples for Conjecture~\ref{conj: codim 2}.

%%%%%%%%%%%%%%%%%%%%%%%%%%%%%%%%%%%%%%%%%%%%%%%%%%%%%%%%%%%%%%%%%%%%%%%%%%%%%%%%%%%%%%%%%%%%%%%%

%   Counting Gaussian rationals near complex hypersurfaces

%%%%%%%%%%%%%%%%%%%%%%%%%%%%%%%%%%%%%%%%%%%%%%%%%%%%%%%%%%%%%%%%%%%%%%%%%%%%%%%%%%%%%%%%%%%%%%%%

\subsection{Low dimensional cases}\label{subsec: n=2} 

Conjecture~\ref{conj: codim 2} asserts that $\fke(n, 2) = 2$ holds for all \textit{sufficiently large} even $n \in \N$. The restriction to large $n$ is necessary: $\fke(2, 2) = 2$ is easily seen to be false. 

\begin{example}[Complex parabola]
Consider the portion of the complex parabola
\begin{equation*}
  \bbP^1_{\C} := \{(z, z^2)  : z \in \C,\, -1 < \fkR z,\, \fkI z < 1 \}.
\end{equation*} 
It is straightforward to verify $\bbP^1_{\C}$ satisfies ($\C$-CC). 
 We identify $\bbP_{\C}^1$ with the codimension $2$ real submanifold of $\R^4$ given by
\begin{equation*}
     \bbP^1_{\C} = \{(x, y, x^2 - y^2, 2xy)  :  -1 < x,\, y < 1 \}.
\end{equation*}
By either Lemma~\ref{lem: ND vs CND} or direct verification, this satisfies (CC).

Let $Q \in \N$ be large. If $a$, $b$, $u \in \Z$ satisfy $1 \leq a < u \leq \sqrt{Q}$ and $1 \leq b < u \leq \sqrt{Q}$, then
\begin{equation*}
 (x,y) := \Big(\frac{a}{u}, \frac{b}{u}\Big) \quad \textrm{satisfies} \quad (x, y, x^2 - y^2, 2xy) \in \bbP^1_{\C} \quad \textrm{and} \quad (x, y, x^2 - y^2, 2xy) = \frac{\bdp}{q} 
\end{equation*}
where $1 \leq q \leq Q$ and $\bdp \in \Z^4$. Thus, 
\begin{equation*}
    N_{\bbP^1_{\C}}(Q,0) \gg \sum_{u = 1}^{\floor{\sqrt{Q}}} u^2 \gg Q^{3/2}. 
\end{equation*}
If $\fke(n, 2) = 2$ were to hold, then this would imply $N_{\bbP^1_{\C}}(Q,0) \ll Q$, which contradicts the above bound.
\end{example}

We can also consider Gaussian rationals lying inside the complex parabola. 

\begin{example}[Complex parabola] Let $Q \in \N$ be large. If $a$, $b$, $u$, $v \in \Z$ satisfy $1 \leq a < u \leq \sqrt{Q/2}$ and $1 \leq b < v \leq \sqrt{Q/2}$, then
\begin{equation*}
 z := \frac{a+ib}{u + iv} \quad \textrm{satisfies} \quad (z, z^2) \in \bbP^1_{\C} \quad \textrm{and} \quad (z, z^2) = \frac{\bdb}{q} 
\end{equation*}
where $q \in \Z[i]$ satisfies $0 < |q|_{\infty} \leq Q$ and $\bdb \in \Z[i]^2$. Thus, 
\begin{equation*}
    N_{\bbP^1_{\C}}^\C(Q,0) \geq \sum_{v = 1}^{\floor{\sqrt{Q/2}}} \sum_{u = 1}^{\floor{\sqrt{Q/2}}} uv \gg Q^2. 
\end{equation*}
This example suggests that it may be possible to improve the $Q^3$ power to $Q^2$ in the $m = 1$ case of Theorem~\ref{thm: main cplx}. 
\end{example}

\subsection*{Structure of the paper} In \S\ref{sec: prelim} we introduce some notation and standard technical tools used throughout the paper. In \S\ref{sec: int dim} we prove the refined estimate in Theorem~\ref{thm: refined count}, relying heavily on earlier results and methods from \cite{Srivastava2025}. In \S\ref{sec: examples}, we present the proof of Lemma~\ref{lem: ND vs CND} and provide further details of the complex sphere example discussed in \S\ref{subsec: codim 2 conj} and \S\ref{subsec: Gaussian}. Finally, in \S\ref{sec: Guassian count} we prove Theorem~\ref{thm: main cplx} using Huang's Fourier analytic method \cite{Huang2020} and modifications thereof from \cite{Srivastava2025}. Appended is a proof of a simple summation-by-parts lemma which features in our arguments.

\subsection*{Acknowledgements} The first author is supported by New Investigator Award UKRI097. He wishes to thank Damaris Schindler for an interesting talk on related topics at the Hausdorff Institute in Bonn, April 2025, which partly inspired this work. The second author was partially supported by the Deutsche Forschungsgemeinschaft (DFG, German Research Foundation) under Germany's Excellence Strategy-EXC-2047/1-390685813 and an Argelander grant from the University of Bonn.

%%%%%%%%%%%%%%%%%%%%%%%%%%%%%%%%%%%%%%%%%%%%%%%%%%%%%%%%%%%%%%%%%%%%%%%%%%%%%%%%%%%%%%%%%%%%%%%%

%       Notation and technical tools

%%%%%%%%%%%%%%%%%%%%%%%%%%%%%%%%%%%%%%%%%%%%%%%%%%%%%%%%%%%%%%%%%%%%%%%%%%%%%%%%%%%%%%%%%%%%%%%%

\section{Notation and technical tools}
\label{sec: prelim}
We summarise notation and technical tools used throughout the paper.

%%%%%%%%%%%%%%%%%%%%%%%%%%%%%%%%%%%%%%%%%%%%%%%%%%%%%%%%%%%%%%%%%%%%%%%%%%%%%%%%%%%%%%%%%%%%%%%%

%       Notational conventions

%%%%%%%%%%%%%%%%%%%%%%%%%%%%%%%%%%%%%%%%%%%%%%%%%%%%%%%%%%%%%%%%%%%%%%%%%%%%%%%%%%%%%%%%%%%%%%%%

\subsection{Notational conventions} 

\begin{itemize}
    \item Given $d \in \N$ and $\bdx = (x_1, \dots, x_d) \in \R^d$, we let $|\bdx|_{\infty} := \max \{|x_1|, \dots, |x_d|\}$. 
    \item We let $e \colon \R \to \C$ denote the Fourier character $e(x) := e^{2 \pi i x}$ for $x \in \R$. 
    \item We denote by $\|x\| := \min\{|x - z| : z \in \Z\}$ the distance of $x \in \R$ to the nearest integer.
    \item Given a set $E \subseteq \R$, we let $\mathbbm{1}_E \colon \R \to \{0,1\}$ denote its characteristic function.
\end{itemize}

%%%%%%%%%%%%%%%%%%%%%%%%%%%%%%%%%%%%%%%%%%%%%%%%%%%%%%%%%%%%%%%%%%%%%%%%%%%%%%%%%%%%%%%%%%%%%%%%

%       The Fejer kernel

%%%%%%%%%%%%%%%%%%%%%%%%%%%%%%%%%%%%%%%%%%%%%%%%%%%%%%%%%%%%%%%%%%%%%%%%%%%%%%%%%%%%%%%%%%%%%%%%

\subsection{The Fej\'er kernel} For $D\in \N$, let $\cF_D \colon \R \to [0,\infty)$ denote the Fej\'er kernel of degree $D$, given by
\begin{equation*}
    \cF_D(\theta)= \sum_{d=-D}^{D}\frac{D-|d|}{D^2} e(d\theta)=\left(\frac{\sin(\pi D\theta)}{D\sin(\pi\theta)}\right)^2.%=\frac{1}{D^2}\left|\sum_{d=1}^De\left(d\theta\right)\right|
\end{equation*}
Suppose $D := \left\lfloor \frac{1}{2\delta}\right\rfloor$ for some $\delta \in (0, 1/2)$. Since $|\sin (\pi x)|\geq 2x$ for $x \in [0, 1/2]$, we have
\begin{equation*}
    \left(\frac{\sin(\pi D\theta)}{D\sin(\pi\theta)}\right)^2\geq \left(\frac{2 D\|\theta\|}{D\pi \|\theta\|}\right)^2=\frac{4}{\pi^2}
\end{equation*}
whenever $\|\theta\|\in [0, \delta]$.
In other words, 
\begin{equation}
    \label{eq fejer char est}
    \mathbbm{1}_{\{x\in \R: \|x\|\leq \delta\}}(\theta)\leq \frac{\pi^2}{4} \cF_D(\theta) \qquad \textrm{for all $\theta \in \R$.}
\end{equation}

%%%%%%%%%%%%%%%%%%%%%%%%%%%%%%%%%%%%%%%%%%%%%%%%%%%%%%%%%%%%%%%%%%%%%%%%%%%%%%%%%%%%%%%%%%%%%%%%

%       Stationary phase

%%%%%%%%%%%%%%%%%%%%%%%%%%%%%%%%%%%%%%%%%%%%%%%%%%%%%%%%%%%%%%%%%%%%%%%%%%%%%%%%%%%%%%%%%%%%%%%%

\subsection{Stationary phase}

For $n$, $d \in \N$, let $Y \subseteq \R^n$ and $Z \subseteq \R^d$ be open sets of diameter at most $1$ and $\phi \colon Y \times Z \to \R$ be a smooth function. Suppose there exists a smooth function $\bdy \colon Z \to Y$ such that for each $\bdz \in Z$, the point $\bdy = \bdy(\bdz)$ is the unique solution to 
\begin{equation*}
    \nabla_{\bdy} \phi(\bdy; \bdz) = 0 \qquad \textrm{for $\bdy \in Y$.}
\end{equation*}
Further suppose there exists some constant $c > 0$ such that
\begin{equation}\label{eq: stationary phase 1}
    |\det (\partial_{\bdy \bdy}^2 \phi)(\bdy(\bdz), \bdz)| \geq c > 0 \qquad \textrm{for all $\bdz \in Z$.}
\end{equation}

Let $w \in C^{\infty}((0, \infty)\times\R^n \times \R^d)$ with $\supp w \subseteq (0, \infty) \times Y \times Z$ and suppose there exists a constant $C > 0$ such that
\begin{equation}\label{eq: stationary phase 2}
 \sup_{(\lambda;  \bdy; \bdz) \in (0, \infty) \times \R^n \times \R^d}|\partial^N_{\lambda}  \partial^{\bdalpha}_{\bdy}\partial^{\bdbeta}_{\bdz} w(\lambda; \bdy; \bdz)| \leq C (1 + \lambda)^{-N}
\end{equation}
for all $N \in \N_0$ and multi-indices $\bdalpha = (\alpha_1, \dots, \alpha_n)\in \N_0^n$, $\bdbeta = (\beta_1, \dots, \beta_d) \in \N_0^d$ satisfying $0 \leq N$, $\alpha_1 + \cdots + \alpha_n$, $\beta_1 + \cdots + \beta_d \leq 100n$. 

Define the oscillatory integral
\begin{equation*}
    I(\lambda; \bdz) := \int_{\R^n} e(\lambda \phi(\bdy; \bdz)) w(\lambda; \bdy; \bdz)\,\ud \bdy \qquad \textrm{for $(\lambda; \bdz) \in (0,\infty) \times Z$.} 
\end{equation*}
The following (standard) lemma provides a useful asymptotic for $I(\lambda; \bdz)$.

\begin{lemma}
[Variable coefficient stationary phase]
\label{lem: stationary phase} With the above setup,
\begin{equation*}
    I(\lambda; \bdz) = \lambda^{-n/2} e\big(\lambda \phi(\bdy(\bdz); \bdz)\big)a(\lambda;\bdz) \qquad \textrm{for all $(\lambda; \bdz) \in (0, \infty) \times Z$}
\end{equation*}
where $a \in C^{\infty}((0, \infty) \times \R^d)$ satisfies
\begin{equation*}
\sup_{(\lambda; \bdz) \in (0, \infty) \times \R^d} |\partial_{\lambda}^{\iota} \partial_{\bdz}^{\bdbeta} a(\lambda; \bdz)| \ll_{c, C} \lambda^{-\iota} \qquad \textrm{for $\iota \in \{0,1\}$ and $\bdbeta \in \{0,1\}^d$.} 
\end{equation*}
The implicit constant depends only on $n$, $d$ and $c$ from \eqref{eq: stationary phase 1} and $C$ from \eqref{eq: stationary phase 2}. 
\end{lemma}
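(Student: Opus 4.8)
The plan is to prove the variable-coefficient stationary phase asymptotic by reducing to the classical stationary phase expansion (Hörmander-type) after a change of variables that straightens the critical point. First I would set $\psi(\bdz) := \phi(\bdy(\bdz); \bdz)$ for the critical value and aim to write
\[
I(\lambda; \bdz) = \lambda^{-n/2} e(\lambda \psi(\bdz)) a(\lambda; \bdz),
\]
so $a(\lambda; \bdz) = \lambda^{n/2} e(-\lambda \psi(\bdz)) I(\lambda; \bdz)$; the whole content is then the smoothness of $a$ in $(\lambda, \bdz)$ together with the bounds $|\partial_\lambda^\iota \partial_{\bdz}^{\bdbeta} a| \ll \lambda^{-\iota}$ for $\iota \in \{0,1\}$, $\bdbeta \in \{0,1\}^d$. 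For fixed $\bdz$, translating $\bdy \mapsto \bdy + \bdy(\bdz)$ and subtracting the critical value brings the phase to $\Phi(\bdy; \bdz) := \phi(\bdy + \bdy(\bdz); \bdz) - \psi(\bdz)$, which has a nondegenerate critical point at $\bdy = 0$ with Hessian $H(\bdz) := (\partial^2_{\bdy\bdy}\phi)(\bdy(\bdz); \bdz)$ satisfying $|\det H(\bdz)| \geq c$ by \eqref{eq: stationary phase 1}. The amplitude becomes $\tilde w(\lambda; \bdy; \bdz) := w(\lambda; \bdy + \bdy(\bdz); \bdz)$, still compactly supported in $\bdy$ uniformly in $\bdz$, still satisfying the symbol bounds \eqref{eq: stationary phase 2} (with constants adjusted by derivatives of $\bdy(\bdz)$, which are controlled since $\bdy(\bdz)$ is smooth by hypothesis and implicitly on a precompact set).

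Next I would invoke the standard stationary phase lemma with a nondegenerate critical point (e.g. Hörmander, \emph{The Analysis of Linear Partial Differential Operators I}, Thm.~7.7.5, or Stein's \emph{Harmonic Analysis}, Ch.~VIII): for a smooth amplitude $\tilde w$ supported near $\bdy = 0$ and a phase $\Phi$ with $\Phi(0;\bdz) = 0$, $\nabla_{\bdy}\Phi(0;\bdz)=0$, and Hessian $H(\bdz)$, one has
\[
\int_{\R^n} e(\lambda \Phi(\bdy;\bdz)) \tilde w(\lambda; \bdy; \bdz)\, \ud\bdy = \lambda^{-n/2} a(\lambda; \bdz),
\]
where the leading term is $\frac{e(\tfrac18 \operatorname{sgn} H(\bdz))}{|\det H(\bdz)|^{1/2}} \tilde w(\lambda; 0; \bdz)$ plus lower-order terms, each obtained by applying a constant-coefficient differential operator (built from $H(\bdz)^{-1}$) to $\tilde w$ evaluated at $\bdy = 0$, together with a remainder controlled by finitely many $\bdy$-derivatives of $\tilde w$. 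Since $\operatorname{sgn} H(\bdz)$ is locally constant (and $Z$ may be taken connected, or one argues componentwise), the phase factor $e(\tfrac18 \operatorname{sgn} H)$ is smooth — indeed constant — in $\bdz$. The key point is that every ingredient of this expansion — $H(\bdz)^{-1}$, $|\det H(\bdz)|^{-1/2}$, and the $\bdz$-derivatives of $\tilde w$ transferred through the translation by $\bdy(\bdz)$ — is smooth in $\bdz$ with bounds depending only on $n$, $d$, $c$ and $C$; this yields smoothness of $a$ in $\bdz$ and the bound $|\partial_{\bdz}^{\bdbeta} a| \ll_{c,C} 1$ for $\bdbeta \in \{0,1\}^d$.

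For the $\lambda$-dependence, I would note that $\lambda$ enters $a$ in two ways: through the explicit stationary phase expansion (where each successive term carries an extra factor $\lambda^{-1}$, so differentiating in $\lambda$ gains a $\lambda^{-1}$ on the main term too, and the remainder is already $O(\lambda^{-1})$ after extracting $\lambda^{-n/2}$ — in fact taking the expansion to one term plus remainder suffices for the claimed $\iota \in \{0,1\}$), and through the amplitude $w(\lambda; \bdy; \bdz)$ itself, whose $\lambda$-derivatives satisfy $|\partial_\lambda^N w| \leq C(1+\lambda)^{-N}$ by \eqref{eq: stationary phase 2}. One $\lambda$-derivative of $a$ therefore produces terms each of which is $O(\lambda^{-1})$: either from $\partial_\lambda$ hitting a lower-order stationary-phase term, or from $\partial_\lambda$ hitting $\tilde w$ (gaining $(1+\lambda)^{-1} \ll \lambda^{-1}$ on the support of $w$, noting $\lambda \geq$ some positive constant there or else the estimate is trivial). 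Crucially there is no contribution from $\partial_\lambda$ acting on the oscillatory factor $e(\lambda\Phi)$ because we have already extracted the critical value $e(\lambda\psi(\bdz))$ outside, and $\Phi(0;\bdz) = 0$.

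The main obstacle, and the part requiring the most care, is keeping careful track of how the translation $\bdy \mapsto \bdy + \bdy(\bdz)$ interacts with the $\bdz$-derivatives: each $\partial_{z_j}$ falling on $\tilde w(\lambda;\bdy;\bdz) = w(\lambda; \bdy + \bdy(\bdz); \bdz)$ produces both a direct $\partial_{z_j} w$ term and a chain-rule term $\partial_{\bdy} w \cdot \partial_{z_j}\bdy(\bdz)$, so the symbol bounds \eqref{eq: stationary phase 2} must be invoked with enough room ($100n$ derivatives, comfortably more than needed for $\bdbeta \in \{0,1\}^d$ and the $O(n)$ derivatives consumed by the stationary-phase remainder). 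One also needs that $\bdy(\bdz)$ and its derivatives are bounded on $Z$; this is where the hypotheses that $Y$, $Z$ have diameter at most $1$ and that $\bdy \colon Z \to Y$ is smooth are used — strictly one should note that the bounds on derivatives of $\bdy(\bdz)$ follow from the implicit function theorem applied to $\nabla_{\bdy}\phi = 0$ using \eqref{eq: stationary phase 1}, so they too are controlled by $c$ and the ($C^\infty$) bounds on $\phi$, consistent with the stated dependence of the implicit constant. Everything else is the routine machinery of the stationary phase method, which I would not reproduce in detail but cite.
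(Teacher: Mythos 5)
Your proposal is correct and follows essentially the same route as the paper, which simply cites the standard variable-coefficient stationary phase result (Sogge, Corollary~1.1.8) ``with minor modifications'' and notes that the dependence of the constants on $c$ and $C$ can be read off from the proof. You in effect supply those modifications explicitly --- translating by $\bdy(\bdz)$ to fix the critical point, invoking the classical expansion with nondegenerate Hessian, and bookkeeping the $\bdz$- and $\lambda$-derivatives (including the observation that the factor brought down by $\partial_\lambda$ from the exponential vanishes at the critical point after the critical value is extracted) --- which is at least as complete as the paper's own argument.
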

\begin{proof}
    This follows, for example, from \cite[Corollary 1.1.8]{Sogge_book}, with some minor modifications. The dependence of the implied constant is not explicitly discussed in the reference, but can easily be deduced by examining the proof. The requirement that we bound derivatives up to order $100n$ in \eqref{eq: stationary phase 1} is to ensure this dependence is somewhat of an overkill, but nevertheless suffices for our purposes.
\end{proof}

\subsection{Summation-by-parts}\label{subsec: sum by parts} Given $\lambda \geq 1$, let $\fkJ^n(\lambda)$ denote the collection of all axis-parallel rectangles $\bdJ := J_1 \times \cdots \times J_n$ where $J_1, \dots, J_n \subseteq [0,\lambda]$ are closed intervals with integer endpoints. For $\bdJ \in \fkJ^n(\lambda)$ and $B > 0$, we define the symbol class $\fkA(\lambda; \bdJ; B)$ as the collection of all functions $\fka \in C^{\infty}(\bdJ)$ which satisfy
\begin{equation}
    \label{eq sbp wt cond}
    \sum_{\alpha \in \{0,1\}^n} \lambda^{|\alpha|}\sup_{\bdu \in\bdJ}|\partial_{\bdu}^{\alpha}\fka(\bdu)| \leq B.
\end{equation}
For $\fka \in \fkA(\lambda; \bdJ; B)$ and $\bdx \in \R^n$, we define the exponential sum 
\begin{equation*}
    S_{\fka}(\bdx, \bdJ) := \sum_{\bdj \in \bdJ \cap \N_0^n} \fka(\bdj) e(\bdj \cdot \bdx).
\end{equation*}
With this setup, we record the following useful summation-by-parts lemma.

\begin{lemma}\label{lem: summation by parts} Let $\fka \in \fkA(\lambda; \bdJ; B)$ for some $\lambda \geq 1$, $\bdJ \in \fkJ^n(\lambda)$ and $B > 0$. Then
\begin{equation*}
    |S_{\fka}(\bdx, \bdJ)| \leq B \prod_{s = 1}^n \min\Big\{\lambda, \frac{1}{\|x_s\|}\Big\} \qquad \textrm{for all $\bdx = (x_1, \dots, x_n) \in \bdJ$.}
\end{equation*} 
\end{lemma}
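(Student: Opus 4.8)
The plan is to prove Lemma~\ref{lem: summation by parts} by iterated (dimension-by-dimension) summation-by-parts, reducing the $n$-dimensional estimate to $n$ applications of the one-variable Abel summation argument. First I would record the one-dimensional statement: if $\fkb \colon J \cap \N_0 \to \C$ is any function supported on an interval $J \subseteq [0,\lambda]$ with integer endpoints, and $|\sum_{j \in J \cap \N_0, j \le M} \fkb(j)| \le A$ for every $M$ (a uniform bound on partial sums), then for $x \in \R$ one has $|\sum_{j \in J \cap \N_0} \fkb(j) e(jx)| \le A \cdot \min\{ (\text{length of } J) + 1, \, \tfrac{1}{2\|x\|}\}$ or some such; combined with the trivial bound this gives the factor $\min\{\lambda, 1/\|x_s\|\}$ up to the harmless normalisation constants. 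The key point of this one-variable lemma is the standard Abel/partial-summation identity $\sum_{j} \fkb(j) e(jx) = \sum_j B(j)\big(e(jx) - e((j+1)x)\big) + (\text{boundary term})$ where $B(j) = \sum_{k \le j} \fkb(k)$, together with $|e(jx) - e((j+1)x)| = 2|\sin(\pi x)| \gg \|x\|$ and telescoping of the geometric sum, yielding the $1/\|x\|$ gain when $x \notin \Z$.

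Next I would set up the induction on $n$. Fix $\bdx = (x_1,\dots,x_n) \in \bdJ$ and write $\bdj = (j_1, \bdj')$ with $\bdj' = (j_2,\dots,j_n)$. Viewing the sum over $j_1$ as the outer sum, apply the one-dimensional lemma in the variable $j_1$ to the coefficient sequence $j_1 \mapsto \fka(j_1, \bdj')$ for each fixed $\bdj'$. To run Abel summation cleanly I would instead differentiate the telescoping the other way: write $S_{\fka}(\bdx,\bdJ) = \sum_{\bdj' } e(\bdj' \cdot \bdx') \sum_{j_1 \in J_1 \cap \N_0} \fka(j_1, \bdj') e(j_1 x_1)$ and apply summation-by-parts in $j_1$. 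This produces a main term in which the partial-summation operator has been applied, leaving a new inner coefficient that is the \emph{discrete partial difference} $\fka(j_1 + 1, \bdj') - \fka(j_1, \bdj')$ (times the telescoped geometric factor bounded by $\min\{|J_1|, 1/\|x_1\|\}$), plus a boundary term involving $\fka$ itself at the right endpoint of $J_1$. The crucial bookkeeping observation is that the derivative bound \eqref{eq sbp wt cond} controls both: the boundary term inherits a coefficient bounded by $\sup |\fka|$, and the difference term gains a factor $\lambda^{-1}$ against a discrete derivative, which by the mean value theorem is bounded by $\sup|\partial_{u_1}\fka|$, so $\lambda \cdot |\fka(j_1+1,\bdj') - \fka(j_1,\bdj')| \le \sup |\lambda\,\partial_{u_1}\fka|$. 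In either case, after peeling off the $j_1$-factor $\min\{\lambda, 1/\|x_1\|\}$, the remaining $(n-1)$-dimensional sum has coefficients lying in the symbol class $\fkA(\lambda; \bdJ'; B')$ with $\bdJ' = J_2 \times \cdots \times J_n$ and with the \emph{same} budget $B$ being split between the boundary and difference pieces — precisely because $\sum_{\alpha \in \{0,1\}^n} \lambda^{|\alpha|}\sup|\partial^\alpha \fka|$ decomposes as a sum over whether $\alpha_1 = 0$ or $1$, matching the two output terms. Then I would invoke the inductive hypothesis on each of these $(n-1)$-dimensional sums to obtain the product $\prod_{s=2}^n \min\{\lambda, 1/\|x_s\|\}$, and the two contributions recombine to give total constant $\le B$.

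The base case $n=0$ (or $n=1$) is the one-variable lemma above, and the induction then closes. The main obstacle — really the only subtle point — is the bookkeeping of the constant: one must verify that at each stage the symbol-norm budget $B$ is not multiplied by a growing constant but is merely redistributed among the boundary and difference terms, so that no factor of $2^n$ or $C^n$ creeps in. This works because the defining sum in \eqref{eq sbp wt cond} is over $\alpha \in \{0,1\}^n$ and splitting off the first coordinate partitions this index set into the $\alpha_1 = 0$ part (feeding the boundary term, with its $\fka$-coefficient) and the $\alpha_1 = 1$ part (feeding the difference term, with its $\lambda\,\partial_{u_1}\fka$-coefficient), and each of these restricted sums is exactly the symbol-norm over $\{0,1\}^{n-1}$ of the corresponding output coefficient. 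One minor technical care is needed to ensure the discrete difference of $\fka$ is itself a legitimate symbol of the required regularity (its own $\{0,1\}^{n-1}$-derivatives are bounded, via the mean value theorem applied coordinatewise, by the corresponding derivatives of $\fka$), but this is routine. I would relegate the full induction to the appendix, as the excerpt indicates.
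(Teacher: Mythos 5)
Your proposal is essentially the paper's proof: induction on the dimension, peeling off one coordinate, one-variable Abel summation in that coordinate producing a boundary term and a discrete-difference term, the mean value theorem converting the difference into a $\lambda^{-1}$ gain against the $\alpha_1=1$ part of the symbol norm (with the $\leq\lambda$ terms of the resulting sum absorbing that $\lambda^{-1}$), and the $\alpha_1=0$ part feeding the boundary term --- exactly the bookkeeping in the appendix, up to the immaterial choice of peeling the first rather than the last coordinate. One caveat: your preliminary one-dimensional lemma is mis-stated. A uniform bound $A$ on the partial sums of the coefficient sequence $\fkb$ does \emph{not} yield $|\sum_j \fkb(j)e(jx)| \leq A\min\{|J|,1/\|x\|\}$ (take $\fkb(j)=(-1)^j$ and $x=1/2$), and the Abel identity as you display it, with $B(j)=\sum_{k\le j}\fkb(k)$ against $e(jx)-e((j+1)x)$, sums by parts in the wrong direction: each term is $O(A\|x\|)$ but there are up to $\lambda$ of them, so no $1/\|x\|$ gain results. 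The correct one-variable input --- which is in fact what you use in your induction step --- puts the geometric partial sums $S(x,k)=\sum_{j\le k}e(jx)$, bounded by $\min\{k,1/(2\|x\|)\}$, against the \emph{differences} of the smooth coefficient, whose size is controlled by the symbol-class bound (bounded variation), not by bounded partial sums. Since your base case can be taken to be $n=0$ (trivial) and the inductive step as you describe it uses the correct form of summation by parts, the argument closes as in the paper; just discard or restate the opening one-dimensional claim.
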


The proof of Lemma~\ref{lem: summation by parts} is elementary. We include the details in the appendix for completeness.

%%%%%%%%%%%%%%%%%%%%%%%%%%%%%%%%%%%%%%%%%%%%%%%%%%%%%%%%%%%%%%%%%%%%%%%%%%%%%%%%%%%%%%%%%%%%%%%%

%   Manifolds of intermediate dimension: Proof of Theorem \ref{thm: aniso refined}

%%%%%%%%%%%%%%%%%%%%%%%%%%%%%%%%%%%%%%%%%%%%%%%%%%%%%%%%%%%%%%%%%%%%%%%%%%%%%%%%%%%%%%%%%%%%%%%%

\section{Manifolds of intermediate dimension: Proof of Theorem \ref{thm: aniso refined}}\label{sec: int dim}

%%%%%%%%%%%%%%%%%%%%%%%%%%%%%%%%%%%%%%%%%%%%%%%%%%%%%%%%%%%%%%%%%%%%%%%%%%%%%%%%%%%%%%%%%%%%%%%%

%  A non-isotropic generalisation

%%%%%%%%%%%%%%%%%%%%%%%%%%%%%%%%%%%%%%%%%%%%%%%%%%%%%%%%%%%%%%%%%%%%%%%%%%%%%%%%%%%%%%%%%%%%%%%%

\subsection{A non-isotropic generalisation} As in \cite{Srivastava2025}, we shall in fact establish a more general version of Theorem \ref{thm: refined count} for the number of rational points with bounded denominators contained in a non-isotropic neighborhood of $\cM$. It is not hard to see that under the parametrization \eqref{eq: real graph}, the isotropic counting function $N_{\cM}(Q, \delta)$ can be dominated by a constant times
\begin{equation*}
    \#\big\{ (\bda, q) \in \Z^{n+1} : 1 \leq q \leq Q, \, \bda/q\in U,\, \|qf_r(\bda/q)\| \leq \delta/q,\, 1\leq r\leq R \big\}.
\end{equation*}
For $Q\in \N$ and $\bddelta=(\delta_1, \ldots, \delta_R)\in (0, 1/4)^R$, we define the non-isotropic counting function
\begin{equation*}
  \cN_{\cM}(Q, \bddelta):=\#\left\{(\ba, q)\in \mathbb{Z}^{n+1}: 1\leq q\leq Q,\, \bda/q\in U,\, \left\|qf_r(\ba/q)\right\|\leq \delta_r/q \text{ for }1\leq r\leq R\right\}.  
\end{equation*}
When $\delta_1=\cdots=\delta_R=\delta$, we shall simply refer the above as $\cN_{\cM}(Q, \delta)$.

Let
\begin{equation*}
    \delp:= \prod_{r=1}^R\delta_r \qquad \textrm{and} \qquad \bddelta^{\wedge} := \min_{1\leq r \leq R} \delta_r.
\end{equation*}
Theorem~\ref{thm: refined count} is then a consequence of the following estimate for rational points in non-isotropic neighbourhoods of $\cM$, which refines \cite[Theorem 1.9]{Srivastava2025}.

\begin{theorem}
    \label{thm: aniso refined}
    For an admissible pair $(n,R)$, let $\cM$ be a codimension $R$ submanifold  of $\R^{n+R}$ satisfying (CC). For all $\nu>0$, the bound
\begin{equation}\label{eq: aniso refined}
    N_{\cM}(Q, \bddelta) \ll_\nu \delp Q^{n+1} + \bddelta^{\times}(\bddelta^{\wedge})^{-R} Q^{n + 1 - \frac{(n+2)R}{n+2R} + \nu}
\end{equation}
holds for all $Q \in \N$ and $\bddelta \in (0,1/4)^R$.
\end{theorem}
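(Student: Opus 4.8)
The plan is to follow the Fourier-analytic strategy of Huang, as extended to higher codimension in \cite{Srivastava2025}, but with a more efficient treatment of the error terms produced by repeated stationary phase. First I would reduce matters to bounding the non-isotropic count $\cN_{\cM}(Q,\bddelta)$ attached to a single graph patch $\{(\bdx, f(\bdx)) : \bdx \in X\}$ with $f \colon U \to \R^R$ satisfying (CC), as in the discussion preceding the theorem. Then I would majorise the arithmetic condition $\|q f_r(\ba/q)\| \leq \delta_r/q$ using the Fej\'er kernel bound \eqref{eq fejer char est}: with $D_r := \lfloor q/(2\delta_r)\rfloor$, the relevant indicator is dominated by a constant multiple of $\cF_{D_r}\big(q f_r(\ba/q)\big)$, and expanding each Fej\'er kernel into its Fourier series writes $\cN_{\cM}(Q,\bddelta)$ as a sum over frequency vectors $\bk = (k_1,\dots,k_R) \in \Z^R$ of exponential sums in $(\ba,q)$. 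The contribution of $\bk = \bdzero$ produces, upon summing in $\ba$ and $q$, the main term $\delp Q^{n+1}$, and the whole task is to control the sum over $\bk \neq \bdzero$.

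For the nonzero frequencies I would dyadically decompose the range of $q$, localise $\ba/q$ to a small cube inside $X$ by a smooth partition of unity, and apply Poisson summation in $\ba$ to the sum $\sum_{\ba} e\big(q\,\bk \cdot f(\ba/q)\big)$, converting it into a sum of oscillatory integrals. The phase has Hessian $q^{-1}\big(\sum_{r} k_r f_r''\big)$, which by (CC) — after normalising $\bk/|\bk| \in \bbS^{R-1}$ — has determinant bounded away from zero; hence Lemma~\ref{lem: stationary phase} applies with $\lambda \sim |\bk| Q$ and yields a leading contribution carrying the Legendre-transformed phase of $\bk \cdot f$ (the projective dual), multiplied by an amplitude obeying the symbol bounds of that lemma. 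This is precisely the mechanism by which \cite{Srivastava2025} relates the count near $\cM$ to a count near the dual manifold with rescaled parameters, and iterating it through the codimension produces the bound with exponent $\tfrac{n^2 R}{n^2 + 2(R-1)n - 4}$ there.

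The refinement — and the source of the sharper exponent $\tfrac{(n+2)R}{n+2R}$ valid for all $R$ — is to avoid estimating the stationary-phase error (equivalently, the lower-order terms of the asymptotic expansion used in \cite{Srivastava2025}) trivially at each stage. The point is that these terms are again exponential sums in the remaining discrete variables, with amplitudes lying in a symbol class of the type \eqref{eq sbp wt cond}; one may therefore invoke the summation-by-parts bound of Lemma~\ref{lem: summation by parts} to extract the extra decay $\prod_{s} \min\{\lambda, \|x_s\|^{-1}\}$ in the appropriate (dual) variables. Summing these gains over the dyadic ranges of $q$ and over $\bk$, combined with the orthogonality / Cauchy--Schwarz input already present in \cite{Srivastava2025}, upgrades the wasteful power loss in the error term to the bound claimed in \eqref{eq: aniso refined}, while leaving the main term $\delp Q^{n+1}$ untouched; the factor $\bddelta^{\times}(\bddelta^{\wedge})^{-R}$ emerges from the Fej\'er truncation parameters $D_r$ and the successive rescalings.

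The step I expect to be the main obstacle is the parameter bookkeeping through the iteration: one must track exactly how $Q$, the widths $\bddelta$, the localisation scales, and the curvature lower bound transform under each application of Poisson summation plus Lemma~\ref{lem: stationary phase}, and verify that inserting the Lemma~\ref{lem: summation by parts} gain at the right moment is genuinely compatible with the inductive scheme of \cite{Srivastava2025} — in particular that it does not interfere with the separate treatments of small denominators $q$, of points $\ba/q$ near $\partial X$, and of the off-diagonal frequency contributions. A secondary issue is maintaining uniformity of all implied constants in the curvature data as the induction descends in codimension. Given the quoted results and methods of \cite{Srivastava2025}, these points are technical but essentially mechanical.
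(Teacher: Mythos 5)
Your overall strategy is the paper's: reduce to a graph patch, majorise the arithmetic conditions with Fej\'er kernels, apply Poisson summation and stationary phase against the (CC)-nondegenerate Hessian $\sum_r k_r f_r''$, and --- the key refinement --- keep the \emph{exact} stationary-phase representation (oscillatory factor times a symbol-class amplitude) and feed the whole amplitude-weighted sum into the summation-by-parts bound of Lemma~\ref{lem: summation by parts}, instead of expanding into a main term plus trivially estimated errors. That is precisely how the paper gains over \cite{Srivastava2025}, so the central new idea is correctly identified.

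However, the part you defer as ``essentially mechanical'' is where the theorem is actually proved, and your description of it is off in two concrete ways. First, the iteration is not an induction ``through'' or ``descending in'' codimension, and no orthogonality or Cauchy--Schwarz step is involved: the mechanism is a fixed-codimension bootstrap between the primal count $\fkN_{\cM, w}(Q,\bddelta)$ and the Legendre-dual count $\fkN^*_{\cM, w}(Q_*,\delta_*)$ (which for $R \geq 2$ has a genuinely different shape from the primal one), i.e.\ two self-improving lemmas (primal bound $\Rightarrow$ dual bound, Lemma~\ref{lem: codim R self imp}; dual $\Rightarrow$ primal, Lemma~\ref{lem: codim R self imp *}), alternated $N(\nu) = O_\nu(1)$ times starting from the trivial bound, with a nested family of weights $w_k$ because each application enlarges the support slightly. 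Second, the exponent $\frac{(n+2)R}{n+2R}$ is not obtained by ``summing the gains'' in a single refined pass: it is the fixed point of the recursion $\alpha_{j+1} = n+R-\frac{n}{2\beta_j-n}$, $\beta_{j+1} = n+1-\frac{nR}{2\alpha_{j+1}-n}$, and the sole role of the summation-by-parts refinement is to widen the admissible hypothesis range in the primal-to-dual lemma to $\beta > \frac{n(n+R+1)}{n+2R}$, which is exactly what lets the recursion be run down to its fixed point (in \cite{Srivastava2025} the narrower range forces an earlier stop for $R \geq 3$). Moreover, each application of the lemmas requires the hypothesised bound for \emph{all} anisotropic $\bddelta$ and at all dyadic scales $\lambda \lesssim \delta_*^{-1}$ --- after summation-by-parts one splits according to the sizes of $\|d f_r(\bdk/d)\|$ and recognises the primal count at widths $2^{i_r}Q_*^{-1}$, which is where the factor $\bddelta^{\times}(\bddelta^{\wedge})^{-R}$ enters --- so the anisotropic formulation is structurally essential, not bookkeeping. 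As written, your sketch does not yet contain this self-improving structure or the verification that the recursion reaches the claimed exponent within $\nu$ in $O_\nu(1)$ steps with controlled constants and logarithmic powers, and that is the substantive gap.
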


Taking $\delta_1 = \cdots = \delta_R = \delta$, it follows that $\bddelta^{\times} = \delta^R$ and $\bddelta^{\times} (\bddelta^{\wedge})^{-R} = 1$. Thus, Theorem~\ref{thm: aniso refined} implies that,  for all $\nu > 0$, we have
\begin{equation*}
    N_{\cM}(Q, \delta) \ll_{\nu} \delta^R Q^{n+1} + Q^{n + 1 - \frac{(n+2)R}{n+2R} + \nu}.
\end{equation*}
Recalling the definition of the exponent $\fke(n,R)$, this directly implies Theorem~\ref{thm: refined count}.

\begin{remark}
    The shape of the estimate in Theorem~\ref{thm: aniso refined} is slightly different from that of its counterpart \cite[Theorem 1.9]{Srivastava2025}. For instance, the right-hand side of \eqref{eq: aniso refined} involves only two terms, whilst the right-hand side of the corresponding estimate in \cite[Theorem 1.9]{Srivastava2025} has three. The alternative form of the estimate in Theorem~\ref{thm: aniso refined} allows us to streamline certain aspects of the argument. 
\end{remark}

%%%%%%%%%%%%%%%%%%%%%%%%%%%%%%%%%%%%%%%%%%%%%%%%%%%%%%%%%%%%%%%%%%%%%%%%%%%%%%%%%%%%%%%%%%%%%%%%

%  Preliminaries

%%%%%%%%%%%%%%%%%%%%%%%%%%%%%%%%%%%%%%%%%%%%%%%%%%%%%%%%%%%%%%%%%%%%%%%%%%%%%%%%%%%%%%%%%%%%%%%%

\subsection{Preliminaries}\label{subsec: prelim} Suppose $\cM \subset \R^{n+R}$ is a smooth surface satisfying the curvature condition (CC). By working locally, we may assume 
\begin{equation*}
    \cM = \{(\bdx, f(\bdx)) : \bdx \in U \}
\end{equation*}
where $U \subset \R^n$ is a bounded, open set and $f = (f_1, \dots, f_R) \colon U \to \R^R$ is a smooth function satisfying \eqref{eq: CC}. Given $\bdtheta \in \R^R$, define
\begin{equation*}
    f_{\bdtheta} \colon \R^n \to \R, \qquad f_{\bdtheta}(\bdx) := f(\bdx) \cdot \bdtheta = \sum_{r=1}^R \theta_r f_r(\bdx). 
\end{equation*}
By a quantitative version of the inverse function theorem (see, for example, \cite[\S8]{Christ1985}), we may cover $U$ by small balls $B(\bdx_0, \tau^2)$ with $0 < \tau < 1$ such that for $\bdtheta \in \R^R$ satisfying $1 \leq |\bdtheta|_{\infty} \leq 2$ the following hold:
\begin{itemize}
    \item There exist constants $C_1$, $C_2 \geq 1$ depending only on $f$ such that
  \begin{equation}\label{eq: codim R ift 1}
  \begin{split}
      (\nabla_{\bdx} f_{\bdtheta})\bigl(B(\bdx_0, \tau^2)\bigr) &\subseteq B((\nabla_{\bdx} f_{\bdtheta})(\bdx_0), C_1 \tau^2) \\
      & \subseteq B((\nabla_{\bdx} f_{\bdtheta})(\bdx_0), 2 C_1 \tau^2) \subseteq 
(\nabla_{\bdx} f_{\bdtheta})\bigl(B(\bdx_0, C_2 \tau)\bigr);
  \end{split}
\end{equation}
\item $\nabla_{\bdx} f_{\bdtheta}$ is a diffeomorphism between $B(\bdx_0, C_2 \tau)$ and
$(\nabla_{\bdx} f_{\bdtheta})\bigl(B(\bdx_0, C_2 \tau)\bigr)$.
\end{itemize}

%%%%%%%%%%%%%%%%%%%%%%%%%%%%%%%%%%%%%%%%%%%%%%%%%%%%%%%%%%%%%%%%%%%%%%%%%%%%%%%%%%%%%%%%%%%%%%%%

%  Smooth counting function

%%%%%%%%%%%%%%%%%%%%%%%%%%%%%%%%%%%%%%%%%%%%%%%%%%%%%%%%%%%%%%%%%%%%%%%%%%%%%%%%%%%%%%%%%%%%%%%%

\subsubsection*{Smooth counting function} Henceforth, we fix one of the balls $B(\bdx_0, \tau^2)$ as above and let $\varepsilon := \tau^2/2$. Using a smooth partition of unity, we can reduce matters to studying the following smooth variant of $\cN_{\cM}(Q, \bddelta)$.  We let $\cW_{\cM}$ denote the set of all smooth weights $w \colon \R^n\to [0, 1]$ satisfying $\supp w \subseteq B(\bdx_0, 2\varepsilon)$. For $w \in \cW_{\cM}$, given $Q \in \N$ and $\bddelta = (\delta_1, \dots, \delta_R) \in (0,1/4)^R$, we define 
\begin{equation}\label{eq: codim R smooth count}
    \fkN_{\cM, w}(Q, \bddelta) := \sum_{q = 1}^Q \sum_{\substack{\bda\in \Z^n \\ \|qf_r(\bda/q)\|\le \delta_r \\ 1 \leq r \leq R}} w(\bda/q).
\end{equation}
If $\bddelta = (\delta, \dots, \delta)$ for some $\delta \in (0, 1/4)$, then we simply write $\fkN_{\cM, w}(Q, \delta)$ for $\fkN_{\cM, w}(Q, \bddelta)$. With this notation, to prove Theorem~\ref{thm: aniso refined}, our goal is to show that for all $\nu > 0$ the inequality
\begin{equation*}
\fkN_{\cM, w}(Q, \delta)  \ll_\nu \delp Q^{n+1} + \bddelta^{\times}(\bddelta^{\wedge})^{-R} Q^{n + 1 - \frac{(n+2)R}{n+2R} + \nu}
\end{equation*}
holds, at least for a suitable choice of $w \in \cW_{\cM}$.

%%%%%%%%%%%%%%%%%%%%%%%%%%%%%%%%%%%%%%%%%%%%%%%%%%%%%%%%%%%%%%%%%%%%%%%%%%%%%%%%%%%%%%%%%%%%%%%%

%  Duality

%%%%%%%%%%%%%%%%%%%%%%%%%%%%%%%%%%%%%%%%%%%%%%%%%%%%%%%%%%%%%%%%%%%%%%%%%%%%%%%%%%%%%%%%%%%%%%%%

\subsubsection*{Duality} Given $\bdtheta \in \R^n$ satisfying $1 \leq |\bdtheta|_{\infty} \leq 2$, recall that $\nabla_{\bdx} f_{\bdtheta} \colon \cD \to \cD_{\bdtheta}^*$ is a diffeomorphism between 
\begin{equation*}
   \cD := B(\bdx_0, C_2 \tau) \qquad \textrm{and} \qquad \cD_{\bdtheta}^* := (\nabla_{\bdz} f_{\bdtheta})(B(\bdx_0, C_2 \tau)).
\end{equation*}
The Legendre transform $f_{\bdtheta}^{*} \colon \cD_{\bdtheta}^* \to \R$ of $f_{\bdtheta}$ is then defined by
\begin{equation*}
    f_{\bdtheta}^{*}(\bdy) := - f_{\bdtheta}(\bdx) + \bdx \cdot \bdy \qquad \textrm{where $\bdx \in \cD$ satisfies $\bdy = \nabla_{\bdx} f_{\bdtheta}(\bdx)$}
\end{equation*}
for $\bdy \in \cD_{\bdtheta}^*$. It is easy to check that
\begin{gather}\label{eq: L trans 1}
   \nabla_{\bdy} f_{\bdtheta}^{*} (\bdy) = (\nabla_{\bdx} f_{\bdtheta})^{-1}(\bdy); \qquad f_{\bdtheta}^{**}(\bdx) = f_{\bdtheta}(\bdx); \\
   \label{eq: L trans 2}
   (f_{\bdtheta}^{*})''(\bdy) = (f_{\bdtheta}'')^{-1} ((\nabla_{\bdx} f_{\bdtheta})^{-1}(\bdy)). 
\end{gather}
Hence, by our hypothesis \eqref{eq: CC} on $f$, we see that $f_{\bdtheta}^{*}$ also satisfies \eqref{eq: CC}, in the sense that $\det (f_{\bdtheta}^{*})''(\bdy) \neq 0$ for all $\bdy \in \cD_{\bdtheta}^*$. We set
\begin{equation*}
    w_{\bdtheta}^{*}(\bdy) := \frac{w \circ (\nabla_{\bdx} f_{\bdtheta})^{-1}(\bdy)}{|\det f''_{\bdtheta} \circ (\nabla_{\bdx} f_{\bdtheta})^{-1}(\bdy)|}.
\end{equation*}
Given $Q_*\in \N_0$ and $\delta_* \in (0,1/4)$, we then define the \textit{dual counting function}
\begin{equation}\label{eq: dual count}
\fkN_{\cM, w}^*(Q_*,\delta_*) := \sum_{\substack{\bdj \in \N_0^R \\
0 < |\bdj|_{\infty} \leq Q_*}} \;\; \sum_{\substack{\bda \in \Z^n \\ \||\bdj|_{\infty} f_{\bdj/|\bdj|_{\infty}}^{*}(\bda/|\bdj|_{\infty})\| \le \delta_{*}}} w_{\bdj/|\bdj|_{\infty}}^{*}(\bda/|\bdj|_{\infty}).
\end{equation}
Note that $\fkN_{\cM, w}^*(Q_*,\delta_*)$ has a rather different form compared to the original counting function $\fkN_{\cM, w}(Q,\bddelta)$.

%%%%%%%%%%%%%%%%%%%%%%%%%%%%%%%%%%%%%%%%%%%%%%%%%%%%%%%%%%%%%%%%%%%%%%%%%%%%%%%%%%%%%%%%%%%%%%%%

%  Weights

%%%%%%%%%%%%%%%%%%%%%%%%%%%%%%%%%%%%%%%%%%%%%%%%%%%%%%%%%%%%%%%%%%%%%%%%%%%%%%%%%%%%%%%%%%%%%%%%

\subsubsection*{Weights} For technical reasons, it is necessary to work with a sequence of weights with nested supports, rather than a single weight $w \in \cW_{\cM}$. This forces us to quantify the sizes of various derivatives of the weight. 

\begin{definition}\label{dfn: weight ext} For $\rho \geq 1$, we define $\cW_{\cM}(\rho)$ to be the collection of all $w \in \cW_{\cM}$ satisfying
\begin{equation*}
        \|\partial^{\bdalpha}w\|_{L^{\infty}(\R^n)} \leq \rho \quad \textrm{for all $\bdalpha\in \N_0^n$ with $0 \leq \alpha_1 + \cdots + \alpha_n \leq 100n$.} 
    \end{equation*}
\end{definition}

We shall frequently pass from one weight $w \in \cW_{\cM}$ to a weight $\tilde{w} \in \cW_{\cM}$ with slightly larger support. 

\begin{definition}
Given $w \in \cW_{\cM}$ and $0 < h \leq 1$, we let $\cW_{\cM}(w; h)$ denote the set of all weights $\widetilde{w} \in \cW_{\cM}$ for which there exist $0< \varepsilon_1 < \varepsilon_2 \leq 2\varepsilon$ with $\varepsilon_2 - \varepsilon_1 \geq h$  such that
\begin{equation*}
     \supp  w \subseteq B(\bdx_0, \varepsilon_1) \qquad \textrm{and} \qquad \widetilde{w}(\bdy) = 1 \quad \textrm{for all $\bdy \in B(\bdx_0, \varepsilon_2)$.} 
\end{equation*}
\end{definition}

%%%%%%%%%%%%%%%%%%%%%%%%%%%%%%%%%%%%%%%%%%%%%%%%%%%%%%%%%%%%%%%%%%%%%%%%%%%%%%%%%%%%%%%%%%%%%%%%

%  Self-improving mechanism

%%%%%%%%%%%%%%%%%%%%%%%%%%%%%%%%%%%%%%%%%%%%%%%%%%%%%%%%%%%%%%%%%%%%%%%%%%%%%%%%%%%%%%%%%%%%%%%%

\subsection{Self-improving mechanism}

Theorem~\ref{thm: aniso refined} is a direct consequence of the following two lemmas, which together form a bootstrapping or \textit{self-improving mechanism} for estimating our counting functions.

\begin{lemma}\label{lem: codim R self imp} Let $0 < h \leq 1$ and $\rho \geq 1$ and suppose $w \in \cW_{\cM}(\rho)$ and $\widetilde{w} \in \cW_{\cM}(w; h)$. Further suppose that for all $\bddelta \in (0, 1/4)^R$ the bound
\begin{equation}\label{eq: codim R hypothesis}
\fkN_{\cM, \widetilde{w}}(Q, \bddelta) \leq A \big(\bddelta^{\times}Q^{n+1} + \bddelta^{\times} (\bddelta^{\wedge})^{-R} Q^{\beta} \log^{\gamma} Q\big). 
\end{equation}
holds for some $\frac{n(n+R+1)}{n+2R} < \beta \le n+1$, $\gamma \geq 0$ and $A \geq 1$. Then, for all $\delta_* \in (0, 1/4)$, we have
\begin{equation}\label{eq: codim R improv}
\fkN_{\cM, w}^*(Q_*,\delta_*) \ll_{h, \rho, \beta}  A\big( \delta_* Q_*^{n+R} +  Q_*^{n + R - \frac{n}{2\beta - n}} \log^{\frac{2(R-1 + \gamma)}{2\beta - n}} Q_*\big).
\end{equation}
\end{lemma}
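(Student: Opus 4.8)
The plan is to estimate the dual count $\fkN^*_{\cM,w}(Q_*,\delta_*)$ by the Poisson summation and stationary phase scheme of Huang, in the codimension $R$ form developed in \cite{Srivastava2025}. First I would fix an auxiliary parameter $\delta_\sharp\in[\delta_*,1/4]$, to be optimised only at the very end; by monotonicity of $\fkN^*_{\cM,w}$ in its width parameter it suffices to bound $\fkN^*_{\cM,w}(Q_*,\delta_\sharp)$, and for this I would apply the Fej\'er majorant \eqref{eq fejer char est} of degree $D_\sharp:=\lfloor 1/(2\delta_\sharp)\rfloor$ to replace the nearest-integer indicator in \eqref{eq: dual count} by $\sum_{|k|\le D_\sharp}\frac{D_\sharp-|k|}{D_\sharp^2}\,e\bigl(k\lambda f^*_{\bdj/\lambda}(\bda/\lambda)\bigr)$, where $\lambda=|\bdj|_\infty$. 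Next I would apply Poisson summation in $\bda$, rescaling $\bda=\lambda\bdy$: the term with $k=0$ and frequency $\bdm=\bdzero$ furnishes the main contribution $\sim\delta_\sharp Q_*^{n+R}$ (using $\int w^*_{\bdtheta}\,\ud\bdy=O(1)$, which holds since $|\det f''_{\bdtheta}|\gtrsim 1$ by (CC)), the terms with $k=0$, $\bdm\ne\bdzero$ are negligible by the rapid decay of $\widehat{w^*_{\bdtheta}}$ (here $w\in\cW_\cM(\rho)$ enters), and there remains an oscillatory sum over $k\ne 0$, $\bdm\in\Z^n$.

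For this oscillatory sum I would first discard, by repeated integration by parts, all $\bdm$ with $\bdm/k$ outside a fixed neighbourhood of $\supp w$, and on the rest invoke the variable-coefficient stationary phase asymptotic of Lemma~\ref{lem: stationary phase} with large parameter $\lambda|k|$, verifying its hypotheses via the Legendre identities \eqref{eq: L trans 1}, \eqref{eq: L trans 2} and (CC), and the symbol bounds via $w\in\cW_\cM(\rho)$. Since the Legendre transform is an involution, the critical value of the phase returns the original graphing function; and as $f_{\bdtheta}=\sum_r\theta_r f_r$ is linear in $\bdtheta=\bdj/\lambda$, the phase becomes $e\bigl(-k\sum_{r=1}^R j_r f_r(\bdm/k)\bigr)$, linear in each $j_r$. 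I would then perform summation-by-parts in $\bdj$: after decomposing into dyadic ranges $|\bdj|_\infty\sim L$ and sub-ranges on which one coordinate is maximal (so the amplitude, carrying the $\lambda^{n/2}$ from Poisson and stationary phase, is smooth on a product of intervals), Lemma~\ref{lem: summation by parts} yields the bound $\ll Q_*^{n/2}\prod_{r=1}^R\min\{Q_*,\|kf_r(\bdm/k)\|^{-1}\}$ for the inner sum.

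Now I would dyadically decompose each factor $\min\{Q_*,\|kf_r(\bdm/k)\|^{-1}\}\sim 2^{l_r}$, and for fixed scales $(2^{l_1},\dots,2^{l_R})$ and $|k|\sim K\le D_\sharp$ recognise the number of admissible pairs $(k,\bdm)$ — after writing $q:=|k|$, $\bda:=\pm\bdm$ — as dominated by the primal count $\fkN_{\cM,\widetilde w}\bigl(2K,(\min\{2^{-l_1},1/4\},\dots,\min\{2^{-l_R},1/4\})\bigr)$; this is where the fattened weight $\widetilde w\in\cW_\cM(w;h)$ is used, the gap $h$ absorbing the uncertainty in the stationary-phase localisation. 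Inserting the hypothesis \eqref{eq: codim R hypothesis}, the factor $\bddelta^{\times}$ cancels the weight $\prod_r 2^{l_r}$; summing over the scales $(2^{l_r})$, over $|k|\sim K$ against the Fej\'er weights $\frac{D_\sharp-|k|}{D_\sharp^2}$, and over $L\le Q_*$, produces a bound of the form $\delta_\sharp Q_*^{n+R}+A\cdot(\log Q_*)^{O(1)}Q_*^{a}\delta_\sharp^{-b}$ with $a,b$ explicit in $n,R,\beta$. Finally I would choose $\delta_\sharp$ to balance the two terms: when $\delta_*$ lies above the balancing value one simply takes $\delta_\sharp=\delta_*$ and the first term dominates, while below it the balanced choice gives the $\delta_*$-free secondary term $Q_*^{\,n+R-\frac{n}{2\beta-n}}\log^{\frac{2(R-1+\gamma)}{2\beta-n}}Q_*$, the constraint $\beta>\tfrac{n(n+R+1)}{n+2R}$ being exactly what makes the dyadic series over $K$ converge and the optimisation admissible.

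The main obstacle I anticipate is the bookkeeping: cleanly tracking the powers of $Q_*$, of $\delta_\sharp$, and of $\log Q_*$ through the stationary-phase amplitude and the two successive summation-by-parts, separately treating the high-frequency Fej\'er modes (where stationary phase gives essentially nothing, so a cruder estimate is needed), and checking that the stationary-phase symbol retains enough smoothness jointly in the $\bdj$- and $\bdm$-variables for Lemma~\ref{lem: summation by parts} to apply after the dyadic localisations.
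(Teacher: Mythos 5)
Your outline is correct and follows essentially the same route as the paper's proof: Fej\'er majorant, Poisson summation in the numerator variable, stationary phase via the Legendre-transform identities, summation-by-parts in $\bdj$ (after splitting according to which coordinate attains $|\bdj|_\infty$), dyadic pigeonholing of $\|q f_r(\bda/q)\|$ to recognise the anisotropic primal count with the fattened weight $\widetilde{w}$, and then insertion of \eqref{eq: codim R hypothesis} followed by a balancing step. Your auxiliary parameter $\delta_\sharp$, optimised at the end, plays exactly the role of the paper's final monotonicity argument (replacing $\delta_*$ by $\eta_* = \max\{\delta_{\tI},\delta_{\tII}\}$), so the two presentations are equivalent up to reorganisation.
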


\begin{lemma}[{\textit{c.f.} \cite[Proposition 3.3]{Srivastava2025}}]\label{lem: codim R self imp *} Let $\rho \geq 1$ and $w \in \cW_{\cM}(\rho)$. Suppose that for all $\delta_* \in (0, 1/4)$, the bound
\begin{equation*}
\fkN_{\cM, w}^*(Q_*,\delta_*) \le  A\big( \delta_* Q_*^{n+R} +  Q_*^{\alpha} \log^{\gamma} Q_*\big)
\end{equation*} 
holds for some $\frac{n(n + R +1)}{n+2} < \alpha \le n+R$, $\gamma \geq 0$ and $A \geq 1$. Then, for all $\bddelta \in (0, 1/4)^R$, we have
\begin{equation*}
\fkN_{\cM, w}(Q, \bddelta) \ll_{\rho, \alpha} A\big( \bddelta^{\times}Q^{n+1} +  \bddelta^{\times} (\bddelta^{\wedge})^{-R} Q^{n + 1 - \frac{nR}{2\alpha - n}} \log^{\frac{2R\gamma}{2\alpha - n}} Q\big). 
\end{equation*}
\end{lemma}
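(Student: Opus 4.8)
\textbf{Proof plan for Lemma~\ref{lem: codim R self imp *}.}

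The plan is to run the standard Fourier-analytic duality argument that converts a bound on the dual counting function $\fkN_{\cM, w}^*$ into a bound on $\fkN_{\cM, w}$, in the spirit of \cite[Proposition 3.3]{Srivastava2025} (as the citation in the statement signals). First I would fix $\bddelta \in (0,1/4)^R$ and use the Fej\'er kernel bound \eqref{eq fejer char est} to dominate the sharp indicator $\mathbbm{1}_{\{\|qf_r(\bda/q)\| \le \delta_r\}}$ by a constant times $\cF_{D_r}(qf_r(\bda/q))$ with $D_r := \lfloor 1/(2\delta_r)\rfloor$. Expanding each Fej\'er kernel into its Fourier series $\cF_{D_r}(\theta) = \sum_{|j_r| \le D_r} \tfrac{D_r - |j_r|}{D_r^2} e(j_r\theta)$ and multiplying out over $1 \le r \le R$, we obtain
\begin{equation*}
    \fkN_{\cM, w}(Q,\bddelta) \ll \sum_{q=1}^Q \sum_{\bdj \in \Z^R,\, |j_r| \le D_r} \prod_{r=1}^R \frac{D_r - |j_r|}{D_r^2}\, \sum_{\bda \in \Z^n} w(\bda/q)\, e\big(q\, f_{\bdj}(\bda/q)\big),
\end{equation*}
where $f_{\bdj}(\bdx) = \sum_r j_r f_r(\bdx)$. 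Separating the $\bdj = \bdzero$ term, which contributes the main term $\delp Q^{n+1}$ after noting $\prod_r D_r^{-1} \sim \delp$ and that $\sum_{\bda} w(\bda/q) \ll q^n$, leaves the off-diagonal $\bdj \ne \bdzero$ sum to be estimated.

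For the off-diagonal terms I would apply Poisson summation in $\bda$: writing $\sum_{\bda} w(\bda/q) e(q f_{\bdj}(\bda/q)) = q^n \sum_{\bdk \in \Z^n} \widehat{(w\, e(q f_{\bdj}))}(q\bdk)$ after rescaling, the resulting oscillatory integrals have phase $q(f_{\bdj}(\bdx) - \bdk \cdot \bdx)$; when $\bdj \ne \bdzero$ and $|\bdj|_\infty$ is comparable to a dyadic parameter, normalising $\bdtheta = \bdj/|\bdj|_\infty$ puts us in the regime where $\nabla_\bdx f_\bdtheta$ is a diffeomorphism onto its image, (CC) guarantees the nonvanishing Hessian, and Lemma~\ref{lem: stationary phase} yields an asymptotic with leading term of size $|\bdj|_\infty^{-n/2}$ and phase $|\bdj|_\infty f^*_{\bdtheta}(\bda/|\bdj|_\infty)$ at the critical point $\bda$ determined by $\nabla_\bdx f_\bdtheta(\bda/|\bdj|_\infty) = \bdk$. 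After a dyadic decomposition in $|\bdj|_\infty \sim N$ and in $q \sim P$, summing the leading term over the lattice and controlling the Fej\'er weights $\tfrac{D_r - |j_r|}{D_r^2}$ (which give a factor $\prod_r \delta_r \cdot$ (number of allowed $j_r$)$= \delp (\bddelta^\wedge)^{-R}$-type savings after the $\ell^\infty$ normalisation), the sum over critical points $\bda$ is exactly (a weighted version of) $\fkN^*_{\cM,w}(N,\delta_*)$ with $\delta_* \sim $ a power of $P/N$ dictated by the original constraint. Feeding in the hypothesised bound $\fkN^*_{\cM,w}(Q_*,\delta_*) \le A(\delta_* Q_*^{n+R} + Q_*^\alpha \log^\gamma Q_*)$ and optimising the split between the two dyadic ranges produces the claimed exponent $n+1 - \tfrac{nR}{2\alpha - n}$ with the logarithmic power $\tfrac{2R\gamma}{2\alpha-n}$; the hypothesis $\alpha > \tfrac{n(n+R+1)}{n+2}$ is precisely what ensures the resulting exponent stays below $n+1$ and that the dyadic geometric series converge on the correct side.

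The main obstacle is the bookkeeping in the double dyadic decomposition and, in particular, controlling the error terms from stationary phase and the tail contributions where $|\bdk|$ is large (so that there is no critical point and one integrates by parts to gain arbitrary decay) — these must be shown to be dominated by the two displayed terms uniformly in $\bddelta$, which is exactly the kind of careful error-term analysis that \cite{Srivastava2025} carries out and that we are refining. A secondary technical point is keeping track of the non-isotropic weights $\tfrac{D_r - |j_r|}{D_r^2}$ after the $\ell^\infty$-normalisation $\bdj \mapsto \bdj/|\bdj|_\infty$ enters the definition \eqref{eq: dual count} of $\fkN^*_{\cM,w}$, so that the product structure correctly reproduces the factors $\delp$ and $(\bddelta^\wedge)^{-R}$ in the conclusion rather than something weaker. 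I would handle this by first reducing, via a dyadic pigeonholing in each $\delta_r$, to the case where all $\delta_r$ are comparable up to the index realising $\bddelta^\wedge$, which simplifies the normalisation at the cost of harmless logarithmic factors absorbed into $\log^{\gamma}$.
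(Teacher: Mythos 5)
Your skeleton — Fej\'er kernels in each of the $R$ coordinates, isolating the $\bdj=\bdzero$ term to get $\bddelta^{\times}Q^{n+1}$, Poisson summation in $\bda$, stationary phase via (CC) and the Legendre transform, and then recognising the dual count $\fkN^{*}_{\cM,w}$ — is indeed the intended route: the paper does not write this proof out, but states that the argument of \cite[Proposition 3.3]{Srivastava2025} adapts, i.e.\ one mirrors the proof of Lemma~\ref{lem: codim R self imp} with the two sides exchanged. Two mechanical points in your plan are off, though fixable: the single-variable summation-by-parts is over $q$, and the dual parameter arises as $\delta_{*}=2^{i}Q^{-1}$ from splitting the size of $\||\bdj|_{\infty}f^{*}_{\bdj/|\bdj|_{\infty}}(\bdk/|\bdj|_{\infty})\|$ dyadically against $Q$ — it is not ``a power of $P/N$''; moreover one simply relaxes the box $\{|j_r|\le D_r\}$ to the cube $\{|\bdj|_{\infty}\le(\bddelta^{\wedge})^{-1}\}$ to invoke $\fkN^{*}$, and no pigeonholing to comparable $\delta_r$ is available (extra logarithms cannot be ``absorbed into $\log^{\gamma}$'', since the conclusion asserts the specific exponent $\tfrac{2R\gamma}{2\alpha-n}$).

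The genuine gap is the final step. Carrying out the bookkeeping you describe and feeding in the hypothesis yields, up to constants, three terms of the shape $\bddelta^{\times}Q^{n+1}$, $\bddelta^{\times}(\bddelta^{\wedge})^{-(n/2+R)}Q^{n/2}\log Q$ and $\bddelta^{\times}(\bddelta^{\wedge})^{-(\alpha-n/2)}Q^{n/2+1}|\log\bddelta^{\wedge}|^{\gamma}$; for small $\bddelta^{\wedge}$ the last two are \emph{not} dominated by the claimed right-hand side, so no amount of ``optimising the split between the two dyadic ranges'' can conclude from there, and the factor $\bddelta^{\times}(\bddelta^{\wedge})^{-R}$ does not come from the Fej\'er weights as you suggest. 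What is missing is the comparing-terms/monotonicity mechanism of Step 5 in the proof of Lemma~\ref{lem: codim R self imp}: one identifies the threshold $\eta\sim Q^{-\frac{n}{2\alpha-n}}\log^{\frac{2\gamma}{2\alpha-n}}Q$ at which the term $\bddelta^{\times}Q^{n+1}$ begins to dominate, and when $\bddelta^{\wedge}\le\eta$ one uses monotonicity of $\fkN_{\cM,w}(Q,\cdot)$ in \emph{each} of the $R$ components to replace every $\delta_r$ by $\max\{\delta_r,\eta\}$, at the cost of a factor $(\eta/\bddelta^{\wedge})^{R}\le(\bddelta^{\wedge})^{-R}\eta^{R}$. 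It is precisely this $R$-th power of the threshold that produces $Q^{n+1-\frac{nR}{2\alpha-n}}$ and the log power $\frac{2R\gamma}{2\alpha-n}$ (the factor $R$ multiplying $\gamma$ cannot arise from weight accounting), and the hypothesis $\alpha>\frac{n(n+R+1)}{n+2}$ is used exactly here, to ensure the threshold coming from the stationary-phase term dominates the one from the other error term (cf.\ Remark~\ref{rmk: alpha beta}), not merely to make geometric series converge.
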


\begin{remark}\label{rmk: alpha beta} We observe the following relationship between the conditions on the exponents $\beta$ and $\alpha$ in Lemma~\ref{lem: codim R self imp} and Lemma~\ref{lem: codim R self imp *}.
\begin{itemize}
    \item If $\beta > \frac{n(n+R+1)}{n+2R}$, then $\frac{n}{2\beta - n} < \frac{n+2R}{n+2}$. Defining $\alpha := n + R - \frac{n}{2\beta - n}$, this in turn implies that $\alpha > \frac{n(n+R+1)}{n+2}$. 
    \item If $\alpha > \frac{n(n+R+1)}{n+2}$, then $\frac{n}{2\alpha - n} < \frac{n+2}{n+2R}$. Defining $\beta := n + 1 - \frac{nR}{2\alpha - n}$, this in turn implies that $\beta > \frac{n(n+R+1)}{n+2R}$. 
\end{itemize}
\end{remark}

Lemma~\ref{lem: codim R self imp} and Lemma~\ref{lem: codim R self imp *} are counterparts of \cite[Proposition 3.2]{Srivastava2025} and \cite[Proposition 3.3]{Srivastava2025}, respectively. More precisely, Lemma~\ref{lem: codim R self imp} refines \cite[Proposition 3.2]{Srivastava2025} by extending the range of the exponent $\beta$, and is the source of the improvement in Theorem~\ref{thm: refined count} over the results in \cite{Srivastava2025}. On the other hand, Lemma~\ref{lem: codim R self imp *} is a minor variant of \cite[Proposition 3.3]{Srivastava2025} and both Lemma~\ref{lem: codim R self imp *} and \cite[Proposition 3.3]{Srivastava2025} can be thought of as the same strength. 

\begin{remark} In both Lemma~\ref{lem: codim R self imp} and Lemma~\ref{lem: codim R self imp *}, the shape of the estimates is slightly different compared to those appearing in \cite{Srivastava2025}, with only two terms (rather than three) appearing on the right-hand sides of the inequalities. The modified form of the estimates helps to streamline the argument. Indeed, the middle term on the right-hand sides in \cite{Srivastava2025} can be dominated by the geometric mean of the terms appearing in the inequalities above.
\end{remark}

We shall only present the proof of Lemma~\ref{lem: codim R self imp}. The argument used to prove \cite[Proposition 3.3]{Srivastava2025} can easily be adapted to prove Lemma~\ref{lem: codim R self imp *}. In what follows, it is important to note that, in order to establish \eqref{eq: codim R improv}, we need the bound \eqref{eq: codim R hypothesis} for all $\bddelta = (\delta_1, \dots, \delta_R) \in (0,1/4)^R$ and not just the case $\delta_1= \cdots =  \delta_R$.

\begin{proof}[Proof (of Lemma~\ref{lem: codim R self imp})] The argument is rather lengthy, so we break it into steps. \medskip

\noindent \textit{Step 0: Dyadic decomposition}. Given $Q_*\in \N_0$ and $\delta_* \in (0,1/4)$, recalling the definition \eqref{eq: dual count}, we may bound 
\begin{equation*}
    \fkN_{\cM, w}^*(Q_*,\delta_*)  \leq \sum_{s=1}^R \sum_{\substack{\bdj \in \N_0^R \\
0 < |\bdj|_{\infty} = j_s \leq Q_*}} \;\; \sum_{\substack{\bda \in \Z^n \\ \|j_s f_{\bdj/j_s}^{*}(\bda/j_s)\| \le \delta_{*}}} w_{\bdj/j_s}^{*}(\bda/j_s).
\end{equation*}
In view of what follows, it is convenient to relax the summation in $\bdj$. In particular,
\begin{equation*}
\fkN_{\cM, w}^*(Q_*,\delta_*)  \leq \sum_{s=1}^R \fkN_{\cM, w}^{*,s}(Q_*,\delta_*) 
\end{equation*}
where
\begin{equation}\label{eq: relax}
  \fkN_{\cM, w}^{*,s}(Q_*,\delta_*) :=  \sum_{\substack{\bdj \in \N_0^R \\ 0 < |\bdj|_{\infty} \leq 2j_s \leq 2Q_*}} \;\;\sum_{\substack{\bda \in \Z^n \\ \|j_s f_{\bdj/j_s}^{*}(\bda/j_s)\| \le \delta_{*}}} w_{\bdj/j_s}^{*}(\bda/j_s).
\end{equation} 

We further decompose the $\fkN^{*, s}_{\cM, w}(Q_*, \delta_*)$ dyadically. For $\ell \in \N$, let
\begin{equation*}
\cJ_{\ell}^s := \Big\{\bdj\in \N_0^R : 2^{\ell-1} < j_s \leq 2^{\ell} \textrm{ and } |\bdj|_{\infty} \leq 2^{\ell+1}\Big\}.    
\end{equation*}
With this definition, the range of $\bdj$ summation in \eqref{eq: relax} satisfies
\begin{equation*}
   \big\{\bdj \in \N_0^R : 0 < |\bdj|_{\infty} \leq 2j_s \leq 2Q_*\big\} \subseteq \bigcup_{\ell = 1}^{\ceil{\log_2Q_*}} \cJ_{\ell}^s
\end{equation*}
and so we may bound 
\begin{equation*}
    \fkN^{*, s}_{\cM, w}(Q_*, \delta_*) \leq \sum_{\ell=1}^{\ceil{\log_2 Q_*}} \fkN^{*, s, \ell}_{\cM, w} (Q_*, \delta_*)
\end{equation*}
where
\begin{equation*}
    \fkN^{*, s, \ell}_{\cM, w} (Q_*, \delta_*) := \sum_{\bdj\in \cJ_{\ell}^s}\sum_{\substack{\bda \in \Z^n \\ \|j_s f^*_{\bdj/j_s}(\ba/j_s)\| \leq \delta_*}}\, w_{\bdj/j_s}^*(\bda/j_s).
\end{equation*}
Note that for $\bdj \in \cJ_{\ell}^s$ we have $1 \leq |\bdj/j_s|_{\infty} \leq 2$ and so $\nabla_{\bdx} f_{\bdj/j_s} \colon \cD \to \cD_{\bdj/j_s}^*$ is a diffeomorphism. Thus, each dyadic counting function $\fkN^{*, s, \ell}_{\cM, w} (Q_*, \delta_*)$ is well defined.\medskip

\noindent \textit{Step 1: Fourier reformulation}. Let $D := \floor{\frac{1}{2\delta_*}}$, noting $D \geq \frac{1}{2\delta_*} - 1 \geq \frac{1}{4\delta_*}$. For each $1 \leq \ell \leq \ceil{\log_2 Q_*}$, we introduce the degree $D$ Fej\'er kernel as in \eqref{eq fejer char est} to bound
\begin{equation*}
\fkN^{*, s, \ell}_{\cM, w} (Q_*, \delta_*) \leq \delta_* \sum_{d=-D}^D a_{d,\delta_*} \sum_{\bdj \in \cJ_{\ell}^s}\sum_{\substack{\bda \in \Z^n}}w_{\bdj/j_s}^*(\bda/j_s) e\big(dj_s f^*_{\bdj/j_s}(\ba/j_s)\big),
\end{equation*}
where the real coefficients $a_{d, \delta_*}$ satisfy $a_{-d, \delta_*} = a_{d, \delta_*}$ and $0 \leq a_{d, \delta_*} \leq \pi^2$ for all $d \in \Z$. The right-hand side of the above inequality can be split into two terms
\begin{equation*}
    \pi^2\delta_* \sum_{\bdj \in \cJ_{\ell}^s}\sum_{\substack{\bda \in \Z^n}}w_{\bdj/j_s}^*(\bda/j_s) + \delta_* 2\fkR \sum_{d=1}^D a_{d,\delta_*} \sum_{\bdj \in \cJ_{\ell}^s}\sum_{\substack{\bda \in \Z^n}}w_{\bdj/j_s}^*(\bda/j_s) e\big(dj_s f^*_{\bdj/j_s}(\ba/j_s)\big).
\end{equation*}
Summing over $1 \leq \ell \leq \ceil{\log_2 Q_*}$, it then follows that
\begin{equation*}\label{eq: self impr 1 3}
    \fkN^{*, s}_{\cM, w} (Q_*, \delta_*) \ll \delta_* Q_*^{n+R} + \Sigma^{*,s}(Q_*, \delta_*)
\end{equation*}
where
\begin{equation*}
    \Sigma^{*, s}(Q_*, \delta_*) := \delta_*  \sum_{\ell = 1}^{\ceil{\log_2 Q_*}} \sum_{d=1}^D \Big|\sum_{\bdj \in \cJ_{\ell}^s}\sum_{\substack{\bda \in \Z^n}}w_{\bdj/j_s}^*(\bda/j_s) e\big(dj_s f^*_{\bdj/j_s}(\ba/j_s)\big)\Big|.
\end{equation*}

By the Poisson summation formula and a change of variables,
\begin{equation*}
  \Sigma^{*, s}(Q_*, \delta_*) = \delta_* \sum_{\ell = 1}^{\ceil{\log_2 Q_*}}  \sum_{d=1}^D \Big|\sum_{\bdj \in \cJ_{\ell}^s} j_s^n \sum_{\substack{\bdk \in \Z^n}}\cI^{*,s}(d, \bdj, \bdk) \Big|  
\end{equation*}
where 
\begin{equation*}
  \cI^{*,s}(d, \bdj, \bdk) := \int_{\R^n}  w_{\bdj/j_s}^*(\bdy) e\big(-d j_s \phi_{\bdj/j_s}^*(\bdy; \bdk/d))\big)\,\ud \bdy \quad \textrm{for} \quad \phi_{\bdtheta}^*(\bdy; \bdz) := \bdy\cdot\bdz - f^*_{\bdtheta}(\bdy).
\end{equation*}

\noindent \textit{Step 2: Stationary phase}. We next evaluate $\cI^{*,s}(d, \bdj, \bdk)$ using the method of stationary phase. Recalling Definition~\ref{dfn: weight ext}, since $\widetilde{w} \in \cW_{\cM}(w; h)$, there exist $0 < \varepsilon_1 < \varepsilon_2 \leq 2\varepsilon$ with $\varepsilon_2 - \varepsilon_1 \geq h > 0$ such that $\supp w \subseteq B(\bdx_0, \varepsilon_1)$ and $\widetilde{w}(\bdy) = 1$ for all $\bdy \in B(\bdx_0, \varepsilon_2)$. We consider two cases, defined in terms of the ball $B(\bdx_0, \varepsilon_2)$.\medskip

\noindent \underline{Case 1.} Non-stationary: $\bdk/d \notin B(\bdx_0, \varepsilon_2)$.\medskip

\noindent If $\bdy \in \supp w_{\bdtheta}^*$ for $\bdtheta \in \R^R$ with $1 \leq |\theta|_{\infty} \leq 2$, then
\begin{equation*}
  (\nabla_{\bdy} f_{\bdtheta}^*)(\bdy) = (\nabla_{\bdx}f_{\bdtheta})^{-1}(\bdy) \in \supp w \subseteq B(\bdx_0, \varepsilon_1).   
\end{equation*}
Thus, 
\begin{equation*}
    |\nabla_{\bdy} \phi_{\bdtheta}^*(\bdy; \bdk/d)| \geq |\bdk/d -\bdx_0| - |(\nabla_{\bdy} f_{\bdtheta}^*)(\bdy) - \bdx_0| \geq \varepsilon_2 - \varepsilon_1 \geq h > 0.
\end{equation*}
Hence,
\begin{equation*}
    |\nabla_{\bdy} d j_s \phi_{\bdj/j_s}^*(\bdy; \bdk/d)| \gg_h d|j_s| \qquad \textrm{for all $\bdy \in \supp w_{\bdj/j_s}^*$.}
\end{equation*}
On the other hand, if $|\bdk| \geq Cd$ for a suitably large constant $C\geq 1$, then
\begin{equation*}
    |\nabla_{\bdy} d j_s\phi_{\bdj/j_s}^*(\bdy; \bdk/d)| \gg |\bdk|. 
\end{equation*}
Thus, by repeated integration-by-parts,
\begin{equation*}
    |\cI^{*, s}(d, \bdj, \bdk)| \ll_{h, \rho, M, N} |j_s|^{-N}|d|^{-N}|\bdk/d|^{-M} \qquad \textrm{for all $M$, $N \in \N_0$}
\end{equation*}
and the contribution in this case is essentially negligible. \medskip 

\noindent \underline{Case 2.} Stationary: $\bdk/d \in B(\bdx_0, \varepsilon_2)$.\medskip

\noindent Let $\bdtheta \in \R^R$ satisfy $1 \leq |\bdtheta|_{\infty} \leq 2$ and, recalling \eqref{eq: L trans 1}, observe that
\begin{equation*}
    \nabla_{\bdy}\phi_{\bdtheta}^*(\bdy; \bdz) = \bdz - (\nabla_{\bdy} f_{\bdtheta}^*)(\bdy) = \bdz - (\nabla_{\bdx} f_{\bdtheta})^{-1}(\bdy).
\end{equation*}
Further recall that $\nabla_{\bdx} f_{\bdtheta} \colon \cD \to \cD_{\bdtheta}^*$ is a diffeomorphism. It follows that for fixed $\bdz \in \cD$, the phase $\bdy \mapsto \phi_{\bdtheta}^*(\bdy; \bdz)$ has a unique critical point in $\cD_{\bdtheta}^*$ at the point $\bdy(\bdz) := (\nabla_{\bdx} f_{\bdtheta})(\bdz)$. Moreover, by \eqref{eq: L trans 2}, the Hessian matrix with respect to $\bdy$ satisfies
\begin{equation*}
    (\partial_{\bdy \bdy}^2 \phi_{\bdtheta}^*)(\bdy(\bdz), \bdz) = - (f_{\bdtheta}'')^{-1}(\bdz),
\end{equation*}
where the inverse is well-defined by our hypothesis \eqref{eq: CC}. In particular, the critical point is non-degenerate. Finally, evaluating the phase at the critical point yields
\begin{equation*}
    \phi_{\bdtheta}^*(\bdy(\bdz); \bdz) = (\nabla_{\bdx} f_{\bdtheta}^*)^{-1}(\bdz) \cdot \bdz - f_{\bdtheta}^*\circ (\nabla_{\bdx} f_{\bdtheta}^*)^{-1}(\bdz) = f_{\bdtheta}^{**}(\bdz) = f_{\bdtheta}(\bdz),
\end{equation*}
where we have used the definition of the Legendre transform and the remaining identity in \eqref{eq: L trans 1}. 

Fix $1 \leq d \leq D$, $\bdj \in \cJ_{\ell}^s$ and $\bdk \in \Z^n$ satisfying $\bdk/d \in B(\bdx_0, \varepsilon_2)$. Since, by hypothesis $\varepsilon_2 \leq 2\varepsilon_0 = \tau^2$, it follows from \eqref{eq: codim R ift 1} that 
\begin{equation*}
   B(\bdx_0, \varepsilon_2) \subseteq B(\bdx_0, \tau^2) \subseteq \cD. 
\end{equation*}
 Using the observations of the previous paragraph, we may apply the method of stationary phase in the form of Lemma~\ref{lem: stationary phase} to evaluate $\cI^{*,s}(d, \bdj, \bdk)$. In particular,
\begin{equation}
    \label{eq: self impr 2 1}
    \cI^{*,s}(d, \bdj, \bdk) = (d j_s)^{-n/2}e\big(- d j_s f_{\bdj/j_s}(\bdk/d)\big) a_{d, \bdk}^s(\bdj)
\end{equation}
for 
\begin{equation*}
a_{d, \bdk}^s(\bdu) :=  a(d u_s; \bdu/u_s, \bdk/d) \qquad \textrm{for $\bdu = (u_1, \dots, u_R) \in \R^R$, $u_s \neq 0$,}
\end{equation*}
where $a \in C^{\infty}\left([1,\infty) \times [0, 2]^R\times \cD\right)$ satisfies 
  \begin{equation}\label{eq: self impr 2 2}
     \sup_{\mathbf{t} \in [0, 2]^R}  \sup_{\bx \in \cD} |\partial_{\lambda}^{\iota} \partial^{\bdalpha}_{\mathbf{t}} a(\lambda;\mathbf{t}; \bdx)| \ll_{\rho} \lambda^{-\iota} \qquad \textrm{for all $\bdalpha \in \{0,1\}^R$, $\iota \in \{0,1\}$. } 
  \end{equation}
The expression for the phase in \eqref{eq: self impr 2 1} may be simplified by noting
\begin{equation*}
    j_s f_{\bdj/j_s}(\bdx) = j_s f(\bdx) \cdot \bdj/j_s = f(\bdx) \cdot \bdj.
\end{equation*}

In light of the above, we may write
\begin{multline*}
    \Sigma^{*, s}(Q_*, \delta_*) =  \\ \delta_* \sum_{\ell = 1}^{\ceil{\log_2 Q_*}} \sum_{d=1}^D \Big|\sum_{\bdj \in \cJ_{\ell}^s} j_s^n \sum_{\substack{\bdk \in \Z^n \\ \bdk/d \in B(\bdx_0, \varepsilon_2)}}  (d j_s)^{-n/2}e(- d f(\bdk/d) \cdot \bdj) a_{d, \bdk}^s(\bdj)\Big|
    + O_{h, \rho}(\delta_*),
\end{multline*}
where the $O_{h, \rho}(\delta_*)$ error term arises from the non-stationary terms from Case 1. Recall that $\widetilde{w}$ satisfies $\widetilde{w}(\bdx) = 1\geq w(\bx)$ for all $\bdx \in B(\bdx_0, \varepsilon_2)$. Thus, after rearranging the order of summation, we may dominate
\begin{equation}\label{eq: self impr 2 3}
    \Sigma^{*, s}(Q_*, \delta_*) \leq \Sigma_0^{*, s}(Q_*, \delta_*) + O_{h, \rho}(\delta_*)
\end{equation}
where
\begin{equation}\label{eq: self impr 2 4}
    \Sigma_0^{*, s}(Q_*, \delta_*) := \delta_*\sum_{d=1}^D d^{-n/2} \sum_{\bdk \in \Z^n} \widetilde{w}(\bdk/d) \sum_{\ell = 1}^{\ceil{\log_2 Q_*}} \Big|\sum_{\bdj \in \cJ_{\ell}^s} j_s^{n/2}e(- d f(\bdk/d) \cdot \bdj) a_{d, \bdk}^s(\bdj)\Big|.
\end{equation}
We shall relate the exponential sum appearing in \eqref{eq: self impr 2 4} to the counting function appearing in our hypothesis \eqref{eq: codim R hypothesis}.\medskip

\noindent \textit{Step 3: Passing to the original counting problem}. We estimate the sum in $\bdj$ using summation-by-parts as in Lemma~\ref{lem: summation by parts}. To this end, define
\begin{equation*}
    \fka_{d, \bdk}^s(\bdu) := u_s^{-n/2} a_{d,\bdk}^s(\bdu). 
\end{equation*}
For $1 \leq \ell \leq \ceil{\log_2 Q_*}$, recalling the definitions from \S\ref{subsec: sum by parts}, the set
\begin{equation*}
   \bdJ_{\ell}^s := \{(u_1, \dots, u_R ) \in [0, 2^{\ell+1}]^R \textrm{ with } 2^{\ell-1} \leq u_s \leq 2^{\ell} \big\} 
\end{equation*}
satisfies $\bdJ_{\ell}^s \in \fkJ^R(2^{\ell + 1})$ and $\cJ_{\ell}^s \subseteq \bdJ_{\ell}^s$. Moreover, it follows from \eqref{eq: self impr 2 2} that
\begin{align*}
 &\sum_{\bdalpha \in \{0, 1\}^R} 2^{(\ell + 1) |\bdalpha|}\sup_{\bdu \in \bdJ_{\ell}^s} \big|\partial_{\bdu}^{\bdalpha} \fka_{d, \bdk}^s(\bdu)\big| \ll_{\rho} 2^{\ell n/2}, 
\end{align*}
yielding an estimate of the form \eqref{eq sbp wt cond}.
Thus, we have $\fka_{d, \bdk}^s \in \fkA(2^{\ell+1}; \bdJ_{\ell}^s; C_{\theta}2^{\ell n/2})$ for some $C_{\theta}>1$. Using Lemma \ref{lem: summation by parts}, we thereby conclude that 
\begin{equation*}
    \Big|\sum_{\bdj\in \cJ_{\ell}^s}j_s^{n/2}e\big(-d f(\bk/d)\cdot \bdj \big)a_{d, \bdk}^s(\bdj)\Big| \ll_{\rho} 2^{\ell n/2} \prod_{r = 1}^R \min\big\{2^\ell, \|df_r\left(\bk/d\right)\|^{-1}\big\}.
\end{equation*} 
Summing over $1 \leq \ell \leq \ceil{\log_2 Q_*}$ and applying the resulting estimate to \eqref{eq: self impr 2 4} yields
\begin{equation}\label{eq: self impr 3 1}
 \Sigma_0^{*, s}(Q_*, \delta_*) \ll_{\rho} \delta_* Q_*^{n/2}\sum_{d=1}^D d^{-n/2}\sum_{\bdk\in \Z^n}{\widetilde{w}(\bdk/d)}\prod_{r=1}^R\min\big\{Q_*, \|df_r(\bk/d)\|^{-1}\big\}.
\end{equation}
We split the sum in \eqref{eq: self impr 3 1} according to the relative sizes of $\|df_r(\bk/d)\|$ and $Q_*$  for $1 \leq r \leq R$, leading to the bound 
\begin{equation}
\label{eq: self impr 3 2}
\Sigma_0^{*, s}(Q_*, \delta_*) \ll_{\rho} \delta_* Q_*^{n/2 + R}\sum_{\substack{\bdi = (i_1, \dots, i_r) \in \N_0^R\\1 \leq 2^{i_r} \leq Q_* \\ 1 \leq r \leq R}}2^{-\sum_{r=1}^Ri_r} \Big[\sum_{d=1}^D d^{-n/2} \sum_{\substack{\bk\in \mathbb{Z}^n\\ \|df_r(\bk/d)\|\leq 2^{i_r}Q_*^{-1} \\1\leq r\leq R}} \widetilde{w}(\bdk/d)\Big].
\end{equation}
Applying a dyadic decomposition and the definition \eqref{eq: codim R smooth count} of the counting function, the bracketed expression on the right-hand side of \eqref{eq: self impr 3 2} may be bounded by 
\begin{equation}\label{eq: self impr 3 3}
    \sigma_{\bdi}(Q_*, \delta_*) := \sum_{\substack{1 \leq \lambda \leq D \\ \mathrm{dyadic}}} \lambda^{-n/2} \fkN_{\cM, \widetilde{w}}(\lambda, \bddelta_{\bdi})
\end{equation}
where $\bddelta_{\bdi} = (2^{i_1}Q_*^{-1}, \dots, 2^{i_R} Q_*^{-1})$ for $\bdi = (i_1, \dots, i_R) \in \N_0^R$ satisfying $1 \leq 2^{i_r} \leq Q_*$ for $1 \leq r \leq R$.\medskip

\noindent \textit{Step 4: Applying the hypothesised bounds}.  Observe that
\begin{equation*}
    \bddelta_{\bdi}^{\times} = 2^{\sum_{r=1}^R i_r} Q_*^{-R} \qquad \textrm{and} \qquad \bddelta_{\bdi}^{\times}(\bddelta_{\bdi}^{\wedge})^{-R} = 2^{\sum_{r=1}^R i_r - R\min\{i_1, \dots, i_R\}}.
\end{equation*}
We now apply the hypothesised bound \eqref{eq: codim R hypothesis} to estimate each term in \eqref{eq: self impr 3 3}, giving
\begin{equation*}
 \fkN_{\cM, \widetilde{w}}(\lambda, \bddelta_{\bdi}) \leq   A\big( 2^{\sum_{r = 1}^R i_r} Q_*^{-R} \lambda^{n + 1}  + 2^{\sum_{r=1}^R i_r - R\min\{i_1, \dots, i_R\}} \lambda^{\beta} \log^{\gamma} \lambda\big).
\end{equation*}
Since $\beta > n/2$, multiplying through by $\lambda^{-n/2}$, summing over $1 \leq \lambda \leq D$ dyadic and recalling that $D \sim \delta_*^{-1}$ yields
\begin{equation*}
    \sigma_{\bdi}(Q_*, \delta_*) \ll A\big(2^{\sum_{r = 1}^R i_r} \delta_*^{-n/2 - 1} Q_*^{-R} + 2^{\sum_{r=1}^R i_r - R\min\{i_1, \dots, i_R\}} \delta_*^{-\beta + n/2}|\log\delta_*|^{\gamma}\big).
\end{equation*}
The above inequality can be applied to bound \eqref{eq: self impr 3 2}, giving
\begin{align}
\label{eq: self impr 4 1}
\Sigma_0^{*, s}(Q_*, \delta_*) &\ll_{\rho} A\big(\delta_*^{-n/2}  Q_*^{n/2} \log^R Q_* + \delta_*^{-\beta+ n/2 + 1} |\log\delta_*|^{\gamma} Q_*^{n/2 + R} \log^{R-1} Q_* \big).
\end{align}
Note that the $O(\delta_*)$ error in \eqref{eq: self impr 2 3} is bounded by the right-hand side of \eqref{eq: self impr 4 1}. Thus, combining \eqref{eq: self impr 4 1} with \eqref{eq: self impr 1 3} and \eqref{eq: self impr 2 3}, we obtain
\begin{align*}
&\fkN^{*, s}_{\cM, w}  (Q_*, \delta_*) \ll_{\rho, h} \\
&A \Big(\,\,\underbrace{\delta_* Q_*^{n+R}}_{\tO \, :=} + \underbrace{\delta_*^{-n/2}  Q_*^{n/2} \log^R Q_*}_{\tI \,:= } + \underbrace{\delta_*^{-\beta+ n/2 + 1}|\log\delta_*|^{\gamma} Q_*^{n/2 + R} \log^{R-1} Q_*}_{\tII \,:= } \,\,\Big),
\end{align*}
It remains to compare the terms appearing in the above bound in order to arrive at our final estimate.\medskip

\noindent \textit{Step 5: Comparing terms}. We make the following observations:
\begin{equation*}
\tO \leq \tI \quad \Longleftrightarrow \quad \delta_{*} \leq \delta_{\tI}  := Q_*^{- \frac{n+2R}{n+2}} \, \log^{\frac{2R}{n+2}} Q_*,    
\end{equation*}
and
\begin{equation*}
  \tO \leq \tII \quad \Longleftrightarrow \quad \delta_{*} \leq \delta_{\tII} \quad \textrm{where}\quad
 \delta_{\tII}^{\beta - \frac{n}{2}}|\log\delta_{\tII}|^{-\gamma} = Q_*^{-n/2} \log^{R-1} Q_*. 
\end{equation*}
The latter condition implies that $|\log \delta_{\tII} | \sim \log Q_*$. Hence 
\begin{equation}\label{eq: self impr 5 1}
    \delta_{\tII} \sim Q_*^{-\frac{n}{2\beta - n}} \log^{\frac{2(R-1 + \gamma)}{2\beta - n}} Q_*. 
\end{equation}
Furthermore, the hypothesis $\beta > 
\frac{n(n+R+1)}{n+2R}$ ensures that $\delta_{\tI} \ll_{\beta} \delta_{\tII}$  (see Remark~\ref{rmk: alpha beta}).

If $\delta_* \geq \max\{\delta_{\tI}, \delta_{\tII}\}$, then the above observations imply $\tO \geq \max \{\tI, \tII\}$ and so
\begin{equation*}
   \fkN_{\cM, w}^{*, s}(Q_*,\delta_*)  \ll_{\rho, h} A\delta_* Q_*^{n+R},
\end{equation*}
which is a favourable estimate. On the other hand, if $ \delta_* \leq \max\{\delta_{\tI}, \delta_{\tII}\}$, then we can apply the monotonicity of the counting function to deduce that 
\begin{equation*}
 \fkN_{\cM, w}^{*, s}(Q_*,\delta_*) \leq \fkN_{\cM, w}^{*, s}(Q_*, \eta_*) \quad \textrm{where} \quad \eta_* := \max\{\delta_{\tI}, \delta_{\tII}\}\,.
\end{equation*}
Since $\eta_* = \max\{\delta_{\tI}, \delta_{\tII}\}$, the term $\tO$ dominates both the terms $\tI$ and $\tII$ when evaluated at $\eta_*$. 
% estimate \eqref{eq: cplx self imp 9} from the previous case applies to $\fkN_{\cM, w}^{*. s}(Q_*,\eta_*)$. Furthermore, the above observations tell us that $\eta_* \ll_{\beta} \delta_{\tII}$ and 
Therefore, in light of \eqref{eq: self impr 5 1}, we deduce that
\begin{equation*}
 \fkN_{\cM, w}^{*, s}(Q_*,\delta_*)  \ll_{\rho, h} A \eta_* Q_*^{n+R}  \ll_{\beta} A Q_*^{n + R-n/(2\beta - n)} \log^{\frac{2(R-1 + \gamma)}{2\beta - n}} Q_*.
\end{equation*}
Thus, in all cases we obtain a desirable bound. 
\end{proof}

%%%%%%%%%%%%%%%%%%%%%%%%%%%%%%%%%%%%%%%%%%%%%%%%%%%%%%%%%%%%%%%%%%%%%%%%%%%%%%%%%%%%%%%%%%%%%%%%

%  Iteration scheme

%%%%%%%%%%%%%%%%%%%%%%%%%%%%%%%%%%%%%%%%%%%%%%%%%%%%%%%%%%%%%%%%%%%%%%%%%%%%%%%%%%%%%%%%%%%%%%%%

\subsection{Iteration scheme} Here we show that repeated application of the self-improving mechanism from Lemma~\ref{lem: codim R self imp} and Lemma~\ref{lem: codim R self imp *} leads to Theorem~\ref{thm: aniso refined}.

\begin{proof}[Proof (of Theorem~\ref{thm: aniso refined})] The proof proceeds in the same way as the proof of the $R\geq 3$ case of \cite[Theorem 1.9]{Srivastava2025}.

Fix $w \in \cW_{\cM}$ satisfying $\supp w \subseteq B(\bdx_0, \varepsilon)$ and $w(\bdy) = 1$ for $\bdy \in B(\bdx_0, \varepsilon/2)$, where $\varepsilon := \tau^2/2$ as above. Let $\nu > 0$ be given. As discussed in \S\ref{subsec: prelim}, it then suffices to show
\begin{equation}\label{eq: codim R pf 1}
    \fkN_{\cM, w}(Q, \bddelta) \ll_{\nu}  \bddelta^{\times}Q^{n+1} + \bddelta^{\times} (\bddelta^{\wedge})^{-R} Q^{n + 1 - \frac{(n+2)R}{n+2R} + \nu}.
\end{equation}

 The strategy is to successively apply Lemma~\ref{lem: codim R self imp} and Lemma~\ref{lem: codim R self imp *} to obtain bounds for our counting function. Unfortunately, there is a minor technicality regarding the choice of weights. To address this, let $N = N(\nu) \in \N$ satisfy
\begin{equation}
    \label{eq: codim R pf 2}
    \frac{\nu}{4}\leq \left(\frac{4}{5}\right)^{N(\nu)}\, R<  \frac{\nu}{2}
\end{equation}
and fix $w_0 := w$ and $\varepsilon_0 := \varepsilon$. Let $\omega \colon \R^n\to [0,1]$ be a smooth function satisfying
\begin{equation*}
    \supp \omega \subseteq B(\bzero, 1) \qquad \textrm{and} \qquad \int_{\R^n} \omega=1
\end{equation*}
For $k\in \N_0$, define and
\begin{equation*}
 \omega_k(\bx):=\varepsilon_0^{-n}2^{kn}\omega(\varepsilon_0^{-1}2^k\bx) \quad \textrm{and} \quad   \varepsilon_k:={\varepsilon_0}\sum_{j=0}^k 2^{-j} \in [\varepsilon_0, 2\varepsilon_0).
\end{equation*}
For $1\leq k\leq N$, we define a sequence of smooth weights $w_k \colon \R^n \to [0, 1]$ by
\begin{equation*}
    w_k(\bx) := \mathbbm{1}_{B(\bx_0,\, \varepsilon_k+2^{-k-2}\varepsilon_0)}* \omega_{k+2}(\bx)\,.
\end{equation*}
It is straightforward to check that
\begin{equation*}
    \supp\, w_k\subset B(\bx_0, \varepsilon_{k+1}) \qquad \textrm{ and } \qquad w_k(\bx)=1 \textrm{ for all } \bx\in B(\bx_0, \varepsilon_{k})
\end{equation*}
for each $1\leq k\leq N(\nu)$. Furthermore, for all $\bx \in B(\bx_0, \varepsilon_{k+1})$ and $\bdalpha\in \N^n$, we have
 \begin{equation*}
     |\partial^{\bdalpha}w_k(\bx)|\ll_{\bdalpha} \big(2^{-k}\varepsilon_0\big)^{-|\bdalpha|}\leq \big(2^{-N}\varepsilon_0\big)^{-|\bdalpha|}\ll (\nu^c\,\varepsilon_0)^{-|\bdalpha|},
    \end{equation*}
where $c := \frac{\log\, 2}{\log\, 5/4}$ and in the last inequality we used \eqref{eq: codim R pf 2}.
This ensures that, in the language of Definition~\ref{dfn: weight ext}, there exists some $0 < h = h(\nu) \leq 1$ and $\rho = \rho(\nu) \geq 1$ such that 
\begin{equation}\label{eq: codim R pf 3}
 w_k \in \cW_{\cM}(\rho) \quad \textrm{and} \quad   w_k\in \cW_{\cM}(w_{k-1}, h) \quad \textrm{for all $1\leq k\leq N$.}
\end{equation}

Trivially, $\fkN_{\cM, w_N}(Q, \bddelta) \ll Q^{n+1}$ holds for all $\bddelta \in (0, 1/4)^R$. Thus, provided $A \geq 1$ are suitably chosen,
\begin{equation*}
\fkN_{\cM, w_N}(Q, \bddelta) \leq A\big( \bddelta^{\times}Q^{n+1} +  \bddelta^{\times} (\bddelta^{\wedge})^{-R} Q^{\beta} \log^{\gamma} Q\big) 
\end{equation*}
holds for $\beta = n+1$ and $\gamma = 0$. We may then alternatively apply Lemma~\ref{lem: codim R self imp} and Lemma~\ref{lem: codim R self imp} to deduce better and better bounds for $\fkN_{\cM, w}(Q, \bddelta)$. In particular, if we recursively define
\begin{align*}
    \beta_0 &:= n+1,& \qquad \qquad \beta_{j+1} &:= n + 1 - \frac{nR}{2 \alpha_{j+1} - n} \\
    \alpha_{j+1} &:= n + R - \frac{n}{2 \beta_j - n}, & \qquad \qquad &=n+1 - \frac{Rn}{2R +n(1- \frac{2}{2\beta_j - n})},
\end{align*}
then the observations in Remark~\ref{rmk: alpha beta} ensure that 
\begin{equation*}
   \frac{n(n+R+1)}{n+2R} < \beta_j \leq n+1 \qquad \textrm{and} \qquad\frac{n(n+R+1)}{n+2} < \alpha_j \leq n +R
\end{equation*}
 for all $j \in \N_0$. By the argument described above, we conclude that there exists some constant $C_{\nu} \geq 1$ and a sequence of nonnegative exponents $(\gamma_j)_{j=0}^N$ such that 
\begin{equation}\label{eq: codim R pf 4}
\fkN_{\cM, w_{N-j}}(Q, \bddelta) \leq C_{\nu}^jA \big(\bddelta^{\times}Q^{n+1} +  \bddelta^{\times} (\bddelta^{\wedge})^{-R} Q^{\beta_j} \log^{\gamma_j} Q\big)  \qquad \textrm{for $0 \leq j \leq N$.} 
\end{equation}
Here, $\gamma_j$ depends only on $n$, $R$ and $j$. Note that the choice of weights, and in particular \eqref{eq: codim R pf 3}, allow us to ensure that the hypotheses of Lemma~\ref{lem: codim R self imp} and Lemma~\ref{lem: codim R self imp *} are satisfied at each stage.

It was shown in \cite[pp. 19--21]{Srivastava2025} that 
\begin{equation*}
    \beta_N - n - 1 + \frac{(n+2)R}{n+2R}\leq \left(\frac{4}{5}\right)^N R < \frac{\nu}{2},
\end{equation*}
where the last step is due to our choice of $N$ from \eqref{eq: codim R pf 2}. Furthermore, since $N$ depends only on $\nu$ and $R$, it follows that
\begin{equation*}
    C_{\nu}^N \ll_{\nu} 1 \qquad \textrm{and} \qquad \log^{\gamma_N} Q \ll_{\nu} Q^{\nu/2}.
\end{equation*}
Consequently, the $j = N$ case of \eqref{eq: codim R pf 4} combined with the above observations yields \eqref{eq: codim R pf 1}. This establishes Theorem \ref{thm: aniso refined}.
\end{proof}

%%%%%%%%%%%%%%%%%%%%%%%%%%%%%%%%%%%%%%%%%%%%%%%%%%%%%%%%%%%%%%%%%%%%%%%%%%%%%%%%%%%%%%%%%%%%%%%%

%   The curvature condition revisited

%%%%%%%%%%%%%%%%%%%%%%%%%%%%%%%%%%%%%%%%%%%%%%%%%%%%%%%%%%%%%%%%%%%%%%%%%%%%%%%%%%%%%%%%%%%%%%%%

\section{The curvature condition revisited}\label{sec: examples}

Here we develop the theory of the conditions (CC) and ($\C$-CC) by proving Lemma~\ref{lem: ND vs CND} and deriving an implicit formulation of the curvature condition. We then discuss the details of the specific case of the complex sphere, as introduced in Example~\ref{ex: sphere} and Example~\ref{ex: sphere Gauss} above.

%%%%%%%%%%%%%%%%%%%%%%%%%%%%%%%%%%%%%%%%%%%%%%%%%%%%%%%%%%%%%%%%%%%%%%%%%%%%%%%%%%%%%%%%%%%%%%%%

%   Real and complex curvature conditions

%%%%%%%%%%%%%%%%%%%%%%%%%%%%%%%%%%%%%%%%%%%%%%%%%%%%%%%%%%%%%%%%%%%%%%%%%%%%%%%%%%%%%%%%%%%%%%%%

\subsection{Real vs complex curvature conditions} We begin by proving the identity from Lemma~\ref{lem: ND vs CND}, relating the conditions (CC) and ($\C$-CC). 

\begin{proof}[Proof (of Lemma~\ref{lem: ND vs CND})] For $1 \leq j \leq 2m$, let $c_j$ and $d_j$ denote the $j$th column of $u''$ and $v''$, respectively, so that
\begin{equation*}
  u''  = 
    \begin{pmatrix}
        c_1 & \cdots & c_m & c_{m+1} & \cdots & c_{2m}
    \end{pmatrix}, \quad
    v'' =
    \begin{pmatrix}
        d_1 & \cdots & d_m & d_{m+1} & \cdots & d_{2m}
    \end{pmatrix}  .
\end{equation*}
Recall that the Cauchy--Riemann equations state that $u_{x_j} = v_{y_j}$ and $u_{y_j} = - u_{x_j}$ for all $1 \leq j \leq m$. Using this, we see that
\begin{equation}\label{eq: ND vs CND 1}
    v'' =
    \begin{pmatrix}
        -c_{m+1} & \cdots & -c_{2m} & c_1 & \cdots & c_m
    \end{pmatrix}.
\end{equation}

By the multilinearity of the determinant,
\begin{equation}\label{eq: ND vs CND 1.5}
\det \bigl(\theta_1 u'' + \theta_2 v''\bigr) = \sum_{S\subseteq \{1, \dots, 2m\}} \theta_1^{2m - \#S} \theta_2^{\#S} \det M_S    
\end{equation}
where $M_S$ is the $2m \times 2m$ matrix whose $j$th column is equal to $c_j$ if $j \notin S$ and is equal to $d_j$ if $j \in S$ for $1 \leq s \leq 2m$. Furthermore, using \eqref{eq: ND vs CND 1}, we see that $\det M_S = 0$ unless $S$ is of the form 
\begin{equation}\label{eq: ND vs CND 2}
 S = X \cup \{j + m : j \in X \} \qquad \textrm{for some $X \subseteq \{1, \dots, m\}$.}
\end{equation}
Indeed, if this is not the case, then it follows from \eqref{eq: ND vs CND 1} that two of the columns of $M_S$ agree up to a sign change. On the other hand, if $S$ is of the form \eqref{eq: ND vs CND 2}, then $\det M_S = \det u''$. Indeed, in this case, the columns of $M_S$ are formed by a permutation of the columns of $u''$ of signature $\#X$ (to form $M_S$ from $u''$ we transpose $\#X$ columns) and changing the sign of $\#X$ many columns. The combined effect of the permuting the columns and changing the sign of the columns is that the determinant changes sign $2\#X$ times. Combining these observations with \eqref{eq: ND vs CND 1.5}, we conclude that 
\begin{equation}\label{eq: ND vs CND 3}
 \det \bigl(\theta_1 u'' + \theta_2 v''\bigr) = \sum_{X \subseteq \{1,\dots, m\}} \theta_1^{2(m - \#X)} \theta_2^{2\#X} \det  u'' =   
(\theta_1^2 + \theta_2^2)^m \det  u''.
\end{equation}  

Now consider the holomorphic map $F \colon U \to \C^m$ given by 
\begin{equation*}
F := (\partial_{\bdz} f) = (\partial_{z_1} f, \ldots, \partial_{z_m} f).  
\end{equation*}
We can identify this with a real map $F_{\R} \colon U_{\R} \to \R^{2m}$ which, by the Cauchy--Riemann equations, has components
\begin{equation*}
F_{\R} =  (u_{x_1}, - u_{y_1}, \ldots, u_{x_n}, -u_{y_n})
\end{equation*}
Then the complex Jacobian determinant $JF$ of $F$ and the real Jacobian determinant $J_{\R}F$ of $F_{\R}$ satisfy
\begin{equation*}
    JF = \det \partial_{\bdz \bdz}f \qquad \textrm{and} \qquad J_{\R}F = (-1)^m\det u''
\end{equation*}
By, for instance, \cite[p.11, eq. (8)]{Rudin_book},  the above real and complex Jacobian determinants are related by the formula $J_{\R}F(\bdx, \bdy) = |JF(\bdx + i\bdy)|^2$. Thus,
\begin{equation}\label{eq: ND vs CND 4}
    (-1)^m \det  u''(\bdx, \bdy) = \ \bigl| \det \partial_{\bdz \bdz} f(\bdx + i \bdy) \bigr|^2.
\end{equation} 
Hence, combining \eqref{eq: ND vs CND 3} and \eqref{eq: ND vs CND 4}, we obtain
\begin{equation*}
 \det \bigl(\theta_1 u''(\bdx, \bdy) + \theta_2 v''(\bdx, \bdy) \bigr) = 
(-1)^m \, (\theta_1^2 + \theta_2^2)^m \ \bigl| \det \partial_{\bdz \bdz}f(\bdx + i \bdy) \bigr|^2   
\end{equation*}
for $(\bdx, \bdy) \in U_{\R}$, as required.
\end{proof}

%%%%%%%%%%%%%%%%%%%%%%%%%%%%%%%%%%%%%%%%%%%%%%%%%%%%%%%%%%%%%%%%%%%%%%%%%%%%%%%%%%%%%%%%%%%%%%%%

%   An implicit description of the curvature condition

%%%%%%%%%%%%%%%%%%%%%%%%%%%%%%%%%%%%%%%%%%%%%%%%%%%%%%%%%%%%%%%%%%%%%%%%%%%%%%%%%%%%%%%%%%%%%%%%

\subsection{An implicit description of the curvature condition}\label{subsec: implicit CC}

Let $n$, $R \in \N$ and $\Phi = (\Phi_1, \dots, \Phi_R) \colon \R^{n + R}  \to \R^R$ be a smooth function. Suppose $\Phi$ is regular in the sense that 
\begin{equation*}
    \mathrm{rank} \,\partial_{\bdu} \Phi(\bdu) = R \qquad \textrm{wherever $\bdu \in \R^{n+R}$ satisfies} \qquad \Phi(\bdu) = \bdzero.  
\end{equation*}
By the implicit function theorem, $\cM := \{\bdu \in \R^{n+R} : \Phi(\bdu) = \bdzero\}$ is a smooth submanifold of $\R^{n+R}$ of dimension $n$, at least provided it is nonempty.

Fix $\bdu_0 = (\bdx_0, \bdy_0) \in \cM \subset \R^n \times \R^R$ and, by relabelling coordinates, assume that 
\begin{equation*}
\det \partial_{\bdy} \Phi(\bdx, \bdy)|_{(\bdx, \bdy) = (\bdx_0, \bdy_0)} \neq 0.    
\end{equation*}
In a neighbourhood of $\bdu_0$, the manifold $\cM$ admits a graph parametrisation as in \eqref{eq: real graph} of some smooth function $f = (f_1, \dots, f_R) \colon U \to \R^R$, where $U \subseteq \R^n$ is an open neighbourhood of $\bdx_0$, and
\begin{equation}\label{eq: regular map}
    \det \partial_{\bdy} \Phi(\bdx, \bdy) \neq 0 \quad \textrm{for all $(\bdx, \bdy) \in U$} \qquad \textrm{and} \qquad f(\bdx_0) = \bdy_0.
\end{equation}

The goal of this subsection is to relate the condition \eqref{eq: CC} expressed in terms of the graphing function $f$ to a condition on $\Phi$. To this end, given $\bdt \in \R^R$, we define
\begin{equation*}
    \bdt \cdot f'' := \sum_{\ell = 1}^R t_{\ell} f_{\ell}'' \in \mathrm{Mat}_{n \times n}(\R) \quad \textrm{and} \quad \bdt \cdot \partial_{\bdu\bdu}^2\Phi := \sum_{\ell = 1}^R t_{\ell} \partial_{\bdu\bdu}^2\Phi_{\ell} \in \mathrm{Mat}_{(n+R) \times (n+R)}(\R),
\end{equation*}
where $\partial_{\bdu\bdu}^2\Phi_{\ell}$ denotes the hessian determinant of $\Phi_{\ell}$ with respect to all $n + R$ variables. We now give an implicit formulation of the curvature condition (CC).

\begin{lemma}\label{lem: implicit CC} With the above setup, for $\bds \in \R^R$ and $\bdt := (\partial_{\bdy} \Phi)(\bdx, f(\bdx))^{\top} \bds$, we have
\begin{equation}\label{eq: implict CC}
    |\det \partial_{\bdy} \Phi(\bdx, f(\bdx))|^2|\det \bdt \cdot f''(\bdx)| = \left| \det
    \begin{bmatrix}
        \bds \cdot \partial_{\bdu \bdu}^2 \Phi(\bdx, f(\bdx)) & \partial_{\bdu} \Phi(\bdx, f(\bdx))^{\top} \\
        \partial_{\bdu} \Phi(\bdx, f(\bdx)) & \bdzero
    \end{bmatrix}
    \right| 
\end{equation}
for all $\bdx \in U$. 
\end{lemma}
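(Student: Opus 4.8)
The plan is to reduce \eqref{eq: implict CC} to a routine linear-algebra identity obtained after differentiating the defining relation $\Phi(\bdx, f(\bdx)) = \bzero$ twice. Fix $\bdx \in U$, set $\psi := \bds \cdot \Phi = \sum_{\ell = 1}^R s_\ell \Phi_\ell$, and introduce the shorthand $G := \partial_{\bdy}\Phi(\bdx, f(\bdx))$, $H := \partial_{\bdx}\Phi(\bdx, f(\bdx))$, $P := f'(\bdx)$, together with the Hessian blocks
\[
A := \partial^2_{\bdx \bdx}\psi, \qquad B := \partial^2_{\bdx \bdy}\psi, \qquad C := \partial^2_{\bdy \bdy}\psi,
\]
all evaluated at $(\bdx, f(\bdx))$, so that $\bds \cdot \partial^2_{\bdu \bdu}\Phi = \begin{pmatrix} A & B \\ B^{\top} & C\end{pmatrix}$ and $\partial_{\bdu}\Phi = \begin{pmatrix} H & G\end{pmatrix}$ there. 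First I would differentiate $\psi(\bdx, f(\bdx)) = 0$ once in $\bdx$: this recovers the implicit-function identity $P = -G^{-1}H$, the matrix $G$ being invertible by \eqref{eq: regular map}, and shows that the columns of $\binom{I_n}{P}$ span $\ker \partial_{\bdu}\Phi$.

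Next I would differentiate $\psi(\bdx, f(\bdx)) = 0$ a second time. The decisive point is that the only term in which $f''$ appears is $\sum_{k} (\partial_{y_k}\psi)\,\partial^2_{x_i x_j} f_k$, and since $\partial_{y_k}\psi = \sum_\ell s_\ell\, \partial_{y_k}\Phi_\ell = \bigl(\partial_{\bdy}\Phi^{\top}\bds\bigr)_k = t_k$, this equals $(\bdt \cdot f'')_{ij}$; thus the vector $\bdt$ prescribed in the statement emerges automatically. Collecting the remaining chain-rule terms yields the matrix identity
\[
\bdt \cdot f''(\bdx) = -\bigl(A + BP + P^{\top}B^{\top} + P^{\top}CP\bigr) = -\binom{I_n}{P}^{\!\top}\bigl(\bds \cdot \partial^2_{\bdu \bdu}\Phi\bigr)\binom{I_n}{P},
\]
everything evaluated at $(\bdx, f(\bdx))$.

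It then remains to evaluate the bordered determinant on the right-hand side of \eqref{eq: implict CC}, which I would do by block Gaussian elimination exploiting the invertibility of $G$. Writing
\[
M = \begin{bmatrix} A & B & H^{\top} \\ B^{\top} & C & G^{\top} \\ H & G & \bzero \end{bmatrix} \qquad (\text{block sizes } n,\,R,\,R),
\]
I substitute $H = -GP$, $H^{\top} = -P^{\top}G^{\top}$ and perform, in order, the determinant-preserving operations: add $P^{\top}$ times block-row $2$ to block-row $1$; add block-column $2$ right-multiplied by $P$ to block-column $1$; subtract block-column $3$ right-multiplied by $G^{-\top}(B^{\top} + CP)$ from block-column $1$. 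This brings $M$ to block upper-triangular form with diagonal blocks $A + BP + P^{\top}B^{\top} + P^{\top}CP$ (which equals $-\bdt\cdot f''$ by the previous step) and $\begin{bmatrix} C & G^{\top} \\ G & \bzero\end{bmatrix}$, so $\det M = \det\bigl(A + BP + P^{\top}B^{\top} + P^{\top}CP\bigr)\cdot\det\begin{bmatrix} C & G^{\top} \\ G & \bzero\end{bmatrix}$. A swap of the two block-rows of the latter matrix shows $\det\begin{bmatrix} C & G^{\top} \\ G & \bzero\end{bmatrix} = (-1)^R(\det G)^2$, and hence $|\det M| = |\det(\bdt\cdot f'')|\,|\det G|^2 = |\det \partial_{\bdy}\Phi(\bdx, f(\bdx))|^2\,|\det \bdt\cdot f''(\bdx)|$, which is exactly \eqref{eq: implict CC}. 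Since the statement is phrased with absolute values, the signs $(-1)^n$, $(-1)^R$ play no role.

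I expect the only genuine obstacle to be the second differentiation: one has to apply the multivariable chain rule to $\psi(\bdx, f(\bdx))$ with care, keeping track of the five groups of terms and of the fact that the partial derivatives of $\psi$ are themselves evaluated along the graph $\bdx \mapsto (\bdx, f(\bdx))$. Everything preceding it (the first differentiation) and following it (the block elimination and the $2R\times 2R$ determinant) is standard, and the identity $\partial_{\bdy}\psi = \bdt$ is precisely what makes the computation land on the form asserted in the statement.
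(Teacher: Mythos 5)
Your proposal is correct and follows essentially the same route as the paper: differentiate $\Phi(\bdx,f(\bdx))=\bdzero$ twice, use $\partial_{\bdx}f = -(\partial_{\bdy}\Phi)^{-1}\partial_{\bdx}\Phi$ and $\partial_{\bdy}(\bds\cdot\Phi)=\bdt$ to identify $-\bdt\cdot f''$ with the Hessian of $\bds\cdot\Phi$ compressed to the tangent directions, and then convert this to the bordered determinant by Schur-complement-type block manipulations. The only differences are cosmetic: you run the block linear algebra as an elimination on the bordered matrix (and write the mixed second-order term in the symmetric form $BP+P^{\top}B^{\top}$), whereas the paper proceeds in the opposite direction, expanding the Schur complement and factoring out $(\partial_{\bdy}\Phi)^{\pm 1}$ to reach the same identity.
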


By \eqref{eq: regular map}, we therefore see that the implicit form of condition (CC) is
\begin{equation*}
    \det
    \begin{bmatrix}
        \sum_{\ell = 1}^R\theta_{\ell} \partial_{\bdu \bdu}^2 \Phi_{\ell}(\bdu) & \partial_{\bdu} \Phi(\bdu)^{\top} \\
        \partial_{\bdu} \Phi(\bdu) & \bdzero
    \end{bmatrix} \neq 0 \qquad
    \textrm{for all $\bdu \in \cM$, $(\theta_1, \dots, \theta_R) \in \bbS^{R-1}$.}
\end{equation*}

\begin{proof}[Proof (of Lemma~\ref{lem: implicit CC})] By implicit differentiation, the Jacobian $\partial_{\bdx} f$ satisfies
\begin{equation}\label{eq: implicit 1}
  \bdzero = \partial_{\bdx} \Phi(\bdx, f(\bdx)) + (\partial_{\bdy} \Phi)(\bdx, f(\bdx)) \partial_{\bdx} f(\bdx) \qquad \textrm{for $\bdx \in U$.}
\end{equation}
Henceforth, to ease notation, we shall write $\bdz = (\bdx, f(\bdx))$ for $\bdx \in U$. We fix $1 \leq \ell \leq R$ and differentiate the $\ell$th row of \eqref{eq: implicit 1} to deduce that
\begin{align*}
    \bdzero &= \partial_{\bdx \bdx}^2 \Phi_{\ell}(\bdx, f(\bdx)) + 2 \sum_{k=1}^R (\partial_{\bdx} \partial_{y_k} \Phi_{\ell})(\bdx, f(\bdx)) \partial_{\bdx} f_k(\bdx) \\
    & \qquad + \sum_{j, k = 1}^R (\partial_{\bdx} f_j(\bdx))^{\top}(\partial_{y_jy_k}^2 \Phi_{\ell})(\bdx, f(\bdx)) \partial_{\bdx} f_k(\bdx) + \sum_{k=1}^R (\partial_{y_k} \Phi_{\ell})(\bdx, f(\bdx)) f_k''(\bdx)
\end{align*}
for $\bdx \in U$. Now let $\bds = (s_1, \dots, s_R) \in \R^R$ and $\bdt = (t_1, \dots, t_R) := (\partial_{\bdy} \Phi)(\bdx, f(\bdx))^{\top} \bds$. Multiplying both sides of the above equation by $t_{\ell}$, rearranging and summing in $\ell$, one deduces that
\begin{align}\label{eq: implicit 2}
    -  \bdt \cdot f''(\bdx) &= 
    (\bds \cdot \partial_{\bdx \bdx}^2 \Phi)(\bdx, f(\bdx))  + 2 (\bds \cdot \partial_{\bdx \bdy}^2 \Phi)(\bdx, f(\bdx)) \partial_{\bdx} f(\bdx) \\
    \nonumber 
    & \qquad + (\partial_{\bdx} f(\bdx))^{\top}( \bds \cdot \partial_{\bdy \bdy}^2 \Phi)(\bdx, f(\bdx)) \partial_{\bdx} f(\bdx)
\end{align}
where 
\begin{gather*}
\bds \cdot \partial_{\bdx \bdx}^2 \Phi := \sum_{\ell = 1}^R s_{\ell} \partial_{\bdx \bdx}^2 \Phi_{\ell}  \in \mathrm{Mat}_{n \times n}(\R), \quad
   \bds \cdot \partial_{\bdx \bdy}^2 \Phi := \sum_{\ell = 1}^R s_{\ell} \partial_{\bdx \bdy}^2 \Phi_{\ell}  \in \mathrm{Mat}_{n \times R}(\R), \\ 
   \bds \cdot \partial_{\bdy \bdy}^2 \Phi := \sum_{\ell = 1}^R s_{\ell} \partial_{\bdy \bdy}^2 \Phi_{\ell}  \in \mathrm{Mat}_{R \times R}(\R).
\end{gather*}

 In light of \eqref{eq: implicit 1}, we may replace each occurrence of $\partial_{\bdx} f$ in \eqref{eq: implicit 2} with $-(\partial_{\bdy} \Phi)^{-1} \partial_{\bdx} \Phi$. In particular, the right-hand side of \eqref{eq: implicit 2} is equal to 
 \begin{equation}\label{eq: implicit 3}
     (\bds \cdot \partial_{\bdx \bdx}^2 \Phi) + 2 (\bds \cdot \partial_{\bdx \bdy}^2 \Phi)(\partial_{\bdy} \Phi)^{-1} \partial_{\bdx} \Phi + (\partial_{\bdy} \Phi)^{-\top} (\partial_{\bdx}\Phi)^{\top}( \bds \cdot \partial_{\bdy \bdy}^2 \Phi) (\partial_{\bdy} \Phi)^{-1}\partial_{\bdx} \Phi
 \end{equation}
 evaluated at $\bdz$. Recall the block matrix identity 
 \begin{equation*}
     \det\begin{bmatrix}
         \bdA & \bdB \\
         \bdC & \bdD
     \end{bmatrix}
     = \det\begin{bmatrix}
        \bdA & \bdB \\
         \bdC & \bdD
     \end{bmatrix} \det\begin{bmatrix}
         \bdI & \bdzero \\
         -\bdD^{-1}\bdC & \bdI
     \end{bmatrix} =
    \det \bdD \det(\bdA - \bdB \bdD^{-1} \bdC)
 \end{equation*}
 for $\bdA \in \mathrm{Mat}_{n \times n}(\R)$, $\bdB \in \mathrm{Mat}_{n \times 2R}(\R)$, $\bdC \in \mathrm{Mat}_{2R \times n}(\R)$ and $\bdD \in \mathrm{GL}_{2R}(\R)$. We apply this
 with $\bdA := \bds \cdot \partial_{\bdx \bdx}^2 \Phi$ to deduce that the determinant of the matrix in \eqref{eq: implicit 3} is equal to
 \begin{equation*}
    (-1)^R\det \begin{bmatrix}
        \bds \cdot \partial_{\bdx \bdx}^2 \Phi & (\bds \cdot \partial_{\bdx \bdy}^2 \Phi) (\partial_{\bdy} \Phi)^{-1} & (\partial_{\bdx} \Phi)^{\top} \\
        (\partial_{\bdy} \Phi)^{-\top} (\bds \cdot \partial_{\bdy \bdx}^2 \Phi) & (\partial_{\bdy} \Phi)^{-\top} (\bds \cdot \partial_{\bdy \bdy}^2 \Phi) (\partial_{\bdy} \Phi)^{-1} & \bdI \\
        \partial_{\bdx} \Phi & \bdI & \bdzero
    \end{bmatrix}.
\end{equation*}
Furthermore, the matrix in the above display can be factored as
\begin{equation*}
     \begin{bmatrix}
       \bdI & \bdzero & \bdzero \\
       \bdzero &(\partial_{\bdy} \Phi)^{-\top} & \bdzero \\
       \bdzero & \bdzero & \bdI
     \end{bmatrix}
     \begin{bmatrix}
        \bds \cdot \partial_{\bdx \bdx}^2 \Phi & \bds \cdot \partial_{\bdx \bdy}^2 \Phi & (\partial_{\bdx} \Phi)^{\top} \\
         \bds \cdot \partial_{\bdy \bdx}^2 \Phi & \bds \cdot \partial_{\bdy \bdy}^2 \Phi & (\partial_{\bdz} \Phi)^{\top}  \\
        \partial_{\bdx} \Phi & \partial_{\bdy}  \Phi & \bdzero.
    \end{bmatrix}
    \begin{bmatrix}
       \bdI & \bdzero & \bdzero \\
       \bdzero &(\partial_{\bdy} \Phi)^{-1} & \bdzero \\
       \bdzero & \bdzero & \bdI
     \end{bmatrix}.
\end{equation*}
From this, recalling \eqref{eq: implicit 2},   we obtain the desired formula \eqref{eq: implict CC}. 
\end{proof}

The above argument also works in the complex setting. In particular, let $m \in \N$ and $\Phi \colon \C^{m+1} \to \C$ be a homomorphic function, which is regular in the sense that 
\begin{equation*}
    \partial_w \Phi(\bdz, w) \neq 0 \qquad \textrm{whenever $(\bdz, w) \in \C^m \times \C$ satisfies $\Phi(\bdz, w) = 0$.}
\end{equation*}
By the homomorphic implicit function theorem, $\cM := \{ \bdzeta \in \C^{m+1} : \Phi(\bdzeta) = 0\}$ is an analytic hypersurface in $\C^{m+1}$, at least provided it is nonempty. Moreover, $\cM$ can be locally parameterised as a graph as in \eqref{eq: analytic M}, where $U \subseteq \C^m$ is some open set and $f \colon U \to \C$ is a holomorphic function. Arguing as in the proof of Lemma~\ref{lem: implicit CC}, we have
\begin{equation*}
    |\partial_w \Phi(\bdz, f(\bdz))|^3 |\det f''(\bdz)| = \left| \det
    \begin{bmatrix}
        \partial_{\bdzeta \bdzeta}^2 \Phi(\bdz, f(\bdz)) & \partial_{\bdzeta} \Phi(\bdz, f(\bdz))^{\top} \\
        \partial_{\bdzeta} \Phi(\bdz, f(\bdz)) & 0
    \end{bmatrix}
    \right| 
\end{equation*}
for all $\bdz \in U$. We therefore see that the implicit form of condition ($\C$-CC) is
\begin{equation}\label{eq: cplx implicit CC}
    \det
    \begin{bmatrix}
     \partial_{\bdzeta \bdzeta}^2 \Phi(\bdzeta) & \partial_{\bdzeta} \Phi(\bdzeta)^{\top} \\
        \partial_{\bdzeta} \Phi(\bdzeta) & 0
    \end{bmatrix} \neq 0 \qquad
    \textrm{for all $\bdzeta \in \cM$.}
\end{equation}
We shall use this to study the complex sphere in the next subsection. 

%%%%%%%%%%%%%%%%%%%%%%%%%%%%%%%%%%%%%%%%%%%%%%%%%%%%%%%%%%%%%%%%%%%%%%%%%%%%%%%%%%%%%%%%%%%%%%%%

%   The complex sphere

%%%%%%%%%%%%%%%%%%%%%%%%%%%%%%%%%%%%%%%%%%%%%%%%%%%%%%%%%%%%%%%%%%%%%%%%%%%%%%%%%%%%%%%%%%%%%%%%

\subsection{The complex sphere} We return to the complex sphere from Example~\ref{ex: sphere}, which we write as
\begin{equation*}
\bbS_{\C}^m := \big\{(\bdz, w) \in \C^m \times \C : \bdz \in B \textrm{ and } \Phi(\bdz, w) = 0 \big\}  
\end{equation*}
where the defining function $\Phi \colon \C^m \times \C \to \C$ is given by
\begin{equation*}
    \Phi(z_1, \dots, z_m, w) := z_1^2 + \cdots + z_m^2 + w^2 - 1 \qquad \textrm{for $(z_1, \dots, z_m, w) \in \C^m \times \C$.}
\end{equation*}
Note that, provided the radius $r$ of $B = B(0,r)$ is chosen sufficiently small, $\partial_w\Phi(\bdzeta) \neq 0$ for all $\bdzeta \in \bbS^m_{\C}$, and $\Phi$ is therefore regular as a complex defining function. Furthermore, up to a factor of $2^{2m+2}$, the matrix determinant in \eqref{eq: cplx implicit CC} is given by
\begin{equation*}
     \det\begin{bmatrix}
     \bdI & \bdzeta^{\top} \\
        \bdzeta & 0
    \end{bmatrix}
    = -(\zeta_1^2 + \cdots + \zeta_{m+1}^2) = -1 \neq 0
\end{equation*}
for all $\bdzeta = (\zeta_1, \dots, \zeta_{m+1}) \in \bbS_{\C}^m$. Thus, by the observations in the previous section, $\bbS_{\C}^m$ is an analytic hypersurface which satisfies ($\C$-CC).

As in the discussion in Example~\ref{ex: sphere},  we identify $\bbS_{\C}^m$ with a codimension $2$ real submanifold of $\R^{2m+2}$. By Lemma~\ref{lem: ND vs CND} and the preceding observations, this real submanifold satisfies (CC).

%%%%%%%%%%%%%%%%%%%%%%%%%%%%%%%%%%%%%%%%%%%%%%%%%%%%%%%%%%%%%%

%  Counting Gaussian rationals: Proof of Theorem \ref{thm: main cplx}

%%%%%%%%%%%%%%%%%%%%%%%%%%%%%%%%%%%%%%%%%%%%%%%%%%%%%%%%%%%%%%%%%%%%%%%%%%%%%%%%%%%%%%%%%%%%%%%%

\section{Counting Gaussian rationals: the proof of Theorem \ref{thm: main cplx}}\label{sec: Guassian count}

%%%%%%%%%%%%%%%%%%%%%%%%%%%%%%%%%%%%%%%%%%%%%%%%%%%%%%%%%%%%%%%%%%%%%%%%%%%%%%%%%%%%%%%%%%%%%%%%

%  Preliminaries

%%%%%%%%%%%%%%%%%%%%%%%%%%%%%%%%%%%%%%%%%%%%%%%%%%%%%%%%%%%%%%%%%%%%%%%%%%%%%%%%%%%%%%%%%%%%%%%%

\subsection{Preliminaries}\label{subsec: cplx prelim} Suppose $\cM \subset \C^{m+1}$ is a smooth, analytic hypersurface satisfying the complex curvature condition. By working locally, we may assume 
\begin{equation*}
    \cM = \{(\bdz, f(\bdz)) : \bdz \in U \}
\end{equation*}
where $U \subset \C^m$ is a bounded, open set and $f \colon U \to \C$ is a holomorphic function satisfying ($\C$-CC). By a quantitative version of the inverse function theorem (see, for example,  \cite[\S8]{Christ1985}), we may cover $U$ by small balls $B(\bdz_0, \tau^2)$ for $0 < \tau < 1$ such that
\begin{itemize}
    \item There exist constants $C_1$, $C_2 \geq 1$ depending only on $f$ such that
  \begin{multline}\label{eq: cplx ift 1}
      (\partial_{\bdz} f)\bigl(B(\bdz_0, \tau^2)\bigr) \subseteq B((\partial_{\bdz} f)(\bdz_0), C_1 \tau^2) \\
   \subseteq B((\partial_{\bdz} f)(\bdz_0), 2 C_1 \tau^2) \subseteq 
(\partial_{\bdz} f)\bigl(B(\bdz_0, C_2 \tau)\bigr);
\end{multline}
\item $\partial_{\bdz} f$ is a diffeomorphism between $B(\bdz_0, C_2 \tau)$ and
$(\partial_{\bdz} f)\bigl(B(\bdz_0, C_2 \tau)\bigr)$.
\end{itemize}

%%%%%%%%%%%%%%%%%%%%%%%%%%%%%%%%%%%%%%%%%%%%%%%%%%%%%%%%%%%%%%%%%%%%%%%%%%%%%%%%%%%%%%%%%%%%%%%%

%  Smooth counting function

%%%%%%%%%%%%%%%%%%%%%%%%%%%%%%%%%%%%%%%%%%%%%%%%%%%%%%%%%%%%%%%%%%%%%%%%%%%%%%%%%%%%%%%%%%%%%%%%

\subsubsection*{Smooth counting function} Using a smooth partition of unity, we can reduce matters to studying the following smooth variant of $N_{\cM}^{\C}(Q, \bddelta)$. Let $w \colon \C^m \to [0, 1]$ be smooth and supported in one of the balls $B(\bdz_0, \tau^2)$ as above. Given $Q \in \N$ and $\bddelta = (\delta_{\fkR}, \delta_{\fkI}) \in (0,1/4)^2$, we define 
\begin{equation*}
    \fkN_{\cM, w}^{\C}(Q, \bddelta) := \mathop{\sum_{q \in \Z[i],\, 0 < |q|_{\infty}\le Q} \ \sum_{\bda\in \Z[i]^m}}_{\|\fkR qf(\bda/q)\|\le \delta_{\fkR},\, \|\fkI qf(\bda/q)\| \le \delta_{\fkI}} w(\bda/q).
\end{equation*}
If $\bddelta = (\delta, \delta)$ for some $\delta \in (0, 1/4)$, then we simply write $\fkN_{\cM, w}^{\C}(Q, \delta)$ for $\fkN_{\cM, w}^{\C}(Q, \bddelta)$. Thus, to prove Theorem~\ref{thm: main cplx} for $m \geq 2$, our goal is to show
\begin{equation*}
\fkN_{\cM, w}^{\C}(Q, \delta)  \ll \delta^2 Q^{2m+1}  +  Q^{2m} \cE(Q),
\end{equation*}
with an appropriate modification in the $m=1$ case.

%%%%%%%%%%%%%%%%%%%%%%%%%%%%%%%%%%%%%%%%%%%%%%%%%%%%%%%%%%%%%%%%%%%%%%%%%%%%%%%%%%%%%%%%%%%%%%%%

%  Duality

%%%%%%%%%%%%%%%%%%%%%%%%%%%%%%%%%%%%%%%%%%%%%%%%%%%%%%%%%%%%%%%%%%%%%%%%%%%%%%%%%%%%%%%%%%%%%%%%

\subsubsection*{Duality} Define the quadratic form 
\begin{equation}\label{eq: quad form}
   \bdz \cdot \bdw := z_1 w_1 + \cdots + z_m w_m \qquad \textrm{for} \qquad \bdz = (z_1, \dots, z_m), \, \bdw = (w_1, \dots, w_m) \in \C^n. 
\end{equation}
Note, in particular, that $\bdz \cdot \bdw$ does not correspond to the usual hermitian inner product on $\C^n$. Recall that $\partial_{\bdz} f \colon \cD \to \cD^*$ is a diffeomorphism between 
\begin{equation*}\label{eq: cplx ift 2}
   \cD := B(\bdz_0, C_2 \tau) \qquad \textrm{and} \qquad \cD^* := (\partial_{\bdz} f)(B(\bdz_0, C_2 \tau)).
\end{equation*}
 The complex Legendre transform $f^{*} \colon \cD^* \to \C$ of $f$ is then defined by
\begin{equation*}
    f^{*}(\bdw) := - f(\bdz) + \bdz \cdot \bdw \qquad \textrm{where $\bdz \in \cD$ satisfies $\bdw = \partial_{\bdz} f(\bdz)$}
\end{equation*}
for $\bdw \in \cD^*$. It is easy to check that 
\begin{equation}\label{eq: cplx L trans}
   \partial_{\bdw} f^{*} (\bdw) = (\partial_{\bdz} f)^{-1}(\bdw), \ \ f^{**}(\bdz) = f(\bdz) \ \ {\rm and} \ \ 
(f^{*})''(\bdw) = (f'')^{-1} ((\partial_{\bdz} f)^{-1}(\bdw)). 
\end{equation}
Hence, by our hypothesis ($\C$-CC) on $f$, we see that $f^{*}$ also satisfies ($\C$-CC). We set
\begin{equation*}
    w^{*}(\bdw) := 
\frac{w((\partial_{\bdz} f)^{-1}(\bdw))}{|\det f''((\partial_{\bdz} f)^{-1}(\bdw))|^{1/2}}
\end{equation*}
so that $w^{**} = w$ from the properties \eqref{eq: cplx L trans} of the complex Legendre transform.

Given $Q_* \in \N$ and $\bddelta_* = (\delta_{\fkR}^*, \delta_{\fkI}^*) \in (0,1/4)^2$, we define the dual counting function
\begin{equation}\label{eq: dual count cplx}
  \fkN_{\cM^*, w^{*}}^{\C}(Q_*, \bddelta_*) := \mathop{\sum_{j \in \Z[i],\, 0 < |j|_{\infty}\le Q_*} \ \sum_{\bdk\in \Z[i]^m}}_{\|\fkR \overline{j} f(\overline{\bdk}/\,\overline{j}\,)\|\le \delta_{\fkR}^{*},\, \|\fkI \overline{j}f(\overline{\bdk}/\,\overline{j}\,)\| \le \delta_{\fkI}^*} w^{*}(\overline{\bdk}/\,\overline{j}\,).  
\end{equation}
Like in the real hypersurface case (and unlike the situation in \S\ref{sec: int dim}), the dual counting function $\fkN_{\cM^*, w^{*}}^{\C}(Q_*, \bddelta_*)$ has the same form as
$\fkN_{\cM, w}^{\C}(Q,\bddelta)$. 
%%%%%%%%%%%%%%%%%%%%%%%%%%%%%%%%%%%%%%%%%%%%%%%%%%%%%%%%%%%%%%%%%%%%%%%%%%%%%%%%%%%%%%%%%%%%%%%%

%  Self-improving mechanism

%%%%%%%%%%%%%%%%%%%%%%%%%%%%%%%%%%%%%%%%%%%%%%%%%%%%%%%%%%%%%%%%%%%%%%%%%%%%%%%%%%%%%%%%%%%%%%%%

\subsection{Self-improving mechanism}\label{subsec: cplx self improve}  The dual counting function $\fkN_{\cM^*, w^{*}}^{\C}$ is exactly the counting function $\fkN_{\cM, w}^{\C}$ with $f$ replaced by the Legendre dual $f^{*}$ (which also satisfies the curvature condition ($\C$-CC)) and $w$ replaced by $w^{*}$. Moreover, the dual counting function of the dual $\fkN_{\cM^*, w^{*}}^{\C}$ brings us back to the original counting function $\fkN_{\cM, w}^{\C}$. Hence, as in Huang's work on real hypersurfaces \cite{Huang2020}, the self-improving mechanism only involves a single lemma, the analogue of \cite[Lemma 4]{Huang2020}, showing that a bound for $\fkN_{\cM, w}^{\C}$ implies a bound for $\fkN_{\cM^*, w^{*}}^{\C}$. 

In order to state the key lemma, we observe that there exists some constant $A \geq 1$, depending only on $\cM$, such that
\begin{equation}\label{eq: choice of C0}
    \sum_{\substack{q \in \Z[i]\\ 0 < |q|_{\infty}\le Q}} \sum_{\bda\in \Z[i]^m} w(\bda/q) \leq (A/3\pi^4) Q^{2m+2}
\end{equation}
holds for all $Q \in \N$. Henceforth, $A$ denotes this fixed value.  

\begin{lemma}\label{lem: cplx self imp} Suppose that for all $\bddelta_* \in (0, 1/4)^2$ the bound
\begin{equation}\label{*bound}
\fkN_{\cM^*, w^{*}}^{\C}(Q_*, \bddelta_*) \le  A  \bddelta_*^{\times} Q_*^{2m+2} + B \, \bddelta_*^{\times}(\bddelta_*^{\wedge})^{-2} Q_*^{\beta} \log^{\gamma} Q_* 
\end{equation}
holds for some $2m < \beta \le 2m+2$, $\gamma \geq 0$ and $B \geq A \geq 1$. Then, for all $\bddelta \in (0, 1/4)^2$, the bound
\begin{equation}\label{w-bound}
\fkN_{\cM, w}^{\C}(Q,\bddelta) \le  A \bddelta^{\times} Q^{2m+2} +  C B \bddelta^{\times}(\bddelta^{\wedge})^{-2}
Q^{2m + 2 - \frac{2m}{\beta - m}} \log^{\frac{2(\gamma+1)}{m}} Q 
\end{equation}
holds. Here the constant $C \geq 1$ may depend on $\cM$ but is independent of the other parameters in the problem, including $B$. 
\end{lemma}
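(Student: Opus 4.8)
The plan is to run the Fourier-analytic induction of Huang~\cite{Huang2020}, with the Gaussian-rational modifications needed to deal with denominators in $\Z[i]$ and the bookkeeping refinements of \cite{Srivastava2025}. \emph{First}, expand via the Fej\'er kernel: with $D_{\fkR} := \floor{(2\delta_{\fkR})^{-1}}$ and $D_{\fkI} := \floor{(2\delta_{\fkI})^{-1}}$, apply \eqref{eq fejer char est} to each of the conditions $\|\fkR(q f(\bda/q))\| \le \delta_{\fkR}$ and $\|\fkI(q f(\bda/q))\| \le \delta_{\fkI}$ and expand the two kernels into exponentials; using the elementary identity $d_1 \fkR z + d_2 \fkI z = \fkR(\bar d z)$ for $d = d_1 + i d_2 \in \Z[i]$, this majorises $\fkN_{\cM, w}^{\C}(Q,\bddelta)$ by $\tfrac{\pi^4}{16}\sum_{d \in \Z[i]} c_d \sum_{q} \sum_{\bda} e\big(\fkR(\bar d q f(\bda/q))\big) w(\bda/q)$ with $0 \le c_d \le (D_{\fkR}D_{\fkI})^{-1} \le 16\,\bddelta^{\times}$. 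The $d = 0$ term is the main term, and by \eqref{eq: choice of C0} it is at most $\tfrac{A}{3}\bddelta^{\times} Q^{2m+2}$.

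\emph{Second}, for $d \ne 0$ decompose the $q$-sum dyadically and apply Poisson summation over $\Z[i]^m \cong \Z^{2m}$ in $\bda$; after the scaling $\bda = q\bdy$ one is left with $2m$-dimensional oscillatory integrals with phase $\fkR\big(q(\bar d f(\bdy) - \bar{\bdk}\cdot\bdy)\big)$. Since $\bar d f(\bdy) - \bar{\bdk}\cdot\bdy$ is holomorphic in $\bdy$ this phase is harmonic, with unique critical point $\bdy_c = (\partial_{\bdz} f)^{-1}(\bar{\bdk}/\bar d)$ present exactly when $\bar{\bdk}/\bar d$ lies in the relevant ball, and by the Cauchy--Riemann/Jacobian identity behind Lemma~\ref{lem: ND vs CND} the associated real Hessian determinant equals $|q|^{2m}|d|^{2m}|\det f''(\bdy_c)|^2$, which is nonvanishing by \emph{($\C$-CC)}; non-stationary contributions are negligible after repeated integration by parts. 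Lemma~\ref{lem: stationary phase} then applies, and evaluating the critical value through the complex Legendre transform gives $q(\bar d f(\bdy) - \bar{\bdk}\cdot\bdy)|_{\bdy_c} = -q \bar d f^{*}(\bar{\bdk}/\bar d)$, so that, up to negligible terms, the error is controlled by
\begin{equation*}
\bddelta^{\times} \sum_{0 < |d|_{\infty} \le D} |d|^{-m} \sum_{\bdk} w^{*}(\bar{\bdk}/\bar d)\, \Big| \sum_{q} |q|^{m}\, \widetilde{a}_{q,d}(\bdk)\, e\big(\fkR(-q \bar d f^{*}(\bar{\bdk}/\bar d))\big) \Big|,
\end{equation*}
where $D := \max(D_{\fkR}, D_{\fkI})$ and $\widetilde{a}_{q,d}$ absorbs $|\det f''(\bdy_c)|^{-1/2}$ and the stationary-phase amplitude; the key point is that this phase is now \emph{linear} in $q$.

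\emph{Third}, apply Lemma~\ref{lem: summation by parts} with $n = 2$ (identifying $q \in \Z[i]$ with $(\fkR q, \fkI q) \in \Z^2$ and absorbing the smooth factor $|q|^m$ and $\widetilde{a}_{q,d}$ into the symbol class) to bound the inner sum by $Q^m \min\{Q, \|\fkR(\bar d f^{*}(\bar{\bdk}/\bar d))\|^{-1}\} \min\{Q, \|\fkI(\bar d f^{*}(\bar{\bdk}/\bar d))\|^{-1}\}$. Splitting each minimum dyadically ($\|\fkR(\cdot)\| \lesssim 2^{i_1}/Q$, $\|\fkI(\cdot)\| \lesssim 2^{i_2}/Q$) and then the $d$-sum dyadically ($|d|_{\infty} \sim 2^{\ell}$), the resulting sums over $(d,\bdk)$ are precisely dual counting functions, $\sum_{0 < |d|_{\infty} \le 2^{\ell}} \sum_{\bdk:\,\cdots} w^{*}(\bar{\bdk}/\bar d) = \fkN_{\cM^{*}, w^{*}}^{\C}\big(2^{\ell}, (2^{i_1}/Q, 2^{i_2}/Q)\big)$. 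Hence $\fkN_{\cM, w}^{\C}(Q,\bddelta)$ is at most $\tfrac{A}{3}\bddelta^{\times}Q^{2m+2}$ plus a constant times $\bddelta^{\times} Q^{m+2} \sum_{i_1, i_2} 2^{-i_1 - i_2} \sum_{2^{\ell} \lesssim 1/\bddelta^{\wedge}} 2^{-\ell m}\, \fkN_{\cM^{*}, w^{*}}^{\C}(2^{\ell}, (2^{i_1}/Q, 2^{i_2}/Q))$, into which we feed \eqref{*bound} (together with the trivial bound).

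\emph{Fourth}, optimise. Using $\bddelta_{*}^{\times} = 2^{i_1+i_2}/Q^2$ and $\bddelta_{*}^{\times}(\bddelta_{*}^{\wedge})^{-2} = 2^{|i_1 - i_2|}$ for $\bddelta_{*} = (2^{i_1}/Q, 2^{i_2}/Q)$, and since $m + 2 > 0$ and $\beta - m > m > 0$, the $\ell$-sums are geometric and dominated by their top terms at $2^{\ell} \sim 1/\bddelta^{\wedge}$, the $i_1,i_2$-sums contributing only logarithmic losses. In the range $\bddelta^{\wedge} < Q^{-m/(\beta - m)}$ one argues by monotonicity of $\bddelta \mapsto \fkN_{\cM, w}^{\C}(Q,\bddelta)$: raising each component of $\bddelta$ below $Q^{-m/(\beta-m)}$ to that threshold, the output of the machinery for the enlarged parameter is, using $\bddelta^{\times}(\bddelta^{\wedge})^{-2} \ge 1$ and $\beta > 2m$, dominated by the second term of \eqref{w-bound} for the original $\bddelta$. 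In the complementary range $\bddelta^{\wedge} \ge Q^{-m/(\beta-m)}$ we have $D \lesssim Q^{m/(\beta-m)}$, the contribution of the main part of \eqref{*bound} is $\lesssim A\bddelta^{\times} Q^{m(\beta+2)/(\beta-m)}\log^2 Q \le A\bddelta^{\times}Q^{2m+2}$ (again because $\beta > 2m$), and the contribution of the term $B \bddelta_{*}^{\times}(\bddelta_{*}^{\wedge})^{-2}Q_{*}^{\beta}\log^{\gamma}Q_{*}$, after balancing against $A\bddelta^{\times}Q^{2m+2}$ exactly as in \cite[Proposition 3.3]{Srivastava2025}, yields the term $C B\bddelta^{\times}(\bddelta^{\wedge})^{-2} Q^{2m+2 - 2m/(\beta-m)}\log^{2(\gamma+1)/m} Q$; combining the two regimes proves \eqref{w-bound}. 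The principal obstacle is the complex stationary-phase analysis in the second step — verifying that Poisson summation over $\Z[i]^m$, the complex Legendre transform, and the real Hessian computation interlock so that the phase is exactly linear in $q$ and the amplitudes meet the derivative hypotheses of Lemmas~\ref{lem: stationary phase} and~\ref{lem: summation by parts} — together with the precise tracking of constants (so that the leading coefficient in \eqref{w-bound} is exactly $A$) and of logarithmic factors in the final optimisation.
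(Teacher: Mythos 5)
Your route is the same as the paper's: Fej\'er expansion in the Gaussian frequency $d$, Poisson summation in $\bda \in \Z[i]^m \cong \Z^{2m}$, stationary phase in the $2m$ real variables with the critical value expressed through the complex Legendre transform and nondegeneracy supplied by the Cauchy--Riemann/Hessian identity of Lemma~\ref{lem: ND vs CND}, then summation by parts in $q$ (Lemma~\ref{lem: summation by parts} with $n=2$) and a dyadic splitting that produces the dual counts $\fkN^{\C}_{\cM^*,w^*}(\lambda,(2^{i_1}/Q,2^{i_2}/Q))$. The only structural difference is cosmetic: you keep the full stationary-phase amplitude via Lemma~\ref{lem: stationary phase} and absorb it into the symbol class (as the paper does in \S\ref{sec: int dim}), whereas the paper's \S\ref{sec: Guassian count} uses the leading term plus an explicit error $O((\bddelta^{\wedge})^{-(m-1)}Q^{m+1})$ which it compares separately; both work.

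There is, however, a genuine problem in your Step 4, in how the two regimes are set up. You split at the bare power threshold $\bddelta^{\wedge} = Q^{-m/(\beta-m)}$, invoke monotonicity only below it, and above it substitute \eqref{*bound} directly. But in the strip $Q^{-m/(\beta-m)} \leq \bddelta^{\wedge} \lesssim \delta_{\tII}$, where $\delta_{\tII}$ is the log-corrected balancing point defined by $\delta_{\tII}^{\beta-m}|\log\delta_{\tII}|^{-\gamma} \sim Q^{-m}\log Q$, direct substitution yields a term of size about $B\,\bddelta^{\times}(\bddelta^{\wedge})^{-(\beta-m)}|\log\bddelta^{\wedge}|^{\gamma}Q^{m+2}\log Q$, which at $\bddelta^{\wedge} = Q^{-m/(\beta-m)}$ is $\sim B\,\bddelta^{\times}Q^{2m+2}\log^{\gamma+1}Q$; for $m \geq 3$ and $\gamma$ large this exceeds the claimed second term of \eqref{w-bound} (whose log power is $2(\gamma+1)/m < \gamma+1$), and it also cannot be absorbed into the first term without destroying the exact constant $A$ that the iteration scheme requires (your absorption of the main-part contribution by ``$\leq A\bddelta^{\times}Q^{2m+2}$'' on top of the Fej\'er term $(A/3)\bddelta^{\times}Q^{2m+2}$ already gives a coefficient larger than $A$). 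The fix is exactly the paper's Step 5 (and the balancing you cite from \cite[Proposition 3.3]{Srivastava2025}): define the thresholds $\delta_{\tI},\delta_{\tII}$ where the main term $\tO$ balances each error term \emph{with the logarithmic factors included}, observe that above $\max\{\delta_{\tI},\delta_{\tII}\}$ the total is $\leq 3\tO = A\bddelta^{\times}Q^{2m+2}$, and use monotonicity of the counting function throughout the whole range $\bddelta^{\wedge} \leq \max\{\delta_{\tI},\delta_{\tII}\}$, converting $A\bdeta^{\times}Q^{2m+2}$ into the second term via $\bdeta^{\times} \leq \bddelta^{\times}(\bddelta^{\wedge})^{-2}\delta_{\tII}^{2}$; since $\delta_{\tII} \sim Q^{-m/(\beta-m)}\log^{(\gamma+1)/(\beta-m)}Q$ and $\beta - m > m$, this does deliver the log exponent $2(\gamma+1)/m$ you claim (the paper's own write-up is even slightly more generous, recording $2(\gamma+2)/m$). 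As written, though, your case analysis does not establish \eqref{w-bound} in the intermediate strip, so this step needs to be redone along the lines just described.
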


Here, similar to \S\ref{sec: int dim}, for $\bddelta = (\delta_{\fkR}, \delta_{\fkI}) \in (0,1/4)^2$, we write 
\begin{equation*}
    \bddelta^{\times} := \delta_{\fkR} \delta_{\fkI}, \qquad \bddelta^{\wedge} := \min\{\delta_{\fkR}, \delta_{\fkI}\} \qquad \textrm{and} \qquad \bddelta^{\vee} := \max\{\delta_{\fkR}, \delta_{\fkI}\}.
\end{equation*}
The above lemma can also be written in a dual form (see Lemma~\ref{lem: cplx self imp *} below), showing that a bound for $\fkN_{\cM, w}^{\C}$ implies a bound for $\fkN_{\cM^*, w^{*}}^{\C}$. Thus, once we have Lemma~\ref{lem: cplx self imp}, we can iteratively apply it and its dual form to arrive at Theorem~\ref{thm: main cplx}: see \S\ref{subsec: iter cplx} below. 

\begin{proof}[Proof (of Lemma~\ref{lem: cplx self imp})] Since the argument is somewhat lengthy, we break it into steps. \medskip

\noindent \textit{Step 1: Fourier reformulation}. Define $J_{\fkR} := \floor{\frac{1}{2\delta_{\fkR}}}$ and $J_{\fkI} := \floor{\frac{1}{2\delta_{\fkI}}}$, so that $J_{\fkR}^{-1}J_{\fkI}^{-1} \leq 16 \bddelta^{\times}$. Then, using the Fej\'er kernel as in \eqref{eq fejer char est}, together with the elementary identity
\begin{equation}\label{eq: cplx self imp 0}
  \fkR(\overline{j} z) = \fkR j \fkR z + \fkI j \fkI z \qquad \textrm{for all $j$, $z \in \C$,}
\end{equation}
 we have 
\begin{equation*}
  \fkN_{\cM, w}^{\C}(Q,\bddelta) \leq \bddelta^{\times} \sum_{\substack{j \in \Z[i] \\ |\fkR j| \leq J_{\fkR},\, |\fkI j| \leq J_{\fkI}}} a_{j, \bddelta} \sum_{\substack{q \in \Z[i]\\ 0 < |q|_{\infty}\le Q}} \ \sum_{\bda\in \Z[i]^m} w(\bda/q) e\big(\fkR(\overline{j}q f(\bda/q))\big),
\end{equation*}
where the real coefficients $a_{j, \bddelta}$ satisfy $a_{-j, \bddelta} = a_{j, \bddelta}$ and $0 \leq a_{j, \bddelta} \leq \pi^4$ for $j \in \Z[i]$. The right-hand side of the above inequality can be split into two terms
\begin{multline*}
   \pi^4\bddelta^{\times} \sum_{\substack{q \in \Z[i]\\ 0 < |q|_{\infty}\le Q}} \sum_{\bda\in \Z[i]^m} w(\bda/q) \\ 
   + \bddelta^{\times}  \sum_{\substack{j \in \Z[i] \setminus\{0\} \\ |\fkR j| \leq J_{\fkR} \\ |\fkI j| \leq J_{\fkI}}} a_{j, \bddelta} \sum_{\substack{q \in \Z[i]\\ 0 < |q|_{\infty}\le Q}} \ \sum_{\bda\in \Z[i]^m} w(\bda/q) e\big(\fkR(\overline{j}q f(\bda/q))\big). 
\end{multline*}
By the choice of $A \geq 1$ from \eqref{eq: choice of C0}, it follows that 
\begin{equation}\label{eq: cplx self imp 1}
    \fkN_{\cM, w}^{\C}(Q,\bddelta) \leq (A/3)\bddelta^{\times} Q^{2m+2} + O(\Sigma(Q, \bddelta))
\end{equation}
where
\begin{equation*}
  \Sigma(Q, \bddelta)  := \bddelta^{\times}\sum_{\substack{j \in \Z[i] \setminus\{0\} \\ |\fkR j| \leq J_{\fkR} \\ |\fkI j| \leq J_{\fkI}}} \Big| \sum_{\substack{q \in \Z[i]\\ 0 < |q|_{\infty}\le Q}} \ \sum_{\bda\in \Z[i]^m} w(\bda/q) e\big(\fkR(\overline{j}q f(\bda/q)\big)\Big|.
\end{equation*}
By the Poisson summation, together with a change of variables,\footnote{Here it is useful to pass to a function defined over $\R^{2m}$, writing the complex argument $\bdz = \bdx + i \bdy$ in real and imaginary parts. For $q = a + i b \neq 0$ with $a$, $b \in \Z$, the operation $\bdz \mapsto \bdz/q$ then corresponds to the matrix transformation
\begin{equation*}
\underbrace{[q] \otimes \cdots \otimes [q]}_{\textrm{$m$-fold}}  \qquad \textrm{where} \qquad [q] :=   \frac{1}{a^2 + b^2} \begin{pmatrix}
     a & b \\
     -b & a
    \end{pmatrix},    
\end{equation*}
with determinant $|q|^{2m}$, to which we apply the usual real change of variable formula.} we have 
\begin{equation*}
    \Sigma(Q, \bddelta)  = \bddelta^{\times} \sum_{\substack{j \in \Z[i] \setminus\{0\} \\ |\fkR j| \leq J_{\fkR} \\ |\fkI j| \leq J_{\fkI}}} \Big|
\sum_{\substack{q \in \Z[i]\\ 0 < |q|_{\infty}\le Q}} |q|^{2m} \sum_{\bdk\in \Z[i]^m}
\cI(j,q, \bdk)\Big|
\end{equation*}
where $|q|$ is the usual complex modulus and
\begin{equation*}
\cI(j,q, \bdk) := \int_{\C^m} w(\bdw) \, e\big(-\fkR( \overline{j} q \phi(\bdw; \overline{\bdk}/\overline{j}\,))\big) \ud \bdw \quad \textrm{and} \quad    \phi(\bdw; \bdz) :=  \bdw  \cdot \bdz - f(\bdw).
\end{equation*}
Here $\bdw \cdot \bdz$ is as defined in \eqref{eq: quad form} and we have again used \eqref{eq: cplx self imp 0}.\medskip

\noindent \textit{Step 2: Stationary phase}. We evaluate the $\cI(j,q, \bdk)$ using the standard asymptotic expansion for oscillatory integrals with nondegenerate phase (see, for instance, \cite[p.220, Theorem 7.7.5]{Hormander_book}). To do this, we consider two cases, defined in terms of the set $\cD^*$ from \eqref{eq: cplx ift 2}.\medskip

\noindent \underline{Case 1.} Non-stationary: $\overline{\bdk}/\overline{j} \notin \cD^*$.\medskip

\noindent In this case, it follows from the chain of inclusions in \eqref{eq: cplx ift 1} that $\overline{\bdk}/\overline{j} \notin B(\partial_{\bdz} f(\bdz_0), 2C_1 \tau^2)$ and so $|\overline{\bdk}/\overline{j} -\partial_{\bdz} f(\bdz_0)| \geq 2C_1 \tau^2$. On the other hand, for $\bdw \in \supp w \subseteq B(\bdz_0, \tau^2)$, it follows from the first inclusion in \eqref{eq: cplx ift 1} that $|\partial_{\bdz} f(\bdw) - \partial_{\bdz} f(\bdz_0)| < C_1 \tau^2$. Combining these observations with the triangle inequality,
\begin{equation}\label{eq: cplx self imp 1.1}
    |\partial_{\bdw} \phi(\bdw; \overline{\bdk}/\overline{j}\,)| \geq |\overline{\bdk}/\overline{j} -\partial_{\bdz} f(\bdz_0)| -  |\partial_{\bdz} f(\bdw) - \partial_{\bdz} f(\bdz_0)| \geq C_1 \tau^2 \gg 1. 
\end{equation}
We now write $\phi$ in real and imaginary parts, so that 
\begin{equation*}
   \phi(\bdx + i \bdy; \overline{\bdk}/\overline{j}\,) = u_{\phi}(\bdx, \bdy; \overline{\bdk}/\overline{j}\,) + i v_{\phi}(\bdx, \bdy; \overline{\bdk}/\overline{j}\,)
\end{equation*}
where $u_{\phi}$ and $v_{\phi}$ are real-valued. Using \eqref{eq: cplx self imp 0}, our phase is then given by
\begin{equation}\label{eq: cplx self imp 1.2}
   \fkR( \overline{j} q \phi(\bdx + i \bdy; \overline{\bdk}/\overline{j}\,)) =  \fkR(j \overline{q}) u_{\phi}(\bdx, \bdy; \overline{\bdk}/\overline{j}\,) + \fkI(j \overline{q}) v_{\phi}(\bdx, \bdy; \overline{\bdk}/\overline{j}\,).
\end{equation}
Using the Cauchy--Riemann equations, the $(\bdx, \bdy)$-gradient of the phase function satisfies
\begin{equation}\label{eq: cplx self imp 1.3}
\nabla_{(\bdx, \bdy)} \fkR\Big( \frac{\overline{j} q}{|q||j|} \phi(\bdx + i \bdy; \overline{\bdk}/\overline{j}\,)\Big) 
=
\frac{1}{|q||j|} 
\begin{pmatrix}
    \fkR( \overline{j} q) I_m & \fkI( \overline{j} q)I_m \\
    \fkI( \overline{j} q) I_m & -\fkR( \overline{j} q)I_m
\end{pmatrix}
\begin{pmatrix}
    \nabla_{\bdx} u_{\phi}(\bdx, \bdy; \overline{\bdk}/\overline{j}\,) \\
    \nabla_{\bdx} v_{\phi}(\bdx, \bdy; \overline{\bdk}/\overline{j}\,)
\end{pmatrix}.
\end{equation}
The matrix appearing on the right-hand side of the above display belongs to $\mathrm{SO}(2m)$. Hence, combining \eqref{eq: cplx self imp 1.1} and \eqref{eq: cplx self imp 1.3}, we deduce that
\begin{equation*}
    |\nabla_{(\bdx, \bdy)} \fkR( \overline{j} q \phi(\bdx + i \bdy; \overline{\bdk}/\overline{j}\,))| \gg |q||j| \qquad \textrm{for all $\bdx$, $\bdy \in \R^m$ with $\bdx + i \bdy \in \supp w$.}
\end{equation*}
On the other hand, if $|\bdk| \geq C|j|$ for a suitably large constant $C$, then it is clear that 
\begin{equation*}
    |\nabla_{(\bdx, \bdy)} \fkR( \overline{j} q \phi(\bdx + i \bdy; \overline{\bdk}/\overline{j}\,))| \gg |\bdk|.
\end{equation*}
Thus, by repeated integration-by-parts,
\begin{equation*}
    |\cI(j,q, \bdk)| \ll_{M, N} |q|^{-N}|j|^{-N}|\bdk/j|^{-M} \qquad \textrm{for all $M$, $N \in \N_0$}
\end{equation*}
and the contribution in this case is essentially negligible. \medskip 

\noindent \underline{Case 2.} Stationary: $\overline{\bdk}/\overline{j} \in \cD^*$.\medskip

Since $\supp w \subseteq B(\bdz_0, \tau^2) \subseteq \cD$ (where the latter inclusion holds by \eqref{eq: cplx ift 1}), it follows that there exists a unique critical point $\bdz_0 \in \supp w$ such that $\nabla_{\bdy}\phi(\bdz_0, \overline{\bdk}/\overline{j}) = 0$. In particular, $\bdz_0 = (\partial_{\bdz}f)^{-1}(\overline{\bdk}/\overline{j})$ and, by the definition of the Legendre dual, 
\begin{equation*}
    \phi(\bdz_0, \overline{\bdk}/\overline{j}) = f^*(\overline{\bdk}/\overline{j}). 
\end{equation*}

The condition ($\C$-CC) implies that the critical point $\bdz_0$ is nondegenerate over $\C$. Moreover, we can use Lemma~\ref{lem: ND vs CND} to translate this into a nondegeneracy condition over $\R$. Indeed, taking the hessian of both sides of \eqref{eq: cplx self imp 1.2}, we deduce that 
\begin{equation*}
    \Big|\det \partial_{(\bdx, \bdy), (\bdx, \bdy)}^2 \fkR\Big( \frac{\overline{j} q}{|q||j|} \phi(\bdx + i \bdy; \overline{\bdk}/\overline{j}\,)\Big)\Big| 
=
\big|\det
\begin{pmatrix}
    \theta_1 u''(\bdx, \bdy) + \theta_2 v''(\bdx, \bdy)
\end{pmatrix}\big|  
\end{equation*}
where $u$ and $v$ denote the real and imaginary parts of $f$ and $(\theta_1, \theta_2) \in S^1$ is a unit vector parallel to $(\fkR(j \overline{q}), \fkI(j \overline{q}))$. However, By Lemma~\ref{lem: ND vs CND} and the hypothesis that $f$ satisfies ($\C$-CC), the right-hand determinant is non-vanishing over its entire domain. 

In light of the above, we may appeal to stationary phase asymptotics for oscillatory integrals with nondegenerate phase (see, for instance, \cite[p.220, Theorem 7.7.5]{Hormander_book}), we obtain
\begin{equation}\label{eq: cplx self imp 1.5}
    \Sigma(Q, \bddelta) = \Sigma_0(Q, \bddelta) + E(Q, \bddelta) \quad \textrm{where} \quad E(Q, \bddelta) = O\big((\bddelta^{\wedge})^{-(m-1)} Q^{m+1}\big),
\end{equation}
where
\begin{equation}\label{eq: cplx self imp 2}
 \Sigma_0(Q, \bddelta) := \bddelta^{\times}  \sum_{\substack{j \in \Z[i] \setminus\{0\} \\ |\fkR j| \leq J_{\fkR} \\ |\fkI j| \leq J_{\fkI}}} \sum_{\substack{q \in \Z[i] \\0 < |q|_{\infty}\le Q}} |q|^{2m} \sum_{\substack{\bdk \in \Z[i]^m \\ \overline{\bdk}/\overline{j} \in \cD^*}} \frac{w^{*}(\overline{\bdk}/\,\overline{j}\,)}{|\overline{j} q|^m} e\big(-\fkR(\overline{j}q f^{*}(\overline{\bdk}/\,\overline{j}\,))\big).   
\end{equation}
The next step is to relate the exponential sums appearing in \eqref{eq: cplx self imp 2} to the dual counting function appearing in our hypothesis \eqref{*bound}.\medskip

\noindent \textit{Step 3: Passing to the dual counting problem}. We estimate the $q$ sum in \eqref{eq: cplx self imp 2} using summation-by-parts as in Lemma~\ref{lem: summation by parts}. This gives
\begin{equation}\label{eq: cplx self imp 3}
    \Bigl| \sum_{\substack{q \in \Z[i] \\0 < |q|_{\infty}\le Q}} |q|^m  e\big(-\fkR (\overline{j} qf^{*}(\overline{\bdk}/\,\overline{j}\,))\big) \Bigr|  \ll  Q^m \prod_{\fkX \in \{\fkR, \fkI\}} \min\Big\{Q, \frac{1}{\|\fkX \overline{j}f^*(\overline{\bdk}/ \overline{j}\,)\|}\Big\}.
\end{equation}
Combining \eqref{eq: cplx self imp 2} and \eqref{eq: cplx self imp 3}, we have
\begin{align}\label{eq: cplx self imp 4}
 \Sigma_0(Q, \bddelta) \ll \bddelta^{\times} Q^m  \sum_{\substack{j \in \Z[i] \setminus\{0\} \\ |\fkR j| \leq J_{\fkR} \\ |\fkI j| \leq J_{\fkI}}} |j|_{\infty}^{-m} \sum_{\substack{\bdk \in \Z[i]^m \\ \overline{\bdk}/\overline{j} \in \cD^*}} w^*(\overline{\bdk}/\overline{j}\,) \prod_{\fkX \in \{\fkR, \fkI\}} \min \Big\{Q, \frac{1}{\|\fkX \overline{j} f^*(\overline{\bdk}/ \overline{j}\,)\|}\Big\}.
\end{align}
We split the sum in \eqref{eq: cplx self imp 4} according to the relative sizes of $\|\fkR qf^*(\overline{\bdk}/ \overline{j}\,)\|$, $\|\fkI qf^*(\overline{\bdk}/ \overline{j}\,)\|$ and $Q$. Setting $ Q_* := \floor{\frac{1}{2\bddelta^{\wedge}}}$, this leads to the bound
\begin{equation}\label{eq: cplx self imp 5}
  \Sigma_0(Q, \bddelta) \ll \bddelta^{\times} Q^{m+2} \sum_{\substack{\bdi = (i_{\fkR}, i_{\fkI}) \in \N_0^2 \\ 1 \leq 2^{i_{\fkR}}, 2^{i_{\fkI}} \leq Q}} 2^{-i_{\fkR} - i_{\fkI}}  \Big[\mathop{\sum_{\substack{j \in \Z[i] \\ 0 < |j|_{\infty} \le Q_*}} |j|_{\infty}^{-m}\sum_{\substack{\bdk \in \Z[i]^m \\ \overline{\bdk}/\overline{j} \in \cD^*}}}_{\substack{\|\fkR \overline{j} f^*(\overline{\bdk}/\overline{j}\,)\|\le 2^{i_{\fkR}} Q^{-1} \\ \|\fkI \overline{j} f^*(\overline{\bdk}/\overline{j}\,)\|\le 2^{i_{\fkI}} Q^{-1}}} w^{*}(\overline{\bdk}/\,\overline{j}\,)\Big],  
\end{equation}
where we have relaxed the range of summation in $j$ in order to later apply \eqref{*bound}. Applying a dyadic decomposition and the definition of the dual counting function from \eqref{eq: dual count cplx}, the bracketed expression on the right-hand side of \eqref{eq: cplx self imp 5} may be bounded by
\begin{equation}\label{eq: cplx self imp 6}
\sigma_{\bdi}(Q; \bddelta) := \sum_{\substack{1 \leq \lambda \leq Q_* \\ \mathrm{dyadic}}} \lambda^{-m} \cdot \fkN_{\cM^*, w^*}^{\C}(\lambda, \bddelta_{*, \bdi}),
\end{equation}
where $\bddelta_{*, \bdi} = (2^{i_{\fkR}}Q^{-1}, 2^{i_{\fkI}}Q^{-1})$ for $\bdi = (i_{\fkR}, i_{\fkI}) \in \N_0^2$ satisfying $1 \leq 2^{i_{\fkR}}, 2^{i_{\fkI}} \leq Q$.\medskip

\noindent \textit{Step 4: Applying the hypothesised bounds}. Observe that
\begin{equation*}
    \bddelta_{*, \bdi}^{\times} = 2^{i_{\fkR} + i_{\fkI}} Q^{-2} \qquad \textrm{and} \qquad \bddelta_{*, \bdi}^{\times}(\bddelta_{*, \bdi}^{\wedge})^{-2} = 2^{i_{\fkR}+ i_{\fkI} - 2\min\{i_{\fkR}, i_{\fkI}\}}.
\end{equation*}
Recalling $Q_* \sim (\bddelta^{\wedge})^{-1}$, we now apply the hypothesised bound \eqref{*bound} to estimate each term in \eqref{eq: cplx self imp 6}, giving 
\begin{equation*}
   \sigma_{\bdi}(Q; \bddelta) \ll  B2^{i_{\fkR} + i_{\fkI}}\big(  Q^{-2} (\bddelta^{\wedge})^{-m - 2} +2^{-2 \min\{i_{\fkR}, i_{\fkI}\}} (\bddelta^{\wedge})^{
-(\beta-m)} |\log \bddelta^{\wedge}|^{\gamma}\big). 
\end{equation*}
Here we have used the hypothesis $\beta > 2m > m$. Applying the above bound to the right-hand side of \eqref{eq: cplx self imp 5} and summing over $\bdi = (i_{\fkR}, i_{\fkI}) \in \N_0^2$ satisfying $1 \leq 2^{i_{\fkR}}, 2^{i_{\fkI}} \leq Q$, we deduce that 
\begin{equation}\label{eq: cplx self imp 7}
    \Sigma_0(Q, \bddelta) \ll \bddelta^{\times}(\bddelta^{\wedge})^{-m - 2} Q^m \log^2 Q + \bddelta^{\times}(\bddelta^{\wedge})^{-(\beta-m)} |\log \bddelta^{\wedge}|^{\gamma} Q^{m+2} \log Q.
\end{equation}

We compare the size of the right-hand side of \eqref{eq: cplx self imp 7} to that of the error term $E(Q, \bddelta) = O\big((\bddelta^{\wedge})^{-(m-1)} Q^{m+1}\big)$ from \eqref{eq: cplx self imp 1.5}.
\begin{itemize}
    \item If $(\bddelta^{\wedge}) Q \leq 1$, then $E(Q, \bddelta)$ is controlled by the first term on the right-hand side of \eqref{eq: cplx self imp 7}; 
    \item If $(\bddelta^{\wedge}) Q \geq 1$, then $E(Q, \bddelta)$ is controlled by the second term on the right-hand side of \eqref{eq: cplx self imp 7}.
\end{itemize}
In the second case, we again used the hypothesis $\beta > 2m > m$. 
Combining the above observations with \eqref{eq: cplx self imp 1}, \eqref{eq: cplx self imp 1.5} and \eqref{eq: cplx self imp 7}, we therefore obtain 
\begin{align}\label{eq: cplx self imp 7.5}
   & \fkN_{\cM, w}^{\C}(Q,\bddelta)  \leq \\
   \nonumber
   & \underbrace{(A/3)\bddelta^{\times} Q^{2m+2}}_{\tO \,:=} + \underbrace{CB\bddelta^{\times}(\bddelta^{\wedge})^{-m - 2} Q^m \log^2 Q}_{\tI \,:=} + \underbrace{CB\bddelta^{\times}(\bddelta^{\wedge})^{-(\beta-m)} |\log \bddelta^{\wedge}|^{\gamma} Q^{m+2} \log Q}_{\tII\,:=}.
\end{align}
It remains to compare the terms appearing in the above bound in order to arrive at our final estimate.\medskip

\noindent \textit{Step 5: Comparing terms}.  For $c_0 :=(3CB)/A$, with $C \geq 1$ as in \eqref{eq: cplx self imp 7.5}, we make the following observations:
\begin{equation*}
 \tO \le \tI \quad \Longleftrightarrow \quad \bddelta^{\wedge} \le \delta_{\tI}  := c_0^{1/(m+2)} Q^{- 1} \, \log^{2/(m+2)} Q, 
\end{equation*}
and
\begin{equation*}
    \tO \le \ \tII  \quad \Longleftrightarrow \quad \bddelta^{\wedge} \le \delta_{\tII} \qquad \textrm{where} \qquad
\delta_{\tII}^{\beta - m}|\log \delta_{\tII}|^{-\gamma} = c_0 Q^{-m}  \log Q.
\end{equation*}
The latter condition implies that $|\log \delta_{\tII}| \sim \log Q$. Hence
\begin{equation}\label{eq: cplx self imp 8}
  \delta_{\tII} \ll (B/A)^{1/2} Q^{-m/(\beta - m)} \log^{(\gamma+2)/m}Q, 
\end{equation}
where we have bounded $(B/A)^{1/(\beta-m)} \leq (B/A)^{1/2}$ and $(\gamma + 1)/(\beta - m) \leq (\gamma + 2)/m$, using $\beta \geq 2m \geq 2$, $\gamma \geq 0$ and $B \geq A$. Furthermore, we see that $\delta_{\tI} \ll \delta_{\tII}$ since $2m < \beta \leq 2m + 2$.\medskip

If $\bddelta^{\wedge} \geq \max\{\delta_{\tI}, \delta_{\tII}\}$, then the above observations imply $\tO \geq \max \{\tI, \tII\}$ and so
\begin{equation}\label{eq: cplx self imp 9}
   \fkN_{\cM, w}^{\C}(Q,\bddelta)  \leq A\bddelta^{\times} Q^{2m+2},
\end{equation}
which is a favourable estimate. On the other hand, if $\bddelta^{\wedge} \leq \max\{\delta_{\tI}, \delta_{\tII}\}\leq \bddelta^{\vee}$, then we can apply the monotonicity of the counting function to deduce that 
\begin{equation*}
\fkN_{\cM, w}^{\C}(Q,\bddelta) \leq \fkN_{\cM, w}^{\C}(Q,\bdeta) \quad \textrm{where} \quad \bdeta^{\wedge} = \max\{\delta_{\tI}, \delta_{\tII}\} \quad \textrm{and}  \quad \bdeta^{\vee} = \bddelta^{\vee}.
\end{equation*}
Observe that
\begin{align*}
    \bdeta^{\times} &= \bddelta^{\times} (\bddelta^{\wedge})^{-2} \bddelta^{\wedge}  \max\{\delta_{\tI}, \delta_{\tII}\} \\ 
    &\leq \bddelta^{\times} (\bddelta^{\wedge})^{-2} \max\{\delta_{\tI}, \delta_{\tII}\}^2 \\
    &\ll (B/A) \bddelta^{\times} (\bddelta^{\wedge})^{-2} Q^{-2m/(\beta - m)} \log^{2(\gamma+2)/m} Q,
\end{align*}
where the last step follows from the fact that $\max\{\delta_{\tI}, \delta_{\tII}\} \ll \delta_{\tII}$ and \eqref{eq: cplx self imp 8}. Since $\bdeta^{\wedge} = \max\{\delta_{\tI}, \delta_{\tII}\}$, the estimate \eqref{eq: cplx self imp 9} from the previous case applies to $\fkN_{\cM, w}^{\C}(Q,\bdeta)$, and we therefore deduce that
\begin{equation*}
 \fkN_{\cM, w}^{\C}(Q,\bddelta)  \leq A \bdeta^{\times} Q^{2m+2}  \ll B\bddelta^{\times} (\bddelta^{\wedge})^{-2}  Q^{2m+2-2m/(\beta - m)} \log^{2(\gamma+2)/m} Q.
\end{equation*}
Finally, if $\bddelta^{\wedge} \leq \bddelta^{\vee} \leq \max\{\delta_{\tI}, \delta_{\tII}\}$, then a minor adaption of the above argument leads to the same conclusion, this time taking $\bdeta^{\wedge} = \bdeta^{\vee} = \max\{\delta_{\tI}, \delta_{\tII}\}$. Thus, in all cases we obtain a desirable bound. 
\end{proof}

%%%%%%%%%%%%%%%%%%%%%%%%%%%%%%%%%%%%%%%%%%%%%%%%%%%%%%%%%%%%%%%%%%%%%%%%%%%%%%%%%%%%%%%%%%%%%%%%

%  Iteration scheme

%%%%%%%%%%%%%%%%%%%%%%%%%%%%%%%%%%%%%%%%%%%%%%%%%%%%%%%%%%%%%%%%%%%%%%%%%%%%%%%%%%%%%%%%%%%%%%%%

\subsection{Iteration scheme}\label{subsec: iter cplx} Here we show that repeated application of the self-improving mechanism from Lemma~\ref{lem: cplx self imp} leads to Theorem~\ref{thm: main cplx}. It is important to note that, in order to establish the bound \eqref{w-bound} even for the case $\delta_{\fkR} = \delta_{\fkI}$, we need the bound \eqref{*bound} for all non-isotropic $\bddelta = (\delta_{\fkR}, \delta_{\fkI}) \in (0,1/4)^2$.

As discussed at the beginning of \S\ref{subsec: cplx self improve}, the self improving mechanism from Lemma~\ref{lem: cplx self imp} has an equivalent dual formulation.

\begin{lemmabis}{lem: cplx self imp}\label{lem: cplx self imp *}
Suppose for all for all $\bddelta \in (0, 1/4)^2$ the bound
\begin{equation}\label{w-bound-gamma}
\fkN_{\cM, w}^{\C}(Q,\bddelta) \le  A \bddelta^{\times} Q^{2m+2} + B \bddelta^{\times} (\bddelta^{\wedge})^{-2} Q^{\beta} \log^{\gamma} Q 
\end{equation}
holds for some $2m < \beta \leq 2m+2$, $\gamma \geq 0$ and $B \geq A \geq 1$. Then, for all $\bddelta_* \in (0, 1/4)^2$, the bound
the bound
\begin{equation*}
\fkN_{\cM^*, w^{*}}^{\C}(Q_*, \bddelta_*)  \leq A \bddelta_*^{\times}Q_*^{2m+2} +  CB  \bddelta_*^{\times} (\bddelta_*^{\wedge})^{-2} Q_*^{2m + 2 - \frac{2m}{\beta - m}} \log^{\frac{2(\gamma+2)}{m}} Q_* 
\end{equation*}
holds. Here the constant $C \geq 1$ may depend on $\cM$ but is independent of the other parameters in the problem, including $B$. 
\end{lemmabis}

One may derive Lemma~\ref{lem: cplx self imp *} as an immediate consequence of Lemma~\ref{lem: cplx self imp} using the duality between the counting functions $\fkN_{\cM, w}^{\C}(Q,\bddelta)$ and $\fkN_{\cM^*, w^{*}}^{\C}(Q_*, \bddelta_*) $, as discussed in \S\S\ref{subsec: cplx prelim}-\ref{subsec: cplx self improve}. With this in hand, we may turn to the proof of Theorem~\ref{thm: main cplx}.

\begin{proof}[Proof (of Theorem~\ref{thm: main cplx})] Provided $B \geq A \geq 1$ are suitably chosen, either bound \eqref{*bound} or \eqref{w-bound-gamma} trivially holds for $\beta = 2m+2$ and $\gamma = 0$. We may then alternatively apply Lemma~\ref{lem: cplx self imp} and Lemma~\ref{lem: cplx self imp *} to deduce better and better bounds for $\fkN_{\cM, w}^{\C}(Q,\bddelta)$ and $\fkN_{\cM^*, w^{*}}^{\C}(Q_*, \bddelta_*) $. In particular, if we recursively define
\begin{align*}
    \beta_0 &:= 2m+2,& \qquad \qquad \gamma_0 &:= 0, & \\
    \beta_{j+1} &:= 2m + 2 - \frac{2m}{\beta_j - m}, & \qquad \qquad \gamma_{j+1} &:= \frac{2(\gamma_j+2)}{m} & \textrm{for $j \in \N_0$,}
\end{align*}
then a simple inductive argument shows that $2m < \beta_{j+1} < \beta_j \leq 2m+2$ and $\gamma_j \geq 0$ for all $j \in \N_0$. Thus, from the argument described above, we conclude that there exists some constant $C \geq 1$ such that 
\begin{equation}\label{eq: cplx pf 1}
\fkN_{\cM, w}^{\C}(Q,\bddelta) \le  A \bddelta^{\times} Q^{2m+2} + C^j \bddelta^{\times} (\bddelta^{\wedge})^{-2} Q^{\beta_j} \log^{\gamma_j} Q \qquad \textrm{holds for all $j \in \N_0$.} 
\end{equation}
The analysis now splits into three cases depending on the dimension.\medskip

\noindent\underline{Case: $m \geq 3$}. For $j \in \N$, an inductive argument shows $0 \leq \gamma_j \leq 4$. On the other hand,
\begin{equation*}
    \beta_j - 2m = \frac{2(\beta_{j-1} - 2m)}{\beta_{j-1} - m} \qquad \textrm{and so} \qquad |\beta_{j+1} - 2m| \leq (2/m) |\beta_{j-1} - 2m|,
\end{equation*}
since $\beta_{j-1} \geq 2$. Repeatedly applying this inequality, 
\begin{equation*}
   |\beta_j - 2m| \leq  (2/m)^j |\beta_0 - 2m| = 2(2/m)^j \qquad \textrm{for all $j \in \N_0$.}
\end{equation*}
To optimise our estimate, we choose $j \sim \log\log Q$ in \eqref{eq: cplx pf 1} and thereby obtain
\begin{equation*}
    \fkN_{\cM, w}^{\C}(Q,\bddelta) \ll \delta^2 Q^{2m+2} + Q^{2m} \log^{\kappa_m} Q
\end{equation*}
for some dimensional constant $\kappa_m \geq 1$.\medskip

\noindent\underline{Case: $m = 2$}. In this case, one can show by induction that the terms of the sequences $(\beta)_{j \in \N_0}$ and $(\gamma_j)_{j \in \N_0}$ are given by 
\begin{equation*}
    \beta_j = 4 + \frac{2}{j+1} \qquad \textrm{and} \qquad \gamma_j := 2j \qquad \textrm{for all $j \in \N_0$.}
\end{equation*}
Thus, the sequence is converging to the target value $4$, but at a much slower rate when compared with the $m \geq 3$ case. To optimise our estimate, we choose $j \sim \sqrt{\log Q}/\sqrt{\log\log Q}$ in \eqref{eq: cplx pf 1} and thereby obtain 
\begin{equation*}
    \fkN_{\cM, w}^{\C}(Q,\delta) \ll \delta^2 Q^6 + Q^4 e^{\kappa_2 \sqrt{\log Q \log\log Q}}.
\end{equation*}
for some absolute constant $\kappa_2 \geq 1$. This is analogous to the case of $2$-dimensional real hypersurfaces in $\R^3$ in the work of Huang~\cite{Huang2020}. \medskip

\noindent\underline{Case: $m = 1$}. In this case, $\beta_0 = 4$, $\beta_1 = 3$, $\gamma_0 = 0$ and $\gamma_1 = 2$. However, $3$ is a fixed point of the recursion relation defining $(\beta_j)_{j \in \N_0}$ and so
\begin{equation*}
    \fkN_{\cM, w}^{\C}(Q,\delta) \ll \delta^2 Q^4 + Q^3 \log^4 Q,
\end{equation*}
corresponding to $j = 1$, is the best bound attainable using the above method. This is analogous to case of $1$-dimensional real hypersurfaces in $\R^2$ (that is, plane curves) in the work of Huang~\cite{Huang2020}. 
\end{proof}

\begin{remark} Modifying the above argument easily yields an improved $\log$ power in the $m = 1$ case. However, for presentational convenience, we chose not to optimise here, since in any case the $Q^3$ term is likely suboptimal.     
\end{remark}

%%%%%%%%%%%%%%%%%%%%%%%%%%%%%%%%%%%%%%%%%%%%%%%%%%%%%%%%%%%%%%%%%%%%%%%%%%%%%%%%%%%%%%%%%%%%%%%%

%                                          Appendix

%%%%%%%%%%%%%%%%%%%%%%%%%%%%%%%%%%%%%%%%%%%%%%%%%%%%%%%%%%%%%%%%%%%%%%%%%%%%%%%%%%%%%%%%%%%%%%%%

\appendix

\section{Summation-by-parts}

Here, for completeness, we provide the simple and standard proof of Lemma~\ref{lem: summation by parts}.

\begin{proof}[Proof (of Lemma~\ref{lem: summation by parts})] We induct on $n \in \N_0$. It is convenient to take the base case to be $n = 0$, in which case $\fka$ is interpreted as a constant and the condition $\fka \in \fkA(\lambda;\bdJ; B)$ is interpreted as $|\fka| \leq B$. Furthermore, the exponential sum $S_{\fka}(x, \bdJ)$ is interpreted as equal to the constant $\fka$ and so the desired inequality is just a restatement of the hypothesis.\medskip

Now suppose the result holds for some $n \in \N_0$. 
Let $\lambda \geq 1$ and $\bdJ \in \fkJ^{n+1}(\lambda)$ be given by $\bdJ = \bdI \times J$ where $\bdI \in \fkJ^n(\lambda)$ and $J := [M, N]$ for $M$, $N \in \N_0$ with $0 \leq M < N \leq \lambda$. Fix $B > 0$ and for $\fka \in \fkA(\lambda; \bdJ; B)$ and $\bdz = (\bdx, y) \in \R^n \times \R$ consider the sum 
\begin{equation*}
    S_{\fka}(\bdz, \bdJ) = \sum_{\ell = M}^N S_{\fka(\,\cdot\,, \ell)}(\bdx, \bdI)  e(\ell y)
\end{equation*}
where
\begin{equation*}
    S_{\fka(\,\cdot\,, \ell)}(\bdx, \bdI) = \sum_{\bdj \in \bdI \cap \N_0^n} \fka(\bdj, \ell) e(\bdj \cdot \bdx).
\end{equation*}
For $k \in \N$, we define $S(y, k) := \sum_{j = 1}^k e(j y)$. By the familiar summation-by-parts formula,
\begin{equation}\label{eq: mulit w lin est 1}
   |S_{\fka}(\bdz, \bdJ)| \leq  |S_{\fka(\,\cdot\,, N)}(\bdx, \bdI)| |S(y, N)|  + \sum_{k = M}^{N-1}  | S_{(\Delta\fka)(\,\cdot\,, k)}(\bdx, \bdI)| |S(y, k)|,
\end{equation}
where here $(\Delta\fka)(\,\cdot\,, k) := \fka(\,\cdot\,, k+1) - \fka(\,\cdot\,, k)$.

We now apply the induction hypothesis to bound
\begin{equation}\label{eq: mulit w lin est 2}
  |S_{\fka(\,\cdot\,, N)}(\bdx, \bdI)| \leq B_0(N)  \prod_{\ell = 1}^n \min\Big\{\lambda, \frac{1}{\|x_{\ell}\|}\Big\}
\end{equation}
where
\begin{equation}\label{eq: mulit w lin est 3}
B_0(N) := \sum_{\alpha \in \{0,1\}^n} \lambda^{|\alpha|} \sup_{\bdu \in \bdI} |\partial_{\bdu}^{\alpha}\fka(\bdu, N)|  \leq  \sum_{\substack{\alpha \in \{0,1\}^{n+1} \\ \alpha_{n+1} = 0}} \lambda^{|\alpha|} \sup_{\bdu \in \bdJ} |\partial_{\bdu}^{\alpha}\fka(\bdu)|.
\end{equation}
Similarly, for $M \leq k \leq N-1$, the induction hypothesis also implies 
\begin{equation}\label{eq: mulit w lin est 4}
    |S_{(\Delta\fka)(\,\cdot\,, k)}(\bdx, \bdI)|\leq B_1(k)  \prod_{\ell = 1}^n \min\Big\{\lambda, \frac{1}{\|x_{\ell}\|}\Big\}
\end{equation}
where, by the mean value theorem,
\begin{equation}\label{eq: mulit w lin est 5}
B_1(k) := \sum_{\alpha \in \{0,1\}^n} \lambda^{|\alpha|} \sup_{\bdu \in \bdI} |\partial_{\bdu}^{\alpha}\Delta\fka(\bdu, k)|  \leq  \lambda^{-1}\sum_{\substack{\alpha \in \{0,1\}^{n+1} \\ \alpha_{n+1} = 1}} \lambda^{|\alpha|} \sup_{\bdu \in \bdJ} |\partial_{\bdu}^{\alpha}\fka(\bdu)|.
\end{equation}
Applying the inequalities \eqref{eq: mulit w lin est 2} and \eqref{eq: mulit w lin est 4} to the right-hand side of \eqref{eq: mulit w lin est 1} and recalling that $S(y, k) := \sum_{j = 1}^k e(j y)$ satisfies
\begin{equation*}
    |S(y, k)| \leq \min\Big\{k, \frac{1}{\|y\|}\Big\},
\end{equation*}
we deduce that
\begin{equation}\label{eq: mulit w lin est 6}
    |S_{\fka}(\bdz, \bdJ)| \leq  \Big(B_0(N)  + \sum_{k = M}^{N-1}  B_1(k) \Big) \cdot \prod_{\ell = 1}^n \min\Big\{\lambda, \frac{1}{\|x_{\ell}\|}\Big\} \cdot \min\Big\{\lambda, \frac{1}{\|y\|}\Big\}.
\end{equation}
Finally, applying \eqref{eq: mulit w lin est 3} and \eqref{eq: mulit w lin est 5} to estimate the above display and using the hypothesis $\fka \in \fkA(\lambda; \bdJ;B)$, we conclude that
\begin{equation*}
    |S_{\fka}(\bdz, \bdJ)| \leq  B \prod_{\ell = 1}^{n+1} \min\Big\{\lambda, \frac{1}{\|z_{\ell}\|}\Big\}.
\end{equation*}
Here, the number of terms in the sum in $k$ in \eqref{eq: mulit w lin est 6} is $N - M \leq \lambda$, which cancels with the $\lambda^{-1}$ factor from \eqref{eq: mulit w lin est 5}. This closes the induction and completes the proof.
\end{proof}

%%%%%%%%%%%%%%%%%%%%%%%%%%%%%%%%%%%%%%%%%%%%%%%%%%%%%%%%%%%%%%%%%%%%%%%%%%%%%%%%%%%%%%%%%%%%%%%%

%                                          REFERENCES

%%%%%%%%%%%%%%%%%%%%%%%%%%%%%%%%%%%%%%%%%%%%%%%%%%%%%%%%%%%%%%%%%%%%%%%%%%%%%%%%%%%%%%%%%%%%%%%%
%\bibliography{ref}
\bibliography{Reference}
\bibliographystyle{amsplain}

\end{document}